\documentclass[11pt,a4paper,twoside]{article}
\usepackage{enumerate}
\usepackage{amsmath}
\usepackage{fix-cm}
\usepackage{pb-diagram}
\usepackage[english]{babel}
\usepackage{amssymb,latexsym}
\usepackage{url}
\usepackage{atbegshi}
\usepackage{enumitem}  
\usepackage{tikz-cd}
\usepackage[dvipsnames]{xcolor}
\usepackage{authblk}
\newcommand{\thref}[1]{\hyperref[#1]{Theorem~\ref*{#1}}}
\newcommand{\corref}[1]{\hyperref[#1]{Corollary~\ref*{#1}}}
\newcommand{\propref}[1]{\hyperref[#1]{Proposition~\ref*{#1}}}
\newcommand{\secref}[1]{\hyperref[#1]{Section~\ref*{#1}}}
\newcommand{\lemref}[1]{\hyperref[#1]{Lemma~\ref*{#1}}}
\newcommand{\rkref}[1]{\hyperref[#1]{Remark~\ref*{#1}}}
\newcommand{\defref}[1]{\hyperref[#1]{Definition~\ref*{#1}}}
\newcommand{\notref}[1]{\hyperref[#1]{Notation~\ref*{#1}}}
\newcommand{\Conref}[1]{\hyperref[#1]{Conjecture~\ref*{#1}}}
\usepackage{amsthm}
\usepackage[pdftex, bookmarks, colorlinks=true, pdfpagemode=UseOutlines,
        linkcolor=blue,
        urlcolor=blue,
        ]{hyperref}
\usepackage{hyperref}

\usepackage{comment}

\DeclareMathOperator{\dive}{div}

\DeclareMathOperator{\ric}{Ric}

\DeclareMathOperator{\proj}{proj}

\DeclareMathOperator{\w}{\omega}

\DeclareMathOperator{\spann}{Span}

\def \bui#1#2{\mathrel{\mathop{\kern 0pt#1}\limits^{#2}}}
\def \buil#1#2{\mathrel{\mathop{\kern 0pt#1}\limits_{#2}}}

\let\<\langle
\let\>\rangle

\newtheorem{example}{Examples}[section]
\newtheorem{thm}{Theorem}[section]
\newtheorem{lemma}[thm]{Lemma}
\newtheorem{prop}[thm]{Proposition}

\newtheorem{cor}[thm]{Corollary}
\newtheorem{remark}[thm]{Remark}
\newtheorem{remarks}[thm]{Remarks}
\newtheorem{definition}[thm]{Definition}
\newtheorem{notation}[thm]{Notation}
\newtheorem{exabout:ample}[thm]{Example}
\newtheorem{conjecture}[thm]{Conjecture}

\begin{document}
\date{\normalsize \today}

\title{Geometric eigenvalue estimates of Kuttler-Sigillito type on differential forms}
\author[1,2]{Rodolphe Abou Assali\thanks{\texttt{rodolphe.abou-assali@univ-lorraine.fr, rodolphe.abouassali.1@ul.edu.lb}}}

\affil[1]{{\footnotesize Lebanese University, Faculty of Sciences II, Department of Mathematics, P.O. Box 90656 Fanar-Matn, Lebanon}}
\affil[2]{\footnotesize Universit\'e de Lorraine, CNRS, IECL, F-54000 Nancy, France}

\newcommand{\myAbst}[1]{
    \begin{center}
        \begin{minipage}{0.7\textwidth}
        \textbf{Abstract}. {\small #1}
        \end{minipage}
    \end{center}
}

\maketitle 
\myAbst{%
We introduce a new biharmonic Steklov problem on differential forms with Dirichlet-type boundary conditions and show that it is elliptic. We prove the existence of a discrete spectrum for this problem and give variational characterizations for eigenvalues associated to it. We establish eigenvalue estimates known as Kuttler-Sigillito inequalities, that connect the eigenvalues of different problems on differential forms with curvature quantities on the manifold.
}
\textit{Mathematics Subject Classification} (2020): 35A15, 35G15, 35J40, 35P15, 53C21, 58C40, 58J32, 58J50

\textit{Keywords}: Riemannian manifolds with boundary, biharmonic Steklov operator, discrete spectrum, eigenvalue estimates

\section{Introduction} \label{sec:introduction}
On planar domains, J. Kuttler and V. Sigillito \cite{Kuttler1, Kuttler&Sigillito} established in 1968 a series of inequalities relating the eigenvalues of the membrane problems (both Dirichlet and Neumann Laplacian eigenvalues), the biharmonic Steklov problems, and the classical Steklov problem. More recently, Hassannezhad and Siffert \cite{hassannezhadETsiffert} extended some of these results to compact Riemannian manifolds with $C^2$ boundary. In \cite{Article1}, we have lately extended to differential forms Kuttler-Sigillito inequalities that do not involve curvature.

Let $(M^n,g)$ be a compact Riemannian manifold with a smooth boundary and let $\nu$ be the inward unit normal along the boundary $\partial M$. Recall that the scalar biharmonic Steklov problem with Dirichlet boundary conditions (BSD) \cite{FerreroGazzolaWeth,hassannezhadETsiffert,Kuttler&Sigillito,RaulotSavo4} is defined as: 
\begin{equation*}
\begin{cases}
    \Delta^2 u = 0 & \text{on}\ M \\
    u = 0 &  \text{on } \partial M\\
    \Delta u - q \partial_\nu u = 0 &  \text{on } \partial M,
\end{cases}
\end{equation*} and the scalar biharmonic Steklov problem with Neumann boundary conditions (BSN) \cite{hassannezhadETsiffert,Kuttler&Sigillito} is:
\begin{equation*}
\begin{cases}
    \Delta^2 u = 0 & \text{on}\ M \\
   \partial_\nu u  = 0 &  \text{on } \partial M\\
    \partial_\nu (\Delta u) - \ell u  = 0 &  \text{on } \partial M,
\end{cases}
\end{equation*}
where $u$ is a smooth function on $M$, and $\Delta$ is the positive scalar Laplacian operator. In both problems, the real numbers $q$ and $\ell$ are the eigenvalues of the operator. 

 We denote by $\lambda_{k}$, $\mu_{k}$, $\sigma_{k}$, $q_{k}$ and $\ell_{k}$ the $k^{th}$ eigenvalues associated with scalar eigenfunctions for the Dirichlet, Neumann, Steklov, BSD and BSN problems, respectively
 
 We recall some of Kuttler-Sigillito inequalities. \begin{thm}\cite[Theorem 1.1]{hassannezhadETsiffert}\label{thm:firstineq}
    Let $(M^n, g)$ be a compact Riemannian manifold of dimension $n \geq 2$ with a $C^2$ boundary. For all integer $k\geq 1$, we have:
    \begin{equation*}
        \mu_k \sigma_1 \leq \ell_k \quad \text{and} \quad \mu_1 \sigma_k \leq \ell_k.
    \end{equation*}
\end{thm}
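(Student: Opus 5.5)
The plan is to compare Rayleigh quotients. First I recall the variational characterizations: the Neumann eigenvalues are given by $\mu_k=\min_{V}\max_{u\in V\setminus\{0\}}\int_M|\nabla u|^2/\int_M u^2$ over $k$-dimensional subspaces $V\subset H^1(M)$. The Steklov eigenvalues are $\sigma_k=\min\max \int_M|\nabla u|^2/\int_{\partial M}u^2$, taken over harmonic extensions, or equivalently over all $H^1$ functions. The BSN eigenvalues are
\[
\ell_k=\min_{V}\max_{u\in V\setminus\{0\}}\frac{\int_M(\Delta u)^2}{\int_{\partial M}u^2},
\]
over $k$-dimensional subspaces $V\subset\{u\in H^2(M):\partial_\nu u=0\}$, with the convention that $\mu_1=\sigma_1=\ell_1=0$ (constants) if indexing starts at $0$, or the shift handled consistently. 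I would justify the BSN characterization first, by integrating by parts: for $u,v$ in that space, $\int_M \Delta^2u\,v=\int_M\Delta u\Delta v+\int_{\partial M}(\partial_\nu\Delta u)v$ up to sign conventions, using $\partial_\nu v=0$.

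The key pointwise ingredient is the following inequality. For $u$ with $\partial_\nu u=0$, write $w=\Delta u$. Green's formula gives $\int_M|\nabla u|^2=\int_M u\,\Delta u$ (no boundary term). By Cauchy--Schwarz, $\int_M|\nabla u|^2\le \|u\|_{L^2(M)}\|\Delta u\|_{L^2(M)}$. This is combined with the Steklov bound applied to $u$ itself, $\sigma_1\int_{\partial M}u^2\le\int_M|\nabla u|^2$, which is valid when $u$ is orthogonal to constants on $\partial M$. It is also combined with a Neumann-type bound $\mu\int_M u^2\le\int_M|\nabla u|^2$. Multiplying gives
\[
\mu\,\sigma\Big(\int_M u^2\Big)\Big(\int_{\partial M}u^2\Big)\le\Big(\int_M|\nabla u|^2\Big)^2\le \int_M u^2\int_M(\Delta u)^2,
\]
hence $\mu\sigma\int_{\partial M}u^2\le\int_M(\Delta u)^2$, i.e.\ the BSN quotient of $u$ dominates $\mu\sigma$.

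For $\mu_k\sigma_1\le\ell_k$, I take $u$ in the span $W$ of the first $k$ BSN eigenfunctions. Suppose $W$ contains a nonzero $u$ that is $L^2(M)$-orthogonal to the first $k-1$ Neumann eigenfunctions. Then $\mu_k\int u^2\le\int|\nabla u|^2$. I also need $\sigma_1\int_{\partial M}u^2\le\int|\nabla u|^2$, which requires $\int_{\partial M}u=0$. This is the main obstacle, since imposing both constraints costs an extra dimension. I would try to handle it by noting that the first BSN eigenfunction is constant with $\ell_1=0$ (or treating the indexing shift). One then works in the span of eigenfunctions $2,\dots,k+1$, which are automatically $L^2(\partial M)$-orthogonal to constants, since BSN eigenfunctions are orthogonal in $L^2(\partial M)$. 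The Neumann constraint uses the $k$ dimensions available via a dimension count, matching the indexing of $\mu_k$.

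Symmetrically, for $\mu_1\sigma_k\le\ell_k$, I choose $u$ in the BSN span that satisfies $\int_M u=0$, so that $\mu_1$ applies. The remaining dimensions are used to make $u$ orthogonal on $\partial M$ to the first Steklov eigenfunctions, via the min-max for $\sigma_k$ over $H^1$ functions. The same constant-function bookkeeping is the delicate point. Regularity ($C^2$ boundary) only enters to guarantee that the BSN eigenfunctions lie in $H^2$ and that Green's formulas hold.
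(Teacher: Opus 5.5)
There is a genuine gap, and it is the one you flag as the ``main obstacle'': your proposed resolution does not close the dimension count. Use the only meaningful convention, with constants at index $0$ so that $\mu_1,\sigma_1,\ell_1>0$; otherwise both inequalities just read $0\le\ell_k$. Your scheme applies both Rayleigh bounds to the same $u$. So $u$ must satisfy $\int_{\partial M}u=0$ and $u\perp_{L^2(M)}1,\phi_1,\dots,\phi_{k-1}$, while the BSN bound $\|\Delta u\|^2\le\ell_k\|u\|^2_{L^2(\partial M)}$ is only available on $\mathrm{span}(1,u_1,\dots,u_k)$. Passing to $W=\mathrm{span}(u_1,\dots,u_k)$ makes the boundary condition automatic. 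But you are then imposing $k$ conditions on a $k$-dimensional space, so a nonzero solution need not exist. Run honestly, your count yields only $\mu_{k-1}\sigma_1\le\ell_k$. In your ``$\mu_1=0$'' indexing it is the same off-by-one: orthogonality to the first $k-1$ Neumann eigenfunctions controls the $(k-1)$-st nonzero one, while $\ell_{k+1}$ is the $k$-th nonzero one. The second inequality has the same defect. Imposing $\int_M u=0$ together with $L^2(\partial M)$-orthogonality to $1,\psi_1,\dots,\psi_{k-1}$ gives only $\mu_1\sigma_{k-1}\le\ell_k$. In both cases the constant function is paid for twice, once as the mean on $M$ and once as the mean on $\partial M$.

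The missing observation is that constants cost nothing on the Neumann side. $\nabla u$ and $\Delta u$ are unchanged under $u\mapsto u+c$, and $\int_M\Delta u=0$ when $\partial_\nu u=0$. Hence $\|\nabla u\|^2=\int_M(u-\bar u)\Delta u\le\|u-\bar u\|\,\|\Delta u\|$, and the Neumann bound should be applied to $u-\bar u$ rather than to $u$. This gives $\mu_k\|\nabla u\|^2_{L^2(M)}\le\|\Delta u\|^2_{L^2(M)}$ for every $u\in H^2(M)$ with $\partial_\nu u=0$ and $u\perp_{L^2(M)}\phi_1,\dots,\phi_{k-1}$, with no condition at all when $k=1$. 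This is the Neumann analogue of the alternative characterization \eqref{eq:vpKAlternativeDirichletForme}.

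The paper only quotes the theorem from \cite{hassannezhadETsiffert}, so there is no in-paper proof to compare with. With this observation the proof becomes a factorization of Rayleigh quotients. Pick $0\neq v\in W$ with $v\perp_{L^2(M)}\phi_1,\dots,\phi_{k-1}$; these are $k-1$ conditions. Then $\int_{\partial M}v=0$, and $v$ is nonconstant. Chain
\[
\mu_k\|\nabla v\|^2_{L^2(M)}\le\|\Delta v\|^2_{L^2(M)}\le\ell_k\|v\|^2_{L^2(\partial M)}\le\frac{\ell_k}{\sigma_1}\|\nabla v\|^2_{L^2(M)}.
\]
For the second inequality, use $\mathbb{R}\oplus W$ as a $(k+1)$-dimensional test space for $\sigma_k$. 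There $\|c+v\|^2_{L^2(\partial M)}=c^2|\partial M|+\|v\|^2_{L^2(\partial M)}$, which gives $\sigma_k\le\max_{v\in W}\|\nabla v\|^2/\|v\|^2_{L^2(\partial M)}\le\ell_k/\mu_1$. Here $\|v\|_{L^2(\partial M)}\neq0$ for $0\neq v\in W$, since otherwise the BSN bound forces $\Delta v=0$ with zero trace. So drop the step where you multiply the two Rayleigh inequalities for one and the same $u$. The $\mu$-bound and the $\sigma$-bound must be applied to two different constant shifts of $u$.
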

The theorem was proven in \cite{Kuttler&Sigillito} for $k=1$. On the other hand, we have the following:
\begin{thm}\cite[Table 1]{Kuttler&Sigillito}\label{thm:ineqKuttlerSigillito}
  On a bounded domain $M$ of the plane with piecewise $C^1$ boundary $\partial M$, the following inequalities hold:
\begin{enumerate}[label=(\roman*)]
    \item $q_1 \sigma_1^2 \leq \ell_1$,
    \item $\mu_1^{-1} \leq \lambda_1^{-1} + (q_1 \ell_1)^{-\frac{1}{2}}$,
    \item $\mu_1^{-1} \leq \lambda_1^{-1} + (q_1 \sigma_1)^{-1}$.
\end{enumerate}

\end{thm}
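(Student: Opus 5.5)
Each of the three inequalities will follow from plugging suitable test functions, built from the eigenfunctions of one problem, into the variational characterizations of the others. I use the standard Rayleigh quotients on a bounded planar domain $M$:
\begin{equation*}
q_1=\inf_{u\in H^2\cap H^1_0}\frac{\int_M(\Delta u)^2}{\int_{\partial M}(\partial_\nu u)^2},\qquad
\ell_1=\inf_{\partial_\nu u=0,\ \int_{\partial M}u=0}\frac{\int_M(\Delta u)^2}{\int_{\partial M}u^2},
\end{equation*}
together with the usual ones for $\lambda_1$, $\mu_1$ and $\sigma_1$. For $\ell_1$, the normalization $\int_{\partial M}u=0$ removes constants.

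\emph{Inequality (i).} Let $u$ be a first BSN eigenfunction, so $\partial_\nu u=0$ and $\int_{\partial M}u=0$. Let $v$ be the harmonic function with $v=u$ on $\partial M$, and set $w=u-v\in H^1_0$. Then $\Delta w=\Delta u$ and $\partial_\nu w=-\partial_\nu v$. The BSD quotient of $w$ gives $\int_M(\Delta u)^2\ge q_1\int_{\partial M}(\partial_\nu v)^2$. Since $v$ is harmonic with $\int_{\partial M}v=0$, I use the Dirichlet-to-Neumann comparison
\begin{equation*}
\int_{\partial M}(\partial_\nu v)^2\ \ge\ \sigma_1\int_M|\nabla v|^2 \ \ge\ \sigma_1^2\int_{\partial M}v^2 .
\end{equation*}
Both steps come from expanding the boundary data of $v$ in Steklov eigenfunctions $\phi_j$. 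Writing $v|_{\partial M}=\sum c_j\phi_j$, we get $\int_{\partial M}(\partial_\nu v)^2=\sum\sigma_j^2c_j^2\ge\sigma_1^2\sum c_j^2$. Combining these yields $\ell_1\ge q_1\sigma_1^2$.

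\emph{Inequalities (ii) and (iii).} Take a first Neumann eigenfunction $f$, with $\int_M f=0$. Decompose $f=h+w$, where $h$ is harmonic with $h=f$ on $\partial M$ and $w\in H^1_0$. Then $\int_M|\nabla f|^2=\int_M|\nabla h|^2+\int_M|\nabla w|^2$ by orthogonality.
\begin{itemize}
\item The Dirichlet part gives $\int_M w^2\le\lambda_1^{-1}\int_M|\nabla w|^2$.
\item For the harmonic part I use the duality estimate $\int_M h^2\le (q_1\sigma_1)^{-1}\int_M|\nabla h|^2$ for (iii). To prove it, let $\psi\in H^2\cap H^1_0$ solve $\Delta\psi=h$. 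Green's formula gives $\int_M h^2=\int_M h\,\Delta\psi=-\int_{\partial M}h\,\partial_\nu\psi$ (up to sign conventions). Cauchy--Schwarz then gives $\int_M h^2\le\|h\|_{L^2(\partial M)}\|\partial_\nu\psi\|_{L^2(\partial M)}$, and the BSD quotient gives $\|\partial_\nu\psi\|^2\le q_1^{-1}\int_M h^2$. Hence $\int_M h^2\le q_1^{-1}\int_{\partial M}h^2$.
\item For (ii), instead bound $\int_{\partial M}h^2$ by $\ell_1^{-1/2}$ times a gradient quantity, by arguing symmetrically with the BSN quotient, and take the geometric mean of the two estimates.
\end{itemize}

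The main obstacle is the normalization. The Steklov inequality $\int_{\partial M}h^2\le\sigma_1^{-1}\int_M|\nabla h|^2$ requires $\int_{\partial M}h=0$, whereas $f$ is only normalized by $\int_M f=0$. I would subtract the constant $c=\frac{1}{|\partial M|}\int_{\partial M}f$ from $f$, which leaves $\nabla f$ unchanged and only increases $\int_M f^2$ after re-centering. Then I would run the argument on $f-c$ and use $\int_M f^2\le\int_M(f-c)^2$. This gives
\begin{equation*}
\int_M f^2\le\int_M(w+h)^2\le\Big(\lambda_1^{-1}+(q_1\sigma_1)^{-1}\Big)\int_M|\nabla f|^2,
\end{equation*}
using $\big(\|w\|+\|h\|\big)^2\le(a+b)(\|\nabla w\|^2/a\cdot a\ldots)$, i.e.\ Cauchy--Schwarz in the form $(\sqrt{\alpha x}+\sqrt{\beta y})^2\le(\alpha+\beta)(x+y)$. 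Dividing by $\mu_1\int_M f^2=\int_M|\nabla f|^2$ gives (iii), and (ii) follows the same way with $(q_1\ell_1)^{-1/2}$ in place of $(q_1\sigma_1)^{-1}$.
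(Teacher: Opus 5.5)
The paper gives no proof of this statement; it is quoted from Kuttler--Sigillito, so your proposal has to stand on its own.

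\textbf{Parts (i) and (iii).} Your arguments are sound, modulo the $H^2$ regularity you tacitly assume near corners of a piecewise $C^1$ boundary. The splitting $u=v+w$ together with $\int_{\partial M}(\partial_\nu v)^2\ge\sigma_1^2\int_{\partial M}v^2$ gives (i). For (iii), the duality bound $\int_M h^2\le q_1^{-1}\int_{\partial M}h^2$, the shift by a constant (harmless because $\int_M(f-c)^2=\int_M f^2+c^2|M|$) and the final Cauchy--Schwarz step all work.

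\textbf{The gap is (ii).} There you need $\int_M h^2\le(q_1\ell_1)^{-1/2}\int_M|\nabla h|^2$ for the harmonic part, and this is not a variant of the (iii) estimate. By (i), $(q_1\ell_1)^{-1/2}\le(q_1\sigma_1)^{-1}$, so (ii) is the stronger inequality; together with (i) it implies (iii). Neither reading of your sentence delivers it.
\begin{itemize}
\item Any route that starts from $\int_M h^2\le q_1^{-1}\int_{\partial M}h^2$ cannot beat $(q_1\sigma_1)^{-1}$. The best constant in $\int_{\partial M}h^2\le C\int_M|\nabla h|^2$ is at least $\sigma_1^{-1}$ under any normalization by constants (test a first Steklov eigenfunction).
\item The BSN counterpart of your duality bound is $\int_M h^2\le\ell_1^{-1}\int_{\partial M}(\partial_\nu h)^2$, valid for harmonic $h$ with $\int_M h=0$. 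Its geometric mean with the BSD bound gives $\int_M h^2\le(q_1\ell_1)^{-1/2}\lVert h\rVert_{L^2(\partial M)}\lVert\partial_\nu h\rVert_{L^2(\partial M)}$. But $\int_M|\nabla h|^2=\bigl|\int_{\partial M}h\,\partial_\nu h\bigr|$ is bounded \emph{above}, not below, by that product, so you cannot conclude.
\end{itemize}

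\textbf{The missing idea.} Take the geometric mean on an auxiliary harmonic function rather than on $h$ itself. Let $\tilde h$ be harmonic with $\int_M\tilde h=0$. Let $\chi\in H^2\cap H^1_0(M)$ solve $\Delta\chi=\tilde h$, and let $\psi$ solve $\Delta\psi=\tilde h$, $\partial_\nu\psi=0$, $\int_{\partial M}\psi=0$ (solvable because $\int_M\tilde h=0$). Then $\phi:=\psi-\chi$ is harmonic, with $\phi=\psi$ and $\partial_\nu\phi=-\partial_\nu\chi$ on $\partial M$. Use the paper's conventions ($\Delta\ge0$, $\nu$ inward). The first gradient term below equals $\int_M\tilde h^2$ because $\partial_\nu\psi=0$, and the second vanishes because $\chi_{|\partial M}=0$ and $\Delta\tilde h=0$. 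This gives
\begin{align*}
\int_M\tilde h^2&=\int_M\langle\nabla\tilde h,\nabla\psi\rangle-\int_M\langle\nabla\tilde h,\nabla\chi\rangle=\int_M\langle\nabla\tilde h,\nabla\phi\rangle,\\
\int_M|\nabla\phi|^2&=\int_{\partial M}\psi\,\partial_\nu\chi\le\lVert\psi\rVert_{L^2(\partial M)}\lVert\partial_\nu\chi\rVert_{L^2(\partial M)}\le(q_1\ell_1)^{-1/2}\int_M\tilde h^2.
\end{align*}
The last step uses the BSN quotient for $\psi$ and the BSD quotient for $\chi$. Cauchy--Schwarz applied to the first line then yields $\int_M\tilde h^2\le(q_1\ell_1)^{-1/2}\int_M|\nabla\tilde h|^2$.

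To use this, shift $f$ by the interior mean $c=\frac{1}{|M|}\int_M h$ of its harmonic part, not by the boundary mean, since the Neumann problem for $\psi$ needs $\int_M\tilde h=0$. The bound $\int_M(f-c)^2\ge\int_M f^2$ still holds, and the rest of your argument for (iii) then gives (ii).
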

We recall the definition of a star-shaped manifold: 
\begin{itemize}
  \item A domain \(U\subset \mathbb{R}^n\) is said to be star-shaped with respect to a point \(x_0\) if for every \(x\in U\), the line segment connecting \(x_0\) to \(x\) is entirely contained in \(U\).
\item A manifold $M$ with $C^2$ boundary is said to be \emph{star-shaped} with respect to $x_0 \in M$ if there exists a star-shaped domain 
$\Omega \subset \mathbb{R}^n \simeq T_{x_0}M$ with respect to $0$, such that the exponential map $\exp_{x_0}$ is defined on $\Omega$ and $\exp_{x_0}:\Omega \to M$ is a diffeomorphism.  

\end{itemize}

Let us fix $x_0\in M$ and assume that $M$ is star-shaped with respect to $x_0$, we define the following notation: $d_{x_0} : M \to [0, \infty);\ y \mapsto d_{x_0}(y) := d({x_0}, y)$ that is the Riemannian distance induced by the metric $g$, $\rho_{x_0}(y) := \frac{1}{2} d_{x_0}(y)^2,$ $r_{\max} := \underset{y \in M}{\max}\ d_{x_0}(y) = \underset{y \in \partial M}{\max}\ d_{x_0}(y)$. We also denote by $h:\partial M\longrightarrow \mathbb{R}$ the map given by \begin{equation}\label{eq:h}h := \langle -\nabla \rho_{x_0}, \nu \rangle,
\qquad
h_{\max} := \max_{y \in \partial M} \langle -\nabla \rho_{x_0}, \nu \rangle,
\qquad
h_{\min} := \min_{y \in \partial M} \langle -\nabla \rho_{x_0}, \nu \rangle.
\end{equation}
In particular, this implies $\langle \nabla \rho_{x_0}(x), \nu(x) \rangle \geq 0$ for all $x \in \partial M$.
 \begin{thm}\label{thm:ineqScalaire1}\cite[Theorem 1.3]{hassannezhadETsiffert}
     Let $(M^n,g)$ be a compact Riemannian manifold whose Ricci curvature $\ric_g$ satisfies $\ric_g\geq(n-1)\kappa$, for $\kappa\in\mathbb{R}$. Assume that $M$ is star shaped with respect to $x_0\in M$, then we have
     \begin{equation*}
         \sigma_1\geq \frac{h_{min}\mu_1}{2r_{max}\mu_1^{\frac{1}{2}}+C_0},
     \end{equation*} where $C_0:=C_0(n,\kappa,r_{max})$ is a positive constant depending only on $n,\ \kappa$ and $r_{max}.$
 \end{thm}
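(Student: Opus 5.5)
The plan is a Rellich--Pohozaev type argument with the vector field $X=\nabla\rho_{x_0}$, combined with the variational characterization of $\sigma_1$. Write $\sigma_1=\inf\{\int_M|\nabla u|^2/\int_{\partial M}u^2:\ \int_{\partial M}u=0\}$. Equivalently, we must show that every $u$ with $\int_{\partial M}u=0$ satisfies
\[
h_{\min}\mu_1\int_{\partial M}u^2\le \bigl(2r_{\max}\mu_1^{1/2}+C_0\bigr)\int_M|\nabla u|^2 .
\]
It is cleaner to take $u$ to be a first Steklov eigenfunction, which is harmonic with $\partial_\nu u=-\sigma_1u$ up to the sign convention for the inward normal. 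Then $u$ is admissible in the Neumann Rayleigh quotient only after subtracting its mean; call $\bar u$ the mean over $M$.

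\textbf{Step 1: the integral identity.} Integrate $\operatorname{div}\bigl((u-\bar u)^2X\bigr)$ over $M$. This gives
\[
\int_{\partial M}(u-\bar u)^2\,h=\int_M (u-\bar u)^2\Delta_g\rho_{x_0}\;+\;2\int_M (u-\bar u)\langle\nabla u,X\rangle ,
\]
up to sign conventions, with $\Delta_g\rho_{x_0}$ the analyst's Laplacian. Here $h=\langle-\nabla\rho_{x_0},\nu\rangle\ge h_{\min}>0$ by star-shapedness with the inward normal.

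\textbf{Step 2: estimate the interior terms.}
\begin{itemize}
\item The Laplacian comparison theorem under $\ric_g\ge (n-1)\kappa$ gives $|\Delta_g\rho_{x_0}|=|1+d\,\Delta d|\le 1+(n-1)d\,\mathrm{ct}_\kappa(d)\le C(n,\kappa,r_{\max})$. This uses that $\rho_{x_0}$ is smooth on $M$, since $\exp_{x_0}$ is a diffeomorphism, and it requires an upper bound only.
\item Since $|X|=d_{x_0}\le r_{\max}$, Cauchy--Schwarz bounds the last term by $2r_{\max}\|u-\bar u\|_{L^2(M)}\|\nabla u\|_{L^2(M)}$.
\item The Neumann characterization gives $\|u-\bar u\|_{L^2(M)}^2\le \mu_1^{-1}\|\nabla u\|^2$.
\end{itemize}
Together these yield
\[
h_{\min}\int_{\partial M}(u-\bar u)^2\le \Bigl(\frac{C}{\mu_1}+\frac{2r_{\max}}{\mu_1^{1/2}}\Bigr)\int_M|\nabla u|^2 .
\]

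\textbf{Step 3: conclude.} Because $\int_{\partial M}u=0$, we have $\int_{\partial M}(u-\bar u)^2=\int_{\partial M}u^2+\bar u^2|\partial M|\ge\int_{\partial M}u^2$. Using $\int_M|\nabla u|^2=\sigma_1\int_{\partial M}u^2$ and multiplying by $\mu_1$ gives $h_{\min}\mu_1\le\sigma_1\,(C_0+2r_{\max}\mu_1^{1/2})$ with $C_0=C$, which is exactly the claim.

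\textbf{Main obstacle.} The delicate point is the Laplacian comparison, which must hold globally on $M$ rather than only in the barrier sense. The star-shaped hypothesis is what rules out the cut locus, so $\rho_{x_0}$ is smooth. For $\kappa>0$ one also needs $r_{\max}<\pi/\sqrt\kappa$, which follows from the same hypothesis. The constant $C_0=1+(n-1)r_{\max}\sqrt{|\kappa|}\coth(\sqrt{|\kappa|}r_{\max})$ (for $\kappa<0$) depends only on $n,\kappa,r_{\max}$. For $\kappa>0$ the comparison term has a favourable sign and we can take $C_0=n$.
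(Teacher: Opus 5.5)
Your proposal is correct. The paper only cites this scalar statement, but its proof of the form version (Theorem~\ref{thm:ineg1}), specialized to $p=0$, is exactly your argument: the divergence identity for $\frac12|\omega|^2\nabla\rho_{x_0}$, the one-sided Laplacian comparison, Cauchy--Schwarz with $|\nabla\rho_{x_0}|=d_{x_0}\le r_{\max}$, the Neumann quotient applied to the $L^2(M)$-projection orthogonal to the kernel (your $u-\bar u$), and the comparison $\|u-\bar u\|_{L^2(\partial M)}\ge\|u\|_{L^2(\partial M)}$, giving the same constant $C_0=\max_M(1+d_{x_0}H_\kappa(d_{x_0}))$. The only slip is your claimed bound on $|\Delta_g\rho_{x_0}|$: a Ricci lower bound controls $\Delta_g\rho_{x_0}$ only from above. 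As you note yourself, that upper bound is all the argument needs, since the term carries the nonnegative weight $(u-\bar u)^2$ and $C_0\ge n>0$.
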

When $M$ is a domain in $\mathbb{R}^2$ the constant becomes $C_0=2$ and this was previously shown in inequality (VIII) of \cite{Kuttler&Sigillito}. The following theorem generalizes the inequalities (V) and (VI) of \cite{Kuttler&Sigillito}.

\begin{thm}\label{thm:ineqScalaire2}\cite[Theorem 1.4]{hassannezhadETsiffert}
    Let $(M^n, g)$ be a compact Riemannian manifold whose sectional curvature $K_g$ satisfies $\kappa_1 \le K_g \le \kappa_2$. Moreover, assume that there exists ${x_0} \in M$ such that $M$ is star shaped with respect to ${x_0}$ and the cut locus of ${x_0}$ in $M$ is the empty set. Then there exist positive constants $C_i := C_i(n, \kappa_1, \kappa_2, r_{\max})$, $i=1,2$ such that
\begin{enumerate}
    \item $ \lambda_k \geq \frac{C_1 q_m }{h_{\max}} $\\
    \item $\lambda_k\leq \frac{4r^2_{\max} q^2_k - 2C_2 h_{\min} q_k}{h_{\min}}$.
\end{enumerate}
Here, $m$ is the multiplicity of $\lambda_k$.
\end{thm}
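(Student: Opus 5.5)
The plan is to prove both inequalities from a Rellich--Pohozaev identity for the vector field $X=\nabla\rho_{x_0}$, combined with Hessian comparison for $\rho_{x_0}$. For $u$ smooth with $u=0$ on $\partial M$ we have $\nabla u=(\partial_\nu u)\nu$ on $\partial M$. Integrating $\operatorname{div}\big(2\langle X,\nabla u\rangle\nabla u-|\nabla u|^2X\big)$ over $M$ then gives, up to the sign conventions for $\Delta$ and $\nu$ that I will fix carefully,
\begin{equation*}
\int_{\partial M} h\,(\partial_\nu u)^2
= 2\int_M \langle X,\nabla u\rangle\,\Delta u \;-\;\int_M \big(2\,\mathrm{Hess}\,\rho_{x_0}(\nabla u,\nabla u)-\Delta\rho_{x_0}\,|\nabla u|^2\big).
\end{equation*}
Since $\kappa_1\le K_g\le\kappa_2$ and the cut locus of $x_0$ is empty, $\rho_{x_0}$ is smooth on $M$. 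Hessian comparison (Rauch) yields $a\,g\le \mathrm{Hess}\,\rho_{x_0}\le b\,g$ on $M$, with $a,b>0$ depending only on $n,\kappa_1,\kappa_2,r_{\max}$; for $\kappa_2>0$ this uses that the geodesic balls have radius $r_{\max}<\pi/\sqrt{\kappa_2}$, which I would check follows from the absence of cut points. Consequently $\big|2\,\mathrm{Hess}\,\rho_{x_0}-\Delta\rho_{x_0}\,g\big|\le C(n,\kappa_1,\kappa_2,r_{\max})$. Also $|X|=d_{x_0}\le r_{\max}$.

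\emph{Inequality (1).} Let $E$ be the $m$-dimensional eigenspace of $\lambda_k$; every $u\in E$ satisfies $\Delta u=\lambda_k u$ and $u|_{\partial M}=0$. In the identity I rewrite $\int\langle X,\nabla u\rangle u=-\tfrac12\int \Delta\rho_{x_0}\,u^2$ and use $\int|\nabla u|^2=\lambda_k\int u^2$. The goal is a lower bound $\int_{\partial M}h(\partial_\nu u)^2\ge C_1\lambda_k\int_M u^2$, with the two-sided Hessian bounds controlling every interior term. Then
\begin{equation*}
\int_{\partial M}(\partial_\nu u)^2\ge h_{\max}^{-1}\int_{\partial M} h\,(\partial_\nu u)^2\ge C_1 h_{\max}^{-1}\lambda_k\int_M u^2 .
\end{equation*}
The BSD min--max characterization $q_m=\min_{\dim V=m}\max_{u\in V}\int(\Delta u)^2/\int_{\partial M}(\partial_\nu u)^2$, taken over $V\subset H^2\cap H^1_0$, applied with $V=E$ gives $q_m\le \lambda_k^2\int u^2\big/\big(C_1h_{\max}^{-1}\lambda_k\int u^2\big)$. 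This is exactly (1).

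\emph{Inequality (2).} Let $V$ be the span of the first $k$ BSD eigenfunctions. The eigenfunctions are orthogonal both for $\int\Delta u\Delta v$ and for the boundary pairing, so every $u\in V$ satisfies $\|\Delta u\|^2\le q_k\int_{\partial M}(\partial_\nu u)^2$. Inserting this into the identity together with Cauchy--Schwarz on the first term and the Hessian bounds gives
\begin{equation*}
\frac{h_{\min}}{q_k}\|\Delta u\|^2\le 2r_{\max}\|\nabla u\|\,\|\Delta u\|-C_2\|\nabla u\|^2 .
\end{equation*}
Here the sign of the curvature term must be arranged so that it can be bounded below by $C_2\|\nabla u\|^2$, possibly with $C_2$ of either sign. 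Writing $t=\|\nabla u\|/\|\Delta u\|$, this quadratic inequality in $t$ yields an upper bound on $1/t$. Combined with $\|\nabla u\|^2\le\|u\|\,\|\Delta u\|$, it produces $\|\nabla u\|^2/\|u\|^2\le \|\Delta u\|^2/\|\nabla u\|^2\le (4r_{\max}^2q_k^2-2C_2h_{\min}q_k)/h_{\min}$. Min--max for the Dirichlet Laplacian on the $k$-dimensional space $V\subset H^1_0$ then gives (2).

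The main obstacle is the curvature bookkeeping: obtaining the precise constants and signs in the comparison for $2\,\mathrm{Hess}\,\rho_{x_0}-\Delta\rho_{x_0}\,g$, and ensuring that the quadratic inequality in $t$ closes to exactly the stated expression. I would first carry out the Euclidean case, where $\mathrm{Hess}\,\rho_{x_0}=g$, as a model, and then perturb using the comparison estimates.
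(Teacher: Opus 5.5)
\textbf{Overview.} Your argument for (2) follows the route the paper uses for its forms analogue, Theorem~\ref{thm:ineg2}. Note, however, that the paper gives no proof of this scalar statement; it quotes it from Hassannezhad--Siffert. The common route is: Rellich identity with $F=\nabla\rho_{x_0}$, Cauchy--Schwarz with $|F|\le r_{\max}$, comparison bounds, a quadratic inequality, and Dirichlet min--max. Your use of $\|\nabla u\|^2\le\|u\|\,\|\Delta u\|$ is equivalent to the paper's use of \eqref{eq:vpKAlternativeDirichletForme}.

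\textbf{The gap is in (1).} The whole content of (1) is $C_1>0$, and the bookkeeping you describe does not deliver it. In the paper's conventions ($\Delta\ge0$, $\nu$ inward) the identity is
\[
\int_{\partial M}h\,(\partial_\nu u)^2\,d\mu_g=-2\int_M\langle\nabla\rho_{x_0},\nabla u\rangle\,\Delta u\,d\mu_g+\int_M\bigl(2\nabla^2\rho_{x_0}(\nabla u,\nabla u)+\Delta\rho_{x_0}\,|\nabla u|^2\bigr)\,d\mu_g .
\]
Your display is not correct as written: for Euclidean domains with $n\ge3$ it does not reduce to the classical $\int_{\partial M}h(\partial_\nu u)^2\,d\mu_g=2\lambda_k\int_M u^2\,d\mu_g$, and the sign of the Hessian term is decisive here. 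For $\Delta u=\lambda_k u$ and $\phi:=-\Delta\rho_{x_0}$ the identity becomes
\[
\int_{\partial M}h\,(\partial_\nu u)^2\,d\mu_g=\lambda_k\int_M\phi\,u^2\,d\mu_g-\int_M\phi\,|\nabla u|^2\,d\mu_g+2\int_M\nabla^2\rho_{x_0}(\nabla u,\nabla u)\,d\mu_g .
\]
Using $\phi_-\le\phi\le\phi_+$ from Theorem~\ref{thm:laplace}, $\nabla^2\rho_{x_0}\ge a\,g$, and $\|\nabla u\|^2=\lambda_k\|u\|^2$, what you obtain is $C_1=2a+\phi_--\phi_+$. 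This equals $2$ in flat space and tends to $2$ as $r_{\max}\to0$. It is negative, however, in admissible examples where every comparison bound is an equality:
\begin{itemize}
\item for a geodesic ball of radius $R$ in $\mathbb{H}^n$, $C_1=n+1-(n-1)R\coth R$, negative once $R\coth R>\frac{n+1}{n-1}$;
\item for a cap of radius $R<\pi/2$ in the unit sphere, $C_1=(n+1)R\cot R-(n-1)$, negative once $R\cot R<\frac{n-1}{n+1}$.
\end{itemize}
The loss is structural, not a matter of finer estimates. The two $\phi$-terms equal exactly $-\frac12\int_M u^2\,\Delta^2\rho_{x_0}\,d\mu_g$, which involves derivatives of the curvature not controlled by $\kappa_1\le K_g\le\kappa_2$. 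So (1) needs either an extra smallness assumption on $r_{\max}$ in terms of $n,\kappa_1,\kappa_2$ (ensuring $2a+\phi_->\phi_+$), or a genuinely new ingredient. The paper's closing remark records the same obstruction for $p\ge1$.

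\textbf{The comparison range.} Your assertion $\nabla^2\rho_{x_0}\ge a\,g$ with $a>0$ fails under the stated hypotheses. A spherical cap of radius $R\in(\pi/2,\pi)$ is star-shaped with empty cut locus, yet $\nabla^2\rho_{x_0}$ has the eigenvalue $R\cot R<0$ along the boundary. The $\kappa_2$-comparison gives a positive bound only for $r_{\max}<\pi/(2\sqrt{\kappa_2})$. An empty cut locus does not even imply $r_{\max}<\pi/\sqrt{\kappa_2}$: a flat disk of radius $10$ has $0\le K_g\le1$. Theorems~\ref{thm:hess} and~\ref{thm:laplace} give the $\kappa_2$-bounds only on $[0,\pi/(2\sqrt{\kappa_2\vee0})]$, so such a restriction on $r_{\max}$ must be added. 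It is also needed for $\phi_-$ in (2).

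\textbf{The form of (2).} Your quadratic in $s=\|\Delta u\|/\|\nabla u\|$, namely $(h_{\min}/q_k)s^2-2r_{\max}s+C_2\le0$, gives
\[
s^2\le\frac{4r_{\max}^2q_k^2-2C_2h_{\min}q_k}{h_{\min}^2}.
\]
The denominator is $h_{\min}^2$, as in \eqref{eq:th1.4(i,2)}; the printed $h_{\min}$ is a misprint. Indeed, for small Euclidean balls $B_R$ (where $h_{\min}=r_{\max}=R$ and $q_1=n/R$), the printed right-hand side is at most $4n^2/R$, while $\lambda_1(B_R)$ is of order $R^{-2}$. Moreover, your constant is $C_2=\phi_--2b$, with $b$ the comparison upper bound of $\nabla^2\rho_{x_0}$. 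It equals $n-2$ in flat space and is $\le0$ whenever $n=2$. So you prove (2) with $C_2$ of unspecified sign, which is exactly what the paper proves for forms, not with the positive constant asserted.
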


 In the same spirit of \cite{Article1} where we have extended \thref{thm:firstineq} and \thref{thm:ineqKuttlerSigillito} to differential forms, we establish in this paper estimates that generalize the curvature-dependent inequalities of \thref{thm:ineqScalaire1} and \thref{thm:ineqScalaire2}.

We denote by $d$ the exterior differential operator and by $\delta$ its $L^2-$adjoint. We also let $\iota: \partial M \to M$ the inclusion map, and $\lrcorner$ the interior product (see \secref{sec:diffForms}). Now, we recall the biharmonic Steklov problem with Dirichlet-type conditions (BSD1) which has been extended to differential forms by F. El Chami, N. Ginoux, G. Habib, and O. Makhoul in \cite{BSFidaGeorgeOlaNicolas}. For a compact Riemannian manifold $(M^n,g)$ with smooth boundary $\partial M$, the boundary problem \begin{equation}\label{eq:BSDF1}
(BSD1)\begin{cases}
    \Delta^2 \w = 0 & \text{on }\ M \\
     \w = 0 &  \text{on  } \partial M\\
    \nu \lrcorner \Delta \w + q \iota^*  \delta\w = 0 & \text{on  } \partial M \\
     \iota^*  \Delta \w- q\nu \lrcorner d \w   = 0 & \text{on  } \partial M,
\end{cases}
\end{equation} on $p$-forms, $p\in\{0,\ldots,n\}$ has a discrete spectrum consisting of an unbounded monotonously nondecreasing sequence of positive eigenvalues of finite multiplicities $(q_{j,p})_{j\geq1}.$

\begin{thm}\cite[Theorem 2.6]{BSFidaGeorgeOlaNicolas}
    The first eigenvalue $q_{1,p}$ of \eqref{eq:BSDF1} can be characterized as follows:
\begin{eqnarray}
\label{eq:q_1Standard}q_{1,p}&=&\inf\left\{\frac{\|\Delta\omega\|_{L^2(M)}^2}{
\|\nu\lrcorner d\omega\|_{L^2(\partial
M)}^2+\|\iota^*\delta\omega\|_{L^2(\partial
M)}^2}\,|\,\omega\in\Omega^p(M),\,\omega_{|_{\partial
M}}=0\ \text{and}\ \nabla_\nu\w\neq 0\right\}\\
\label{eq:q_1Alternative}&=&\inf\left\{\frac{\|\omega\|_{L^2(\partial M)}^2}{
\|\omega\|_{L^2(
M)}^2}\,|\,\omega\in\Omega^p(M)\setminus\{0\},\,\;\Delta\omega=0\textrm{ on
}M\right\}.
\end{eqnarray}
\end{thm}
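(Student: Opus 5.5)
The first characterization \eqref{eq:q_1Standard} comes from the weak formulation of \eqref{eq:BSDF1}; the second, \eqref{eq:q_1Alternative}, will then be derived from it by a duality argument. I will use the Green formula for the Hodge Laplacian: for $p$-forms $\alpha,\beta$,
\[
\int_M \langle \Delta\alpha,\beta\rangle=\int_M \langle d\alpha,d\beta\rangle+\langle\delta\alpha,\delta\beta\rangle+\int_{\partial M}\big(\langle \nu\lrcorner d\alpha,\iota^*\beta\rangle-\langle\iota^*\delta\alpha,\nu\lrcorner\beta\rangle\big)
\]
(with signs adapted to the inward normal).

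\textbf{Step 1: the Rayleigh quotient \eqref{eq:q_1Standard}.} Take $\omega$ with $\omega_{|\partial M}=0$ and an eigenform $\omega_1$. Apply Green's formula twice to $\int_M\langle\Delta^2\omega_1,\omega\rangle=0$ to get
\[
(\Delta\omega_1,\Delta\omega)_{L^2(M)}=\int_{\partial M}\big(\langle \iota^*\Delta\omega_1,\nu\lrcorner d\omega\rangle-\langle \nu\lrcorner\Delta\omega_1,\iota^*\delta\omega\rangle\big).
\]
When $\omega=0$ on $\partial M$, the only boundary data of $\omega$ that survive are $\nu\lrcorner d\omega$ and $\iota^*\delta\omega$, which together encode $\nabla_\nu\omega$. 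The boundary conditions of \eqref{eq:BSDF1} then turn this identity into
\[
(\Delta\omega_1,\Delta\omega)=q\big[(\nu\lrcorner d\omega_1,\nu\lrcorner d\omega)_{\partial M}+(\iota^*\delta\omega_1,\iota^*\delta\omega)_{\partial M}\big].
\]
This is exactly the Euler–Lagrange equation of the quotient in \eqref{eq:q_1Standard}. Taking $\omega=\omega_1$ shows that $q\ge$ the infimum. For the reverse inequality, I would invoke the spectral theory from \cite{BSFidaGeorgeOlaNicolas}: ellipticity, a compact solution operator, and an orthonormal eigenbasis for the boundary inner product. A minimizing sequence then attains the infimum, and the minimizer is smooth and solves \eqref{eq:BSDF1}, so the infimum equals $q_{1,p}$. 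The condition $\nabla_\nu\omega\ne0$ guarantees that the denominator is nonzero.

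\textbf{Step 2: equality of the two infima, by duality.} This step is the main obstacle. Given $\omega$ with $\omega_{|\partial M}=0$, set $\eta=\Delta\omega$. Then for any harmonic $\varphi$, Green's formula gives
\[
(\eta,\varphi)_M=\int_{\partial M}\big(\langle\nu\lrcorner d\omega,\iota^*\varphi\rangle-\langle\iota^*\delta\omega,\nu\lrcorner\varphi\rangle\big).
\]
Hence $\|\Delta\omega\|_M\ge |(\eta,\varphi)|/\|\varphi\|_M$. Choose $\varphi$ harmonic with boundary value $\varphi_{|\partial M}$ equal to the combination $\nu^\flat\wedge(\nu\lrcorner d\omega)-\iota^*\delta\omega$, suitably signed. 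By the Dirichlet solvability of $\Delta$ on forms (this requires no Dirichlet harmonic fields; the Dirichlet Laplacian on forms is invertible), such a $\varphi$ exists. This choice makes the right-hand side equal to $D:=\|\nu\lrcorner d\omega\|^2+\|\iota^*\delta\omega\|^2=\|\varphi\|^2_{\partial M}$. Then
\[
\frac{\|\Delta\omega\|^2}{D}\ge \frac{D}{\|\varphi\|_M^2}=\frac{\|\varphi\|_{\partial M}^2}{\|\varphi\|_M^2}\ge \text{RHS of \eqref{eq:q_1Alternative}}.
\]
Conversely, given harmonic $\varphi$, let $\omega$ solve $\Delta\omega=\varphi$ with $\omega_{|\partial M}=0$. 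Then $(\varphi,\varphi)_M=(\Delta\omega,\varphi)$ equals the boundary pairing. By Cauchy–Schwarz this is at most $\sqrt D\,\|\varphi\|_{\partial M}$. Hence
\[
\frac{\|\Delta\omega\|^2}{D}=\frac{\|\varphi\|_M^2}{D}\le\frac{\|\varphi\|_{\partial M}^2}{\|\varphi\|_M^2},
\]
which gives the opposite inequality.

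The delicate points are twofold. First, the boundary values of $\varphi$ must match the normal and tangential components exactly, using the orthogonal splitting $\varphi_{|\partial M}=\iota^*\varphi+\nu^\flat\wedge(\nu\lrcorner\varphi)$. Second, the signs in Green's formula must be tracked carefully. Finally, the infima must be attained, or at least approximated by smooth forms, which follows from the discreteness of the spectrum.
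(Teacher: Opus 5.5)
Your proof is correct, apart from a typo noted in the second paragraph. The paper does not prove this statement itself: it quotes it from \cite{BSFidaGeorgeOlaNicolas}, adding only that \eqref{eq:q_1Alternative} is attained by $\Delta\omega_1$ for a $q_{1,p}$-eigenform $\omega_1$. That remark points to the classical Kuttler--Sigillito argument. This argument agrees with your Step~1 and with your ``conversely'' step (solve $\Delta\omega=\varphi$ with $\omega_{|\partial M}=0$, then apply Green's formula and Cauchy--Schwarz).

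The difference is in the remaining inequality, which the classical argument gets from the eigenform alone. The form $\varphi=\Delta\omega_1$ is harmonic because $\Delta^2\omega_1=0$. The boundary conditions of \eqref{eq:BSDF1} give $\iota^*\varphi=q_{1,p}\,\nu\lrcorner d\omega_1$ and $\nu\lrcorner\varphi=-q_{1,p}\,\iota^*\delta\omega_1$. Hence $\|\varphi\|^2_{L^2(\partial M)}=q_{1,p}^2D(\omega_1)$ while $\|\varphi\|^2_{L^2(M)}=q_{1,p}D(\omega_1)$, so its quotient in \eqref{eq:q_1Alternative} is exactly $q_{1,p}$.

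You instead attach to every admissible $\omega$ the harmonic Dirichlet extension of its Neumann data. This shows the two infima coincide purely at the level of Rayleigh quotients, without using the existence of eigenforms. The price is solving the full Dirichlet problem for $\Delta$ on forms. That needs no topological assumption, since a harmonic form vanishing on all of $\partial M$ is zero. Dirichlet fields in Schwarz's sense only have vanishing tangential part, so they are no obstruction here. By uniqueness, your $\varphi$ for $\omega=\omega_1$ is $\Delta\omega_1/q_{1,p}$, so the two routes agree on the eigenform. Your chain of inequalities applied to $\omega_1$ then also yields the attainment statement.

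The typo: your boundary datum $\nu^\flat\wedge(\nu\lrcorner d\omega)-\iota^*\delta\omega$ mixes a $(p+1)$-form with a $(p-1)$-form. The splitting you quote requires $\iota^*\varphi=\nu\lrcorner d\omega$ and $\nu\lrcorner\varphi=-\iota^*\delta\omega$, i.e.\ $\varphi_{|\partial M}=\nu\lrcorner d\omega-\nu^\flat\wedge\iota^*\delta\omega$. With this choice the boundary pairing is indeed $D=\|\varphi\|^2_{L^2(\partial M)}$.

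Two further points are worth stating. When $\omega_{|\partial M}=0$, on $\partial M$ one has $d\omega=\nu^\flat\wedge\nabla_\nu\omega$ and $\delta\omega=-\nu\lrcorner\nabla_\nu\omega$. Hence $D=\|\nabla_\nu\omega\|^2_{L^2(\partial M)}$, so $D\neq0$ is exactly the constraint $\nabla_\nu\omega\neq0$. In the converse step, $D>0$ follows from $\|\varphi\|^2_{L^2(M)}\le\sqrt{D}\,\|\varphi\|_{L^2(\partial M)}$ together with $\varphi\neq0$.
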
 Both infima are indeed minima: \eqref{eq:q_1Standard} is attained by an eigenform of \eqref{eq:BSDF1}, associated to $q_{1,p}$ and \eqref{eq:q_1Alternative}
is attained by $\Delta\w$, where $\w$ is an eigenform of \eqref{eq:BSDF1}, associated to $q_{1,p}.$

In \secref{sec:BSD2} of this paper, we introduce a new BSD problem as follows \begin{equation*}\label{eq:BSDF2Intro}
(BSD2)\begin{cases}
    \Delta^2 \w = 0 & \text{on } M \\
     \w = 0 &  \text{on } \partial M\\
    \iota^*  \delta\w = 0 & \text{on } \partial M \\
     \iota^*  \Delta \w- \mathbf{q}\nu \lrcorner d \w   = 0 & \text{on } \partial M,
\end{cases}
\end{equation*} that we call BSD2.

Of particular note, the (BSN) problem for $p$-differential forms is generalized in \cite{Article1}. We have studied three different BSN problems, each of which has a discrete spectrum consisting of an unbounded, non-decreasing sequence of positive real eigenvalues with finite multiplicities. These problems (BSN1),(BSN2) and (BSN3) respectively are as follows: 
\small{\[
\begin{minipage}{0.3\textwidth}
\[
\begin{cases}
\Delta^2 \w = 0 & \text{on } M \\
\nu \lrcorner \w = 0 & \text{on } \partial M \\
\nu \lrcorner d\w = 0 & \text{on } \partial M \\
\nu \lrcorner \Delta \w + \ell \iota^* \delta \w = 0 & \text{on } \partial M\\
\nu \lrcorner \Delta d\w + \ell \iota^* \w = 0 & \text{on } \partial M
\end{cases}
\]
\end{minipage}%
\quad ; \quad
\begin{minipage}{0.3\textwidth}
\[
 \begin{cases}
\Delta^2 \w = 0 & \text{on } M \\
\nu \lrcorner \w = 0 & \text{on } \partial M \\
\nu \lrcorner d\w = 0 & \text{on } \partial M \\
\iota^* \delta \w = 0 & \text{on } \partial M \\
\nu \lrcorner \Delta d \w + \mathbf{l} \iota^* \w = 0 & \text{on } \partial M
\end{cases}
\]
\end{minipage}%
\quad \text{and} \quad
\begin{minipage}{0.3\textwidth}
\[
\begin{cases}
\Delta^2 \w = 0 & \text{on } M \\
\nu \lrcorner \w = 0 & \text{on } \partial M \\
\nu \lrcorner d \w = 0 & \text{on } \partial M \\
\nu \lrcorner \Delta \w = 0 & \text{on } \partial M \\
\nu \lrcorner \Delta d\w +  \textit{l} \iota^* \w = 0 & \text{on } \partial M.
\end{cases}
\]
\end{minipage}
\]}

\begin{notation}\cite[Paragraph 2]{RaulotSavo2}\label{not:SpWp}
    We recall the Bochner formula: On differential forms, we have \[\Delta=\nabla^*\nabla+W^{[p]}\] where $W^{[p]}$ is the curvature term, which is a self-adjoint endomorphism acting on $p$-forms.\\
     We also recall that the shape operator $S$ acting on $T(\partial M)$ is defined by $S(X)=-\nabla_X\nu$ for all $x\in \partial M$, $X\in T_x\partial M$.\\
    It is extended to all $\alpha\in\Omega^p(\partial M)$ by \[(S^{[p]}\alpha) (X_1,\ldots,X_p)=\sum_{i=1}^p\alpha (X_1,\ldots,S(X_i),\ldots,X_p).\]
\end{notation}
 We denote by $\lambda_{k,p}$, $\mu_{k,p}$, $\sigma_{k,p}$, $q_{k,p}$ and $\mathbf{q}_{k,p}$ the $k^{th}$ eigenvalues for differential forms for arbitrary degree $p$ of the following problems, respectively: Dirichlet, Neumann, Steklov, BSD1 and BSD2.
 
The following generalizations of \thref{thm:ineqScalaire1} and \thref{thm:ineqScalaire2} are the main results of this note. Recall that $h_{\min}$ and $h_{\max}$ have been defined in \eqref{eq:h}.
     \begin{thm}\label{thm:ineg1}
         
     Let $(M^n,g)$ be a compact Riemannian manifold with a smooth boundary $\partial M$. Assume that $M$ is star-shaped with respect to $x_0$ and that its Ricci curvature satisfies $\ric_g\geq (n-1)\kappa$ where $\kappa\in\mathbb{R}$, $W^{[p]}\geq 0$, and $S^{[p]}\geq 0$. Then for $p\geq1$, we have  
     \begin{equation}\label{eq:sigma1>mu1/mu_1}
      \sigma_{1,p}> \frac{\frac{1}{2}h_{\min} \mu_{1,p}}{r_{\max}\mu_{1,p}^{\frac{1}{2}}+\frac{1}{2}\max(1+d_{x_0}.H_\kappa\circ d_{x_0})}.
     \end{equation}
     \end{thm}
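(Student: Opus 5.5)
The plan is to adapt the scalar argument of Hassannezhad--Siffert (\thref{thm:ineqScalaire1}) to forms, using a Rellich--Pohozaev type identity built from the vector field $X=\nabla\rho_{x_0}$. I will also use the variational characterizations of $\sigma_{1,p}$ and $\mu_{1,p}$. Let $\w$ be a first eigenform of the Neumann problem on $p$-forms, with $\Delta\w=\mu_{1,p}\w$ and absolute boundary conditions $\nu\lrcorner\w=0$, $\nu\lrcorner d\w=0$. I will use $\w$ as a test form in the Rayleigh quotient for $\sigma_{1,p}$. For this it is enough to show
\[
\mu_{1,p}\|\w\|^2_{L^2(M)}\le \|d\w\|^2+\|\delta\w\|^2\le \sigma\text{-type boundary control},
\]
that is, to get a bound of the form
\[
\tfrac12 h_{\min}\|\w\|^2_{L^2(M)}\le \frac{1}{\mu_{1,p}}\Big(r_{\max}\mu_{1,p}^{1/2}+\tfrac12 C\Big)\,\|\w\|^2_{L^2(\partial M)},
\]
where $C=\max(1+d_{x_0}H_\kappa\circ d_{x_0})$. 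I have not yet fixed the exact direction of the roles. Combining this with $\sigma_{1,p}\le \frac{\|d\hat\w\|^2+\|\delta\hat\w\|^2}{\|\hat\w\|_{\partial M}^2}$ is the wrong direction for a lower bound on $\sigma_{1,p}$. So instead I will take $\w$ to be a first Steklov eigenform, which is harmonic ($\Delta\w=0$) with $\sigma_{1,p}\|\iota^*\w\|^2_{\partial M}=\int_M |d\w|^2+|\delta\w|^2$ in the Raulot--Savo sense. Then I will test it against $\mu_{1,p}$, using $\mu_{1,p}\|\w\|^2_M\le \|d\w\|^2+\|\delta\w\|^2=\sigma_{1,p}\|\w\|^2_{\partial M}$, after adjusting the boundary conditions appropriately.

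The key step is the Pohozaev identity. I will integrate $\operatorname{div}(|\w|^2 X)=2\langle\nabla_X\w,\w\rangle+|\w|^2\Delta_g\rho_{x_0}$ over $M$. On $\partial M$, $\langle X,\nu_{\text{out}}\rangle=h\ge h_{\min}$, which gives
\[
h_{\min}\|\w\|^2_{L^2(\partial M)}\le 2\Big|\int_M\langle\nabla_X\w,\w\rangle\Big|+\int_M|\w|^2\,\Delta_g\rho_{x_0}.
\]
Laplacian comparison under $\ric_g\ge(n-1)\kappa$ gives $\Delta_g\rho_{x_0}\le 1+d_{x_0}H_\kappa(d_{x_0})$, with $H_\kappa$ the mean curvature of the model sphere, so the second term is at most $C\|\w\|^2_M$. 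For the first term, Cauchy--Schwarz and $|X|=d_{x_0}\le r_{\max}$ give $2r_{\max}\|\w\|_M\|\nabla\w\|_M$. I then need $\|\nabla\w\|^2\le\|d\w\|^2+\|\delta\w\|^2$. This is where $W^{[p]}\ge0$ and $S^{[p]}\ge0$ enter, through the Bochner--Reilly formula of Raulot--Savo: the curvature term and the boundary shape term have good signs, so they can be dropped. Finally, $\|\w\|^2_M\le\mu_{1,p}^{-1}(\|d\w\|^2+\|\delta\w\|^2)=\mu_{1,p}^{-1}\sigma_{1,p}\|\w\|^2_{\partial M}$. Substituting and writing $t=\|d\w\|^2+\|\delta\w\|^2$ gives
\[
h_{\min}\le \frac{\sigma_{1,p}}{\mu_{1,p}}\big(2r_{\max}\mu_{1,p}^{1/2}+C\big),
\]
which is the claimed inequality. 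Strictness should follow because equality in Cauchy--Schwarz would force $\nabla_X\w$ parallel to $\w$, which is incompatible with the boundary conditions, or because the comparison is not sharp.

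The main obstacle is the Reilly step. The Reilly formula for forms contains boundary terms involving $\iota^*\w$, $\nu\lrcorner\w$ and their derivatives, together with $S^{[p]}$ and $S^{[n-p]}$-type terms. To conclude $\|\nabla\w\|^2\le \|d\w\|^2+\|\delta\w\|^2$ with only $S^{[p]}\ge0$, I have to choose the Steklov problem (absolute-type, $\nu\lrcorner\w=0$) so that only the $S^{[p]}$ term survives. I also need to check that $\w$ is admissible for $\mu_{1,p}$. The hypothesis $p\ge1$ will be used to exclude constants, which would otherwise make $\mu_{1,0}=0$.
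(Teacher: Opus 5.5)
Your final argument is exactly the paper's proof, and your closing algebra is correct. The argument: take a first Steklov eigenform $u$, so that $\Delta u=0$, $\nu\lrcorner u=0$ and $\|du\|^2_{L^2(M)}+\|\delta u\|^2_{L^2(M)}=\sigma_{1,p}\|u\|^2_{L^2(\partial M)}$. Then apply the divergence identity for $|u|^2\nabla\rho_{x_0}$, Laplacian comparison, Cauchy--Schwarz with $|\nabla\rho_{x_0}|=d_{x_0}\le r_{\max}$, Reilly's formula with $W^{[p]}\ge0$ and $S^{[p]}\ge0$, and finally the Neumann quotient. Two steps, however, are left open.

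\emph{Admissibility for $\mu_{1,p}$.} The characterization of $\mu_{1,p}$ requires the test form to be $L^2(M)$-orthogonal to $H_A^p(M)$. A Steklov eigenform is only orthogonal to $H_A^p(M)$ in $L^2(\partial M)$. Your remark that ``$p\ge1$ excludes constants'' does not address this, since for $p\ge1$ the space $H_A^p(M)\cong H^p(M;\mathbb{R})$ is governed by topology, not by the degree.

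The paper handles this by replacing $u$ with $\omega=u-u_0$, where $u_0$ is the $L^2(M)$-orthogonal projection of $u$ onto $H_A^p(M)$. Because $u_0$ is closed, coclosed and satisfies $\nu\lrcorner u_0=0$, one has $d\omega=du$, $\delta\omega=\delta u$ and $\nu\lrcorner\omega=0$. Moreover, since $(u,u_0)_{L^2(\partial M)}=0$,
$\|\omega\|^2_{L^2(\partial M)}=\|u\|^2_{L^2(\partial M)}+\|u_0\|^2_{L^2(\partial M)}\ge\|u\|^2_{L^2(\partial M)}$, which is the right direction. Running your estimate on $\omega$ and then passing to $u$ gives the bound.

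Alternatively, star-shapedness makes $M$ contractible, so $H_A^p(M)\cong H^p(M;\mathbb{R})=0$ for $p\ge1$, and you may use $u$ directly. That is presumably what your remark about $p\ge1$ should have said, but it has to be stated and justified.

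\emph{Strictness.} Neither ``$\nabla_X\omega$ parallel to $\omega$, incompatible with the boundary conditions'' nor ``the comparison is not sharp'' is an argument. Here is what works.

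Suppose equality held. Then every inequality in your chain is an equality. The bound $\|\omega\|^2_{L^2(M)}\le\mu_{1,p}^{-1}(\|d\omega\|^2+\|\delta\omega\|^2)$ enters with the positive coefficients $r_{\max}$ and $\max(1+d_{x_0}H_\kappa\circ d_{x_0})\,(\ge n)$, so it must itself be an equality. Hence $\omega$ minimizes the Neumann quotient and is a $\mu_{1,p}$-eigenform.

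But $\omega$ is also harmonic, since $\Delta u=0$ and $\Delta u_0=0$. Therefore $\mu_{1,p}\,\omega=\Delta\omega=0$, and since $\mu_{1,p}>0$ this gives $\omega=0$. That contradicts $\|d\omega\|^2+\|\delta\omega\|^2=\sigma_{1,p}\|u\|^2_{L^2(\partial M)}>0$. This is the paper's argument (``$u$ would be both a Steklov and a Neumann eigenform'').
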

    
     \begin{thm}\label{thm:ineg2}Let $(M^n,g)$ be a compact Riemannian manifold with a smooth boundary $\partial M$. Assume that $M$ is star-shaped with respect to $x_0$ and that $\kappa_1\leq K_g \leq \kappa_2$ where $K_g$ is the sectional curvature, then for all $k\in \mathbb{N}$, $p\geq 1$, ${x_0} \in M$ there exists a constant $C_2=C_2(p,n,\kappa_1,\kappa_2)$ such that:  

\begin{equation}\label{eq:th1.4(i,2)}
     \lambda_{k,p}\leq \frac{4\mathbf{q}^2_{k,p}r^2_{\max}+2\mathbf{q}_{k,p}h_{\min} C_2}{h^2_{\min}}.
\end{equation}
\end{thm}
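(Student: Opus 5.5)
The plan is to follow the scalar argument of \thref{thm:ineqScalaire2}: test the Dirichlet Rayleigh quotient with BSD2 eigenforms, and control their boundary normal derivative through a Rellich--Pohozaev identity for the radial field $X=\nabla\rho_{x_0}$. I will use two variational characterizations. The Dirichlet eigenvalues satisfy
\[
\lambda_{k,p}=\min_{V_k}\max_{\w\in V_k\setminus\{0\}}\frac{\|d\w\|^2_{L^2(M)}+\|\delta\w\|^2_{L^2(M)}}{\|\w\|^2_{L^2(M)}},
\]
where $V_k$ ranges over $k$-dimensional spaces of $p$-forms vanishing on $\partial M$. For BSD2 I expect (to be proved in \secref{sec:BSD2}, analogously to \eqref{eq:q_1Standard}) the min-max of $\|\Delta\w\|^2_{L^2(M)}/\|\nu\lrcorner d\w\|^2_{L^2(\partial M)}$ over forms with $\w_{|\partial M}=0$ and $\iota^*\delta\w=0$.

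\textbf{Step 1: test space.} Let $\w_1,\dots,\w_k$ be BSD2 eigenforms for $\mathbf{q}_{1,p},\dots,\mathbf{q}_{k,p}$, chosen orthogonal for both quadratic forms $(\Delta\cdot,\Delta\cdot)_{L^2(M)}$ and $(\nu\lrcorner d\cdot,\nu\lrcorner d\cdot)_{L^2(\partial M)}$. Then every $\w$ in $V=\spann(\w_1,\dots,\w_k)$ satisfies $\w_{|\partial M}=0$, $\iota^*\delta\w=0$, and
\[
\|\Delta\w\|^2\le \mathbf{q}_{k,p}\|\nu\lrcorner d\w\|^2_{\partial M}.
\]
Since $\w$ vanishes on $\partial M$, the tangential derivatives vanish there, so $\nabla_\nu\w$ is determined by $\nu\lrcorner d\w$ and $\iota^*\delta\w$. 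The condition $\iota^*\delta\w=0$ then gives $|\nabla_\nu\w|^2=|\nu\lrcorner d\w|^2$ on $\partial M$.

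\textbf{Step 2: comparing Rayleigh quotients.} Set $D(\w)=\|d\w\|^2+\|\delta\w\|^2$. Green's formula with $\w_{|\partial M}=0$ gives $D(\w)=(\Delta\w,\w)\le\|\Delta\w\|\,\|\w\|$. Hence
\[
\frac{D(\w)}{\|\w\|^2}\le \frac{\|\Delta\w\|^2}{D(\w)}=:Y^2 ,
\]
and it suffices to bound $Y^2$ uniformly on $V$.

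\textbf{Step 3: Rellich identity (main obstacle).} Compute $\int_M\langle\Delta\w,\nabla_X\w\rangle$ using the Bochner formula $\Delta=\nabla^*\nabla+W^{[p]}$. Integrating by parts twice, and using $\w=0$ on $\partial M$, the boundary term becomes $\frac12\int_{\partial M}\langle X,\nu\rangle|\nabla_\nu\w|^2$ up to sign, i.e. $\frac12\int_{\partial M}h|\nabla_\nu\w|^2$. The interior terms are of three kinds:
\begin{itemize}
\item Hessian terms $\int\langle\nabla_{\nabla_{e_i}X}\w,\nabla_{e_i}\w\rangle$ and $\int \dive X\,|\nabla\w|^2$;
\item commutator curvature terms $\int\langle R(e_i,X)\w,\nabla_{e_i}\w\rangle$;
\item $\int\langle W^{[p]}\w,\nabla_X\w\rangle$.
\end{itemize}
Hessian comparison under $\kappa_1\le K_g\le\kappa_2$ bounds $\mathrm{Hess}\,\rho_{x_0}$ by constants depending on $n,\kappa_1,\kappa_2$ and $r_{\max}$. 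Here the star-shapedness is used to guarantee that $\rho_{x_0}$ is smooth on $M$. The curvature terms are bounded by $|X|\le r_{\max}$ and curvature constants times $\|\w\|\,\|\nabla\w\|$. One then uses $\|\nabla\w\|^2\le D(\w)$ (the Gaffney/Bochner inequality for Dirichlet forms, with curvature correction) and $\|\w\|^2\le D(\w)/\lambda_{1,p}$; alternatively the curvature can be absorbed into the constant. The target estimate is
\[
h_{\min}\|\nu\lrcorner d\w\|^2_{\partial M}\le 2r_{\max}\|\Delta\w\|\,D(\w)^{1/2}+C_2\,D(\w),
\]
with $C_2=C_2(p,n,\kappa_1,\kappa_2)$. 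Keeping every curvature contribution on the form side bounded by a multiple of $D(\w)$ is the delicate part.

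\textbf{Step 4: conclusion.} Combining Steps 1 and 3 gives
\[
h_{\min}Y^2\le \mathbf{q}_{k,p}\,(2r_{\max}Y+C_2).
\]
Young's inequality gives $2r_{\max}\mathbf{q}_{k,p}Y\le \frac{h_{\min}}2Y^2+\frac{2r_{\max}^2\mathbf{q}_{k,p}^2}{h_{\min}}$. Substituting and absorbing the $Y^2$ term yields
\[
Y^2\le\frac{4\mathbf{q}_{k,p}^2r_{\max}^2+2\mathbf{q}_{k,p}h_{\min}C_2}{h_{\min}^2}.
\]
This holds for every $\w\in V$, so the min-max characterization of $\lambda_{k,p}$ gives \eqref{eq:th1.4(i,2)}.
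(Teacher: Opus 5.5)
Your Steps 1, 2 and 4 are fine. Step 2 correctly recovers the quotient of \eqref{eq:vpKAlternativeDirichletForme} from $D(\omega)=(\Delta\omega,\omega)\le\|\Delta\omega\|\,\|\omega\|$. Young's inequality in Step 4 is a clean substitute for the paper's discriminant computation.

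\textbf{The gap is Step 3.} It carries the whole proof, and you leave it as ``the delicate part''. With the identity you chose, it does not go through under the hypotheses of the statement.

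Pairing $\Delta\omega$ with $\nabla_X\omega$ puts the full covariant derivative everywhere. The left-hand side is only bounded by $r_{\max}\|\Delta\omega\|\,\|\nabla\omega\|$. The Hessian and divergence terms are quadratic in $\nabla\omega$. Commuting $\nabla_{e_i}$ with $\nabla_X$ and using $\Delta=\nabla^*\nabla+W^{[p]}$ also creates $\int_M\langle R(e_i,X)\omega,\nabla_{e_i}\omega\rangle$ and $\int_M\langle W^{[p]}\omega,\nabla_X\omega\rangle$.

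For $\omega_{|\partial M}=0$, the Bochner--Reilly formula gives exactly $\|\nabla\omega\|^2=D(\omega)-\int_M\langle W^{[p]}\omega,\omega\rangle$. So $\|\nabla\omega\|^2\le D(\omega)$ needs $W^{[p]}\ge 0$, which is \emph{not} assumed here (unlike \thref{thm:ineg1}). Every curvature correction then involves $\|\omega\|$, through terms like $r_{\max}\|\omega\|\,\|\nabla\omega\|$ and $\|\omega\|^2$ times curvature bounds.

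The only way to control $\|\omega\|$ by $D(\omega)^{1/2}$ is a Poincar\'e constant such as $\lambda_{1,p}^{-1/2}$. That has two consequences:
\begin{itemize}
\item $\lambda_{1,p}$ enters $C_2$, so $C_2$ no longer depends only on the curvature data.
\item More seriously, the coefficient $2r_{\max}$ of $\|\Delta\omega\|D(\omega)^{1/2}$ is multiplied by a factor $\bigl(1+c/\lambda_{1,p}\bigr)^{1/2}>1$. That coefficient must be exactly $2r_{\max}$ to produce the term $4\mathbf{q}_{k,p}^2r_{\max}^2$ in the conclusion.
\end{itemize}
In flat space all these terms vanish and your identity does work. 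That is why the Euclidean intuition is misleading here.

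\textbf{The missing ingredient} is the curvature-free Rellich identity \eqref{Rellich_forms} of \thref{theo:Rellich}. Instead of $\nabla_X\omega$, pair $\Delta\omega$ with $X\lrcorner d\omega+X\wedge\delta\omega$, i.e.\ consider $\int_M\langle\Delta\omega,X\lrcorner d\omega\rangle+\int_M\langle\delta\omega,X\lrcorner\Delta\omega\rangle$.

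\begin{itemize}
\item \emph{No curvature terms.} Since $d(X\lrcorner d\omega)=\mathcal{L}_X d\omega$ and $\mathcal{L}_X=\nabla_X+T_X$ (and similarly for the $\delta\omega$ part), you never commute covariant derivatives. The curvature of $M$ enters only through $\nabla^2\rho_{x_0}$.
\item \emph{Interior terms.} For $X=\nabla\rho_{x_0}$ one has $dX=0$. The interior terms are $\frac12\int_M(|d\omega|^2+|\delta\omega|^2)\Delta\rho_{x_0}+\int_M\langle T_X^{[p+1]}d\omega,d\omega\rangle+\int_M\langle T_X^{[p-1]}\delta\omega,\delta\omega\rangle$. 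Each is bounded pointwise by Laplacian and Hessian comparison (\thref{thm:laplace}, \thref{thm:hess}) times $|d\omega|^2+|\delta\omega|^2$, which gives $C_2D(\omega)$ directly.
\item \emph{Left-hand side.} Since $X\lrcorner d\omega\perp X\wedge\delta\omega$ and $|X|=d_{x_0}\le r_{\max}$, it is bounded in absolute value by $r_{\max}\|\Delta\omega\|D(\omega)^{1/2}$, with no loss.
\item \emph{Boundary terms.} On $\partial M$, $\omega=0$ gives $d\omega=\nu\wedge\nabla_\nu\omega$ and $\delta\omega=-\nu\lrcorner\nabla_\nu\omega$. So $\iota^*\delta\omega=0$ forces $\delta\omega=0$ and $\nu\lrcorner d\omega=\nabla_\nu\omega$ there. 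The four boundary terms collapse to $\frac12\int_{\partial M}h\,|\nabla_\nu\omega|^2$.
\end{itemize}
This yields exactly your target estimate, after which your Step 4 concludes.
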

Extending the proof of the first inequality of \thref{thm:ineqScalaire2} yields a negative constant $C_1$ for $p\geq 1$. However, the expected result would be as follows:
\begin{conjecture}
    Let $(M^n,g)$ be a compact Riemannian manifold, whose sectional curvature $K_g$ satisfies $\kappa_1\leq K_g \leq \kappa_2$. Assume that $M$ is star-shaped with respect to $x_0\in M$ then, for all $k\in\mathbb{N}$, $p\geq 1$, there exists a constant $C_1= C_1(p,n,\kappa_1,\kappa_2)>0$ such that:

\begin{equation}\label{eq:th1.4(i,1)}
 \qquad    \lambda_{k,p}\geq \frac{C_1 q_{{m_k},p}}{r_{\max}+h_{\max}} . 
\end{equation}
Here $m_k$ is the multiplicity of $\lambda_{k,p}$.
\end{conjecture}
This inequality could be of interest when $m$ is sufficiently large compared with $k.$ To show inequality \eqref{eq:sigma1>mu1/mu_1}, we need the variational characterizations for the eigenvalues of the Steklov and Neumann problems on differential forms, see \secref{sec:boundarypbs}. For inequality \eqref{eq:th1.4(i,2)}, we use the variational characterizations of the Dirichlet problem (see \secref{sec:boundarypbs}) and the new biharmonic Steklov problem with Dirichlet boundary conditions (BSD2). It is not clear whether an analogue of \eqref{eq:th1.4(i,2)} holds for (BSD1).

\textbf{Remark:}
    We are currently studying the spectrum of the Neumann, BSN and BSD problems on differential forms on the unit ball, in order to check whether the inequalities established both in this paper and in \cite{Article1} are optimal.

\textbf{Outline of the Article:} In \secref{sec:Importantthandtools} of this paper, we recall some definitions and ingredients that we use, such as integration by parts formulas and comparison theorems. In section \secref{sec:Rellich} we establish the Rellich identity for differential forms. In \secref{sec:BSD2} we introduce the new biharmonic Steklov problem with Dirichlet boundary conditions on differential forms. Finally, in section \secref{sec:eigenvalueEstimates}, we show \thref{thm:ineg1} and \thref{thm:ineg2}.

\textbf{Acknowledgment:} I would like to thank my PhD supervisors, Nicolas Ginoux, Georges Habib and Samuel Tapie, for their help, support, and the valuable discussions we have had. I am also thankful to Fida El Chami and Georges Habib for providing me with the generalized Rellich identity for differential forms \eqref{Rellich_forms} (an important tool to establish the eigenvalue estimates we want), which they had previously developed in an unpublished work \cite{RellichFormesDiff}. I also acknowledge the support of the Grant ANR-24-CE40-0702 ORBISCAR.

\section{Preliminaries}\label{sec:Importantthandtools}
In this section, we provide some results and tools that are used in our work. The results of \secref{sec:diffForms} and \secref{sec:boundarypbs} are already in \cite{Article1}.

\subsection{Review on differential forms}\label{sec:diffForms}
    For differential forms, we denote by $\langle.,.\rangle$ the natural inner product induced by the Riemannian metric and by $d\mu_g$ the Riemannian volume form. We define $(\w,\w'):=\int_M\langle\w,\w'\rangle d\mu_g.$
    The codifferential operator $\delta$, is the formal adjoint of the exterior derivative $d$, i.e., for all $\alpha\in \Omega^p(M)$ and $\beta\in \Omega^{p+1}(M)$ with support in $M\setminus \partial M$, we have  $(d\alpha, \beta) = (\alpha, \delta\beta).$ Throughout this paper, we identify the tangent space $TM$ with its cotangent space $T^*M$ via the musical isomorphism. That is, for a vector field $X$, we denote by $X^\flat = g(X, \cdot)$ the corresponding $1$-form. Given a local orthonormal basis  $(e_i)_{i=1}^n$, we have the local expressions of $d=\sum_i e_i^\flat\wedge\nabla_{e_i}$ and $\delta=-\sum_ie_i\lrcorner\nabla_{e_i}$, see for instance \cite[Proposition 2.61]{GallotHulinLafontaine}. The Hodge-de-Rham Laplacian is the operator $\Delta$ acting on $\Omega^p(M)$, defined by
    \[\Delta  = d\delta + \delta d.\] 
We recall the following integration by parts formulas that are essential in our work: \begin{prop}\label{prop:IPP}
    For $\w \in \Omega^p(M)$ and $\w' \in \Omega^{p+1}(M)$, we have 
    \begin{equation}\label{eq:IPP}
        \int_M \langle d\w, \w' \rangle d\mu_g = \int_M \langle \w, \delta\w' \rangle d\mu_g -\int_{\partial M} \langle \iota^* \w, \nu \lrcorner \w' \rangle d\mu_g.
    \end{equation}
For all $\w,\w'\in \Omega^p(M)$ we have
    \begin{equation}\label{eq:IPP3'}
    \begin{aligned}
    \int_M \langle \Delta \w, \w' \rangle \, d\mu_g &= \int_M \langle d\w, d\w' \rangle \, d\mu_g + \int_M \langle \delta \w, \delta \w' \rangle \, d\mu_g \\
    &\quad + \int_{\partial M} \langle \nu \lrcorner d\w, \iota^* \w' \rangle \, d\mu_g - \int_{\partial M} \langle \iota^* \delta \w, \nu \lrcorner \w' \rangle \, d\mu_g.
    \end{aligned}
    \end{equation}
    \end{prop}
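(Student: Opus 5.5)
The statement to prove is \propref{prop:IPP}, i.e.\ the two integration by parts formulas \eqref{eq:IPP} and \eqref{eq:IPP3'}. I would derive both from Stokes' theorem applied to a suitable $(n-1)$-form, or equivalently from the divergence theorem for a vector field, keeping careful track of the sign coming from $\nu$ being the \emph{inward} unit normal.

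\textbf{Step 1: the formula \eqref{eq:IPP}.}
For $\w\in\Omega^p(M)$ and $\w'\in\Omega^{p+1}(M)$, define the vector field $X$ by
\[
\langle X, Y\rangle := \langle \w, Y\lrcorner \w'\rangle \qquad \text{for all } Y.
\]
In a local orthonormal frame $(e_i)$ that is synchronous at the point considered, I compute
\[
\dive X=\sum_i e_i\langle \w, e_i\lrcorner \w'\rangle
=\sum_i\langle \nabla_{e_i}\w, e_i\lrcorner\w'\rangle+\sum_i\langle \w, e_i\lrcorner\nabla_{e_i}\w'\rangle .
\]
Since interior product by $e_i$ is adjoint to $e_i^\flat\wedge$, the first sum equals $\langle \sum_i e_i^\flat\wedge\nabla_{e_i}\w, \w'\rangle=\langle d\w,\w'\rangle$. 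Using $\delta=-\sum_i e_i\lrcorner\nabla_{e_i}$, the second sum equals $-\langle \w,\delta\w'\rangle$. Hence
\[
\dive X=\langle d\w,\w'\rangle-\langle\w,\delta\w'\rangle .
\]
The divergence theorem with the inward normal gives $\int_M \dive X\,d\mu_g=-\int_{\partial M}\langle X,\nu\rangle\,d\mu_g$. On the boundary, $\langle X,\nu\rangle=\langle \w,\nu\lrcorner\w'\rangle$. Since $\nu\lrcorner\w'$ is tangential, only the tangential part of $\w$ contributes, so this equals $\langle \iota^*\w,\nu\lrcorner\w'\rangle$. Substituting yields \eqref{eq:IPP}.

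\textbf{Step 2: the formula \eqref{eq:IPP3'}.}
Write $\langle\Delta\w,\w'\rangle=\langle d\delta\w,\w'\rangle+\langle\delta d\w,\w'\rangle$ and treat the two terms separately.

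For the first term, apply \eqref{eq:IPP} to the pair $(\delta\w,\w')$, with $\delta\w$ of degree $p-1$:
\[
\int_M\langle d\delta\w,\w'\rangle=\int_M\langle\delta\w,\delta\w'\rangle-\int_{\partial M}\langle\iota^*\delta\w,\nu\lrcorner\w'\rangle .
\]

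For the second term, apply \eqref{eq:IPP} to the pair $(\w',d\w)$, with $d\w$ of degree $p+1$:
\[
\int_M\langle d\w',d\w\rangle=\int_M\langle\w',\delta d\w\rangle-\int_{\partial M}\langle\iota^*\w',\nu\lrcorner d\w\rangle .
\]
Rearranging gives $\int_M\langle \delta d\w,\w'\rangle=\int_M\langle d\w,d\w'\rangle+\int_{\partial M}\langle\nu\lrcorner d\w,\iota^*\w'\rangle$.

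Adding the two identities gives \eqref{eq:IPP3'}, with the boundary signs exactly as stated.

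\textbf{Main point requiring care.}
The only delicate issue is the sign convention. One must check that, with $\nu$ inward, the boundary term in Step 1 is $-\langle\iota^*\w,\nu\lrcorner\w'\rangle$ and not $+$. One must also justify replacing $\w$ by $\iota^*\w$ in the boundary pairing, which holds because $\nu\lrcorner\w'$ has no normal component. Once these are settled, the rest of the argument is routine.
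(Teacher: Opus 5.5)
Your proof is correct and has the same structure as the paper's. The paper cites Taylor for \eqref{eq:IPP} and notes that \eqref{eq:IPP3'} follows from it. You supply the standard divergence-theorem derivation of \eqref{eq:IPP}, with the right sign for the inward normal $\nu$ and a valid justification for replacing $\omega$ by $\iota^*\omega$ on the boundary. You then get \eqref{eq:IPP3'} exactly as intended, by applying \eqref{eq:IPP} to the pairs $(\delta\omega,\omega')$ and $(\omega',d\omega)$.
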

    Identity \eqref{eq:IPP} is shown in \cite[p.182]{taylor}. It implies \eqref{eq:IPP3'}, as shown in \cite[Equation $30$]{BesselFidaGeorgeNicolas} and \cite[p.421]{taylor}.
   In particular, for any $\w, \w' \in \Omega^p(M)$, we deduce \cite[p.182]{taylor}
    \begin{equation}\label{eq:IPP1}
    \begin{split}
        & \int_M \Big( \langle \Delta \w , \w' \rangle -  \langle \w , \Delta \w' \rangle \Big) d\mu_g \\
        = & \int_{\partial M} \Big( \langle \nu \lrcorner d \w, \iota^* \w' \rangle - \langle \iota^* \w, \nu \lrcorner d \w' \rangle + \langle \nu \lrcorner \w, \iota^* \delta \w' \rangle - \langle \iota^* \delta \w, \nu \lrcorner \w' \rangle \Big) d\mu_g.
    \end{split}
    \end{equation}

Replacing $\w$ with $\Delta \w$ in \eqref{eq:IPP1}, we obtain:
    \begin{equation}\label{eq:IPP2}
    \begin{split}
        & \int_M \langle \Delta^2 \w , \w' \rangle d\mu_g = \int_M \langle \Delta \w , \Delta \w' \rangle d\mu_g \\
        & + \int_{\partial M} \Big( \langle \nu \lrcorner d \Delta \w, \iota^* \w' \rangle - \langle \iota^* \Delta \w, \nu \lrcorner d \w' \rangle + \langle \nu \lrcorner \Delta \w, \iota^* \delta \w' \rangle - \langle \iota^* \delta \Delta \w, \nu \lrcorner \w' \rangle \Big) d\mu_g.
    \end{split}
    \end{equation}
        In addition, for $\w = \w'$ in \eqref{eq:IPP3'}, we deduce the following formula:
    \begin{equation}\label{eq:IPP3}
    \begin{aligned}
       \int_M \langle \Delta \w, \w \rangle \, d\mu_g &= \int_M \left( \lvert d\w \rvert^2 + \lvert \delta\w \rvert^2 \right) d\mu_g + \int_{\partial M} \langle \nu \lrcorner d\w, \iota^* \w \rangle \, d\mu_g \\
       &\quad - \int_{\partial M} \langle \iota^* \delta \w , \nu \lrcorner \w \rangle \, d\mu_g.
    \end{aligned}
    \end{equation}
Note that, in \cite{taylor}, the identities are given using $ \w $ on the boundary rather than  $\iota^*\w$, as we do in this paper. This difference is because the normal component is always zero in our three BSN problems.

\subsection{Boundary problems and variational characterizations}\label{sec:boundarypbs}
In this section, we recall the classical boundary problems and present the variational characterizations of their eigenvalues.
\begin{definition}
Let $(M^n,g)$ be a compact connected Riemannian manifold with boundary $\partial M$. The Dirichlet problem acting on differential forms is defined as follows:
\begin{equation}\label{eq:dirichletFormes}
 \begin{cases}
    \Delta \w = \lambda \w & \text{on } M \\
    \w = 0 & \text{on } \partial M,
\end{cases}
\end{equation}
where $\lambda \in \mathbb{R}$. The Neumann problem for differential forms (with absolute boundary conditions) is defined by
\begin{equation}\label{eq:neumannFormesAbsolue}
 \begin{cases}
    \Delta \w = \mu \w & \text{on } M \\
    \nu \lrcorner \w = 0 & \text{on } \partial M \\
    \nu \lrcorner d\w = 0 & \text{on } \partial M,
\end{cases} 
\end{equation}
where $\mu\in \mathbb{R}$. \end{definition}  The spectrum of the Laplacian for the Dirichlet and Neumann problems on forms is discrete, consisting respectively of eigenvalues (see \cite{GueriniThese})
\[0<\lambda_{1,p} \leq \lambda_{2,p} \leq \ldots\quad \text{and}\quad 0\leq\mu_{1,p} \leq \mu_{2,p} \leq \ldots .\]

Given any $p$-form $\w$ on the boundary, it is shown in \cite{RaulotSavo} that there exists a unique $p$-form $\hat{\w}$ on $M$ solving the following problem:
\begin{equation}\label{eq:steklovFormes}
 \begin{cases}
    \Delta \hat{\w} = 0 & \text{on } M \\
    \iota^* \hat{\w} = \w & \text{on } \partial M \\
    \nu \lrcorner \hat{\w} = 0 & \text{on } \partial M.
\end{cases}
\end{equation}
The differential form $\hat{\w}$ is called the tangential harmonic extension of $\w$. 

The kernel of the Dirichlet problem is trivial, while the kernels of the Neumann and Steklov problems are given by the absolute de Rham cohomology \begin{equation}\label{eq:Hap}
        H_A^p(M) = \left\{ \w \in \Omega^p(M) \mid d\w = 0 \text{ on } M, \ \delta\w = 0 \text{ on } M, \ \nu \lrcorner \w = 0 \text{ on } \partial M \right\}.
\end{equation}
\begin{definition}\label{def:steklov}
The Steklov operator $T^{[p]}: \Lambda^p(\partial M) \to \Lambda^p(\partial M)$ is defined by $T^{[p]}(\w) = -\nu \lrcorner d\hat{\w}$.\end{definition}
It is proved in \cite[Section 2]{RaulotSavo}, that the operator $T^{[p]}$ is an elliptic, self-adjoint and pseudodifferential operator with a discrete spectrum consisting of eigenvalues
\[
0 \leq \sigma_{1,p} \leq \sigma_{2,p} \leq \ldots .
\]

 We provide some of the variational characterizations of the eigenvalues of the previous problems on differential forms, derived from the min-max principle (see \cite[Chapter 1]{Chavel}), that we will use later on. Several of these expressions can also be found in \cite{BesselFidaGeorgeNicolas,GueriniThese, Karpukhin, RaulotSavo}. The proofs primarily follow the same lines as those used for the scalar case and we omit them here.

 Let us set $H_N^1(M):=\{\w\in H^1(M)\ | \ \nu\lrcorner\w=0\}$ and recall that the absolute de Rham cohomology of order $p$, $H_A^p(M)$ is given by \eqref{eq:Hap}.
       
\begin{prop}\label{prop:vpDirichletFormes2}
The $k\textsuperscript{th}$ eigenvalue $\lambda_{k,p}$ of the problem \eqref{eq:dirichletFormes} is given by
   \begin{align}
\lambda_{k,p}&=\underset{\substack{V\subset{H}^1_0(M)\\ \dim(V)=k}}{\inf}\ \underset{\substack{0\neq \w \in V }}{\sup} \frac{\lVert d\w \rVert^2_{L^2(M)} + \lVert \delta \w \rVert^2_{L^2(M)}}{\lVert \w \rVert^2_{L^2( M)}}\label{eq:vpKDirichletForme} \\ 
&=\underset{\substack{V\subset{H}^2(M)\cap H_0^1(M)\\ \dim(V)=k}}{\inf}\ \underset{\substack{0\neq \nabla\w \in V }}{\sup} \frac{\lVert \Delta\w \rVert^2_{L^2( M)}}{\lVert d\w \rVert^2_{L^2(M)} + \lVert \delta \w \rVert^2_{L^2(M)}}.\label{eq:vpKAlternativeDirichletForme}
\end{align}

 In addition, the first non-zero eigenvalue $\mu_{1,p}$ of the problem \eqref{eq:neumannFormesAbsolue} is given by
\begin{align}\label{eq:vpNeumannAbsolueForme}
\mu_{1,p} &= \inf\left\{\frac{\lVert d\w \rVert^2_{L^2(M)} + \lVert \delta \w \rVert^2_{L^2(M)}}{\lVert \w \rVert^2_{L^2( M)}}\ \Big|\ \w\in H_N^1(M)\ \text{and}\ \w\perp_{L^2(M)}H_A^p(M)\right\}.
\end{align}
 Its $k\textsuperscript{th}$ non-zero eigenvalue $\mu_{k,p}$ is given by 
   \begin{align}
      \mu_{k,p}= \underset{\substack{ V\subset H_N^1(M)\\ \dim(V)=k+\dim H_A^p(M)}}{\inf}\ \underset{\substack{0\neq \w \in V }}{\sup} \frac{\lVert d\w \rVert^2_{L^2(M)} + \lVert \delta \w \rVert^2_{L^2(M)}}{\lVert \w \rVert^2_{L^2( M)}}\label{eq:vp1AlternativeNeumannAbsolueForme1}. 
   \end{align}
Moreover, the first non-zero eigenvalue of the Steklov problem on differential forms is given by
   \begin{align}\label{eq:sigma1formesSteklov}
\sigma_{1,p} &=\inf\left\{\frac{\lVert d\w \rVert^2_{L^2(M)} + \lVert \delta \w \rVert^2_{L^2(M)}}{\lVert \w \rVert^2_{L^2( \partial M)}}\ \Big|\ {\w}\in H_N^1(M)\ \text{and}\ \w\perp_{L^2(\partial M)}H_A^p(M)\right\}. 
\end{align}
\end{prop}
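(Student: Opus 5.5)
All the characterizations in \propref{prop:vpDirichletFormes2} will come from the min-max principle \cite[Chapter 1]{Chavel} applied to a nonnegative self-adjoint operator with compact resolvent. For each problem, the plan is to identify its closed quadratic form via the Green formula \eqref{eq:IPP3}.

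\textbf{Dirichlet problem, \eqref{eq:vpKDirichletForme}.} On $H_0^1(M)$ we have $\iota^*\w=0$ and $\nu\lrcorner\w=0$, so both boundary integrals in \eqref{eq:IPP3} vanish. Hence $(\Delta\w,\w)=\|d\w\|^2+\|\delta\w\|^2$ for smooth $\w$ vanishing on $\partial M$.
\begin{itemize}
\item Gaffney's inequality gives $\|\w\|_{H^1}^2\le C(\|d\w\|^2+\|\delta\w\|^2+\|\w\|^2)$ on $H_0^1(M)$. So the form $Q(\w)=\|d\w\|^2+\|\delta\w\|^2$ is closed, and its Friedrichs operator is the Dirichlet Laplacian \eqref{eq:dirichletFormes}.
\item Rellich's compact embedding $H^1\hookrightarrow L^2$ gives a compact resolvent, hence an $L^2$-orthonormal eigenbasis $(\w_j)$ with $\Delta\w_j=\lambda_{j,p}\w_j$.
\item Injectivity of the Dirichlet problem gives $\lambda_{1,p}>0$.
\end{itemize}
The min-max principle then yields \eqref{eq:vpKDirichletForme} directly.

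\textbf{Dirichlet problem, \eqref{eq:vpKAlternativeDirichletForme}.} Take $\w\in H^2(M)\cap H_0^1(M)$ and set $c_j=(\w,\w_j)$. Since $\w$ and $\w_j$ both vanish on $\partial M$, every boundary term in \eqref{eq:IPP1} vanishes, so $(\Delta\w,\w_j)=\lambda_{j,p}c_j$. By Parseval,
\[
\|\Delta\w\|^2=\sum_j\lambda_{j,p}^2c_j^2,\qquad \|d\w\|^2+\|\delta\w\|^2=(\Delta\w,\w)=\sum_j\lambda_{j,p}c_j^2 .
\]
\begin{itemize}
\item \emph{Upper bound.} Taking $V=\spann(\w_1,\dots,\w_k)$ gives $\sup_{V}\le\lambda_{k,p}$.
\item \emph{Lower bound.} Any $k$-dimensional $V$ contains a nonzero $\w$ with $c_1=\dots=c_{k-1}=0$. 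For it the quotient is at least $\lambda_{k,p}$, since $\sum\lambda_j^2c_j^2\ge\lambda_k\sum\lambda_jc_j^2$ when only $j\ge k$ contribute.
\end{itemize}
The nondegeneracy condition in the statement just excludes the zero form, because $\w\in H_0^1(M)$ with $\nabla\w=0$ forces $\w=0$.

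\textbf{Neumann problem, \eqref{eq:vpNeumannAbsolueForme} and \eqref{eq:vp1AlternativeNeumannAbsolueForme1}.} Use the same form $Q$ on $H_N^1(M)$.
\begin{itemize}
\item For $\w$ smooth with $\nu\lrcorner\w=0$ and $\nu\lrcorner d\w=0$, the boundary terms in \eqref{eq:IPP3} vanish. The condition $\nu\lrcorner d\w=0$ is the natural boundary condition: a minimizer $\w$ of $Q$ on $H_N^1(M)$ satisfies $\Delta\w=\mu\w$, and \eqref{eq:IPP3'} forces $\int_{\partial M}\langle\nu\lrcorner d\w,\iota^*\w'\rangle=0$ for all $\w'$, hence $\nu\lrcorner d\w=0$.
\item Gaffney's inequality for absolute boundary conditions gives closedness of $Q$ on $H_N^1(M)$, and Rellich again gives discreteness.
\item $Q(\w)=0$ means $d\w=\delta\w=0$, so the kernel is exactly $H_A^p(M)$.
\end{itemize}
Min-max with the first $\dim H_A^p(M)$ eigenvalues equal to zero produces the shifted dimension in \eqref{eq:vp1AlternativeNeumannAbsolueForme1}. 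Orthogonality to $H_A^p(M)$ gives \eqref{eq:vpNeumannAbsolueForme}.

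\textbf{Steklov problem, \eqref{eq:sigma1formesSteklov}.}
\begin{itemize}
\item \emph{Energy of the extension.} Apply \eqref{eq:IPP3} to the tangential harmonic extension $\hat\w$ of \eqref{eq:steklovFormes}. Since $\Delta\hat\w=0$ and $\nu\lrcorner\hat\w=0$, this gives $\|d\hat\w\|^2+\|\delta\hat\w\|^2=(T^{[p]}\w,\w)_{L^2(\partial M)}$.
\item \emph{Dirichlet principle.} Any $\eta\in H_N^1(M)$ with $\iota^*\eta=\w$ can be written $\eta=\hat\w+\xi$ with $\xi\in H_0^1(M)$. By \eqref{eq:IPP3'}, $(d\hat\w,d\xi)+(\delta\hat\w,\delta\xi)=(\Delta\hat\w,\xi)+(\text{boundary terms involving }\iota^*\xi,\ \nu\lrcorner\xi)=0$. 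Hence $Q(\eta)=Q(\hat\w)+Q(\xi)\ge Q(\hat\w)$.
\item \emph{Conclusion.} The Rayleigh quotient of $T^{[p]}$ on $L^2(\partial M)$ therefore equals the infimum in \eqref{eq:sigma1formesSteklov} over extensions. The kernel of $T^{[p]}$ is identified with $H_A^p(M)$ as recalled above, and the min-max principle for the self-adjoint elliptic pseudodifferential operator $T^{[p]}$ \cite{RaulotSavo} concludes.
\end{itemize}

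\textbf{Main obstacle.} The delicate point is analytic rather than algebraic. One needs Gaffney coercivity on $H_0^1(M)$ and $H_N^1(M)$, so that the forms are closed with compact resolvent. One also needs elliptic regularity, to justify applying \eqref{eq:IPP1} to $H^2$ forms in the alternative Dirichlet characterization and to show that minimizers in $H_N^1(M)$ are smooth and satisfy $\nu\lrcorner d\w=0$. I would quote these facts from \cite{taylor,GueriniThese} rather than reprove them.
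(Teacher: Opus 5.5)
Your proposal is correct and follows the route the paper indicates. The paper omits the proof, saying only that these characterizations come from the min-max principle exactly as in the scalar case, and your argument supplies precisely those standard details: the closed form $\lVert d\w\rVert^2+\lVert\delta\w\rVert^2$ on $H_0^1(M)$ and $H_N^1(M)$ via \eqref{eq:IPP3}, the eigenform expansion for \eqref{eq:vpKAlternativeDirichletForme}, and the Dirichlet principle reducing \eqref{eq:sigma1formesSteklov} to the Rayleigh quotient of $T^{[p]}$ on $(\iota^*H_A^p(M))^\perp$.
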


\subsection{Comparison theorems}
Below are the comparison theorems for the Laplacian and the Hessian that will be used to prove the inequalities stated in \thref{thm:ineg1} and \thref{thm:ineg2}.
\begin{definition}\cite[Section 2.2]{hassannezhadETsiffert}
For all $\kappa \in \mathbb{R},$ we denote by $H_{\kappa}:[0,\infty)\longrightarrow \mathbb{R}$ the function that satisfies the following Riccati differential equation
\[H'_{\kappa}+H^2_{\kappa}+\kappa=0,\ \text{with}\ \lim_{r\to 0} \frac{rH_{\kappa}(r)}{n-1}=1.\]\end{definition}

Actually, we have
\[H_{\kappa}(r) =
\begin{cases}
  (n-1)\sqrt{\kappa}\cot(\sqrt{\kappa}r) & \text{if}\ \kappa>0, \\
  \frac{n-1}{r} & \text{if}\ \kappa=0, \\
 (n-1)\sqrt{|\kappa|}\coth(\sqrt{|\kappa|}r) & \text{if}\ \kappa<0. \\
\end{cases}\]

We denote by $A\wedge B:=\min\{A,B\}\ \text{and}\ A\vee B:=\max\{A,B\},$ for $A,B \in \mathbb{R}.$ The following theorems are stated in \cite [Theorem 1.59 and 1.82]{ChowLuNi} and \cite[Theorem 2.1 and 2.2]{hassannezhadETsiffert}.

\begin{thm}\label{thm:hess} 
Let $(M^n,g)$ be a complete Riemannian manifold. Let $\gamma:[0,L]\rightarrow M$ be a minimal geodesic from a point ${x_0}$ on $M$, such that the intersection of its image with the cut-locus of $x_0$ is empty. Suppose also that $\kappa_1\leq K_g(X,\gamma'(t))\leq \kappa_2$ for all $t\in [0,L]$ and $X\in T_{\gamma(t)}M$ orthogonal to $\gamma'(t)$. Then:
\begin{enumerate}
    \item The function $ d_{x_0} $ satisfies the following inequalities:
    \begin{align*}
        \nabla^2d_{x_0}(X,X) &\leq \frac{H_{\kappa_1}(t)}{n-1}g(X,X), 
        && \forall t\in [0,L],\ X\in \langle \gamma'(t)\rangle^{\perp} \subset T_{\gamma(t)}M. \\
        \nabla^2d_{x_0}(X,X) &\geq \frac{H_{\kappa_2}(t)}{n-1}g(X,X), 
        && \forall t\in \left[0, L\wedge\frac{\pi}{2\sqrt{\kappa_2\vee0}}\right],\ X\in \langle \gamma'(t)\rangle^{\perp} \subset T_{\gamma(t)}M.
    \end{align*}
    \item The function $ \rho_{x_0} $ satisfies the following estimates:
    \begin{align*}
        \nabla^2\rho_{x_0}(X,X) &\leq \frac{tH_{\kappa_1}(t)}{n-1}g(X,X), 
        && \forall t\in [0,L],\ X\in \langle \gamma'(t)\rangle^{\perp} \subset T_{\gamma(t)}M. \\
        \nabla^2\rho_{x_0}(X,X) &\geq \frac{tH_{\kappa_2}(t)}{n-1}g(X,X), 
        && \forall t\in [0,L\wedge\frac{\pi}{2\sqrt{\kappa_2\vee0}}],\ X\in \langle \gamma'(t)\rangle^{\perp} \subset T_{\gamma(t)}M.
    \end{align*}
\end{enumerate}
\end{thm}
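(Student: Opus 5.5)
The statement is the classical Hessian comparison theorem of \cite{ChowLuNi}. I would prove item 2 from item 1, and prove item 1 by comparison arguments along the geodesic $\gamma$.

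\emph{Reduction of item 2 to item 1.} Since $\rho_{x_0}=\frac12 d_{x_0}^2$, we have
\[\nabla^2\rho_{x_0}=d_{x_0}\nabla^2 d_{x_0}+dd_{x_0}\otimes dd_{x_0}.\]
At $\gamma(t)$ one has $d_{x_0}=t$ and $\nabla d_{x_0}=\gamma'(t)$. So for $X\perp\gamma'(t)$ the second term vanishes, and $\nabla^2\rho_{x_0}(X,X)=t\,\nabla^2 d_{x_0}(X,X)$. Multiplying the two bounds of item 1 by $t\ge 0$ gives item 2.

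\emph{Setup for item 1.} Because $\gamma$ avoids the cut locus, $d_{x_0}$ is smooth near $\gamma(t)$ for $t\in(0,L]$. Fix such a $t$ and a unit $X\perp\gamma'(t)$. Let $J$ be the Jacobi field along $\gamma|_{[0,t]}$ with $J(0)=0$ and $J(t)=X$. The standard second variation computation gives
\[\nabla^2 d_{x_0}(X,X)=\langle J'(t),J(t)\rangle=I_t(J,J),\qquad I_t(V,V)=\int_0^t\big(|V'|^2-\langle R(V,\gamma')\gamma',V\rangle\big)\,ds.\]
I write $\mathrm{sn}_\kappa$ for the solution of $f''+\kappa f=0$ with $f(0)=0$, $f'(0)=1$. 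Then $\mathrm{sn}_\kappa'/\mathrm{sn}_\kappa=H_\kappa/(n-1)$.

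\emph{Upper bound.} Since there are no conjugate points on $[0,t]$, the index lemma says $J$ minimizes $I_t$ among fields vanishing at $0$ and equal to $X$ at $t$. Take $V(s)=\frac{\mathrm{sn}_{\kappa_1}(s)}{\mathrm{sn}_{\kappa_1}(t)}E(s)$, where $E$ is the parallel field with $E(t)=X$. The lower bound $K_g\ge\kappa_1$ and an integration by parts give
\[I_t(V,V)\le \frac{1}{\mathrm{sn}_{\kappa_1}(t)^2}\int_0^t\big(\mathrm{sn}_{\kappa_1}'^2-\kappa_1\mathrm{sn}_{\kappa_1}^2\big)\,ds=\frac{\mathrm{sn}_{\kappa_1}'(t)}{\mathrm{sn}_{\kappa_1}(t)}=\frac{H_{\kappa_1}(t)}{n-1}.\]
If $\kappa_1>0$, this requires $t<\pi/\sqrt{\kappa_1}$. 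That holds because Rauch comparison would otherwise produce a conjugate point on $[0,t]$, which is excluded since $\gamma$ avoids the cut locus.

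\emph{Lower bound, the main obstacle.} This bound cannot come from the index lemma, which only gives upper bounds. I would use the Riccati equation instead. Let $A(s)$ be the restriction of $\nabla^2 d_{x_0}$ at $\gamma(s)$ to $\gamma'(s)^\perp$, viewed as a symmetric endomorphism in a parallel frame. It satisfies
\[A'+A^2+R_{\gamma'}=0,\qquad A(s)\sim \tfrac1s\,\mathrm{Id}\ \text{as } s\to0.\]
By $K_g\le\kappa_2$ we have $R_{\gamma'}\le\kappa_2\,\mathrm{Id}$. The scalar function $\phi=H_{\kappa_2}/(n-1)$ solves $\phi'+\phi^2+\kappa_2=0$ with the same $1/s$ singularity at $0$.

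I would compare $A$ with $\phi\,\mathrm{Id}$ via the matrix Riccati comparison of Eschenburg--Heintze. The step needing care is that $\lambda_{\min}(A)$ is only Lipschitz, not smooth. I would handle this with a barrier argument: for fixed $s$ and a unit eigenvector $v$ of the smallest eigenvalue, extend $v$ parallelly and use the upper Dini derivative. This yields $\lambda_{\min}'+\lambda_{\min}^2+\kappa_2\ge0$ in the barrier sense.

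Both $\lambda_{\min}$ and $\phi$ behave like $1/s$ near $0$. So I would compare $\psi_\epsilon=\lambda_{\min}-\phi$ on $[\epsilon,t]$ via the linear inequality $\psi'\ge-(\lambda_{\min}+\phi)\psi$, and then let $\epsilon\to0$. This gives $\lambda_{\min}\ge\phi$ as long as $\phi$ stays finite. The restriction $t\le L\wedge\frac{\pi}{2\sqrt{\kappa_2\vee 0}}$ keeps $\phi\ge0$ and finite, so the Grönwall-type step is legitimate there.

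As an alternative, I would fall back on Rauch's comparison: show that $s\mapsto|J(s)|/\mathrm{sn}_{\kappa_2}(s)$ is nondecreasing, which is equivalent to $\langle J',J\rangle/|J|^2\ge \mathrm{sn}_{\kappa_2}'/\mathrm{sn}_{\kappa_2}$ at $s=t$.
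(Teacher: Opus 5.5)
The paper does not prove this statement; it quotes it from \cite{ChowLuNi} and \cite{hassannezhadETsiffert}. Your proposal gives the standard self-contained proof, and it is correct. The identity $\nabla^2\rho_{x_0}=d_{x_0}\nabla^2 d_{x_0}+dd_{x_0}\otimes dd_{x_0}$ reduces item 2 to item 1. The index lemma with the test field $\frac{\mathrm{sn}_{\kappa_1}(s)}{\mathrm{sn}_{\kappa_1}(t)}E(s)$ gives the upper bound, and you rightly use the absence of conjugate points on $(0,t]$ to guarantee $t<\pi/\sqrt{\kappa_1}$. The Riccati comparison then gives the lower bound. What this buys over the paper's citation is that the reader sees exactly which curvature hypothesis enters which inequality.

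A few points need tightening.
\begin{itemize}
\item At $t=0$, item 1 has no meaning, so item 2 must be checked there directly. This follows from $\nabla^2\rho_{x_0}(x_0)=g$ and $tH_\kappa(t)/(n-1)\to 1$.
\item In the barrier step, $f(s)=\langle A(s)v,v\rangle$ with $v$ parallel is an \emph{upper} barrier for $\lambda_{\min}$. It bounds the \emph{left lower} Dini derivative from below by $-\lambda_{\min}^2-\kappa_2$, but bounds the right upper Dini derivative only from above. So either work with left derivatives, or use that $\lambda_{\min}$ is Lipschitz, hence absolutely continuous and differentiable a.e. Your integrating-factor argument then closes, since $\psi(\epsilon)=O(\epsilon)$ while $e^{-\int_\epsilon^t(\lambda_{\min}+\phi)}=O(\epsilon^2)$.
\item Your remark that the lower bound cannot come from the index form is not right. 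Since $J\perp\gamma'$ and $K_g\le\kappa_2$, one has $\nabla^2 d_{x_0}(X,X)=I_t(J,J)\ge\int_0^t\big(|J'|^2-\kappa_2|J|^2\big)\,ds$. For any $V$ with $V(0)=0$, write $V=\mathrm{sn}_{\kappa_2}W$ and integrate by parts. When $\mathrm{sn}_{\kappa_2}>0$ on $(0,t]$, this gives $\int_0^t\big(|V'|^2-\kappa_2|V|^2\big)\,ds=\frac{\mathrm{sn}_{\kappa_2}'(t)}{\mathrm{sn}_{\kappa_2}(t)}|V(t)|^2+\int_0^t \mathrm{sn}_{\kappa_2}^2|W'|^2\,ds$. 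This is the index lemma for the constant-curvature model, which is the classical proof of the Rauch comparison you list as a fallback. It yields the lower bound in one line and avoids all eigenvalue-regularity issues.
\item When $\kappa_2>0$, both lower-bound arguments only need $t<\pi/\sqrt{\kappa_2}$. The cutoff $\pi/(2\sqrt{\kappa_2})$ in the statement, and the sign $\phi\ge 0$ you invoke, are never used.
\end{itemize}
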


\begin{thm}\label{thm:laplace}
Let $(M^n,g)$ be a complete Riemannian manifold. The distance function and the squared distance function satisfy the following inequalities:

\begin{enumerate}
    \item If $ \ric_g \geq (n-1)\kappa, \ \kappa \in \mathbb{R},$ then, for all $ {x_0} \in M $, we have 
    \[
    -\Delta d_{x_0}(y) \leq H_{\kappa}(d_{x_0}(y)) \quad \text{and} \quad -\Delta \rho_{x_0}(y) \leq 1 + d_{x_0}(y). H_{\kappa}(d_{x_0}(y)).
    \]
    
    \item Under the same assumptions as in \thref{thm:hess}, we have:
    \begin{enumerate}
        \item[a)] For all $ t \in [0,L] $,
        \[
        -\Delta d_{x_0}(\gamma(t)) \leq H_{\kappa_1}(t) \quad \text{and} \quad -\Delta \rho_{x_0}(\gamma(t)) \leq 1 + t H_{\kappa_1}(t).
        \]
        
        \item[b)] For all $ t \in \left[0, L \wedge \frac{\pi}{2\sqrt{\kappa_2 \vee 0}}\right] $, we have
        \[
        -\Delta d_{x_0}(\gamma(t)) \geq H_{\kappa_2}(t) \quad \text{and} \quad -\Delta \rho_{x_0}(\gamma(t)) \geq 1 + t H_{\kappa_2}(t).
        \]
    \end{enumerate}
\end{enumerate}

\end{thm}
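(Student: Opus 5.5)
We use that the Laplacian here is the positive one, so $-\Delta f=\operatorname{tr}\nabla^2 f$ for functions. The plan is to first prove the statements for $d_{x_0}$ by a Riccati comparison along geodesics, then deduce those for $\rho_{x_0}$ by a pointwise identity. For that identity, away from the cut locus we have $|\nabla d_{x_0}|=1$ and $\nabla\rho_{x_0}=d_{x_0}\nabla d_{x_0}$. Hence
\[
-\Delta\rho_{x_0}=|\nabla d_{x_0}|^2+d_{x_0}\,(-\Delta d_{x_0})=1+d_{x_0}\,(-\Delta d_{x_0}).
\]
Since $d_{x_0}\geq 0$, any upper (resp. lower) bound $-\Delta d_{x_0}\le H$ (resp. $\ge H$) immediately gives $-\Delta\rho_{x_0}\le 1+d_{x_0}H$ (resp. $\ge$). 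So everything reduces to the distance function.

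For part 2, fix the minimal geodesic $\gamma$ from $x_0$ avoiding the cut locus. At $\gamma(t)$ take an orthonormal basis $\gamma'(t),e_2,\dots,e_n$. Because $d_{x_0}$ is linear along $\gamma$ with unit gradient $\gamma'$, we have $\nabla^2 d_{x_0}(\gamma',\gamma')=0$. Therefore
\[
-\Delta d_{x_0}(\gamma(t))=\sum_{i=2}^n\nabla^2 d_{x_0}(e_i,e_i).
\]
Summing the $n-1$ Hessian bounds of \thref{thm:hess}(1) gives $-\Delta d_{x_0}\le H_{\kappa_1}(t)$ on $[0,L]$. The lower bound $-\Delta d_{x_0}\ge H_{\kappa_2}(t)$ follows in the same way on the restricted interval $[0,L\wedge\frac{\pi}{2\sqrt{\kappa_2\vee0}}]$. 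The statements for $\rho_{x_0}$ then follow from the identity above with $d_{x_0}(\gamma(t))=t$.

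For part 1 only a Ricci bound is available, so I would run the Bochner argument directly. Let $f=d_{x_0}$ away from the cut locus and $\{x_0\}$. Bochner's formula for the analyst's Laplacian $-\Delta$ reads
\[
\tfrac12(-\Delta)|\nabla f|^2=|\nabla^2 f|^2+\langle\nabla f,\nabla(-\Delta f)\rangle+\ric(\nabla f,\nabla f).
\]
Since $|\nabla f|=1$, the left side vanishes. The Hessian kills $\nabla f$, so it lives on an $(n-1)$-dimensional space and Cauchy–Schwarz gives $|\nabla^2 f|^2\ge \frac{(\Delta f)^2}{n-1}$. Setting $m(t)=-\Delta f(\gamma(t))$ along a unit-speed minimal geodesic, this yields the Riccati inequality
\[
m'+\frac{m^2}{n-1}+(n-1)\kappa\le 0,\qquad m(t)\sim\frac{n-1}{t}\ \ (t\to0),
\]
where the asymptotics come from comparison with Euclidean space near $x_0$. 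The explicit $H_\kappa$ listed above satisfies the same equation with equality and the same asymptotics. This is the normalization $H'+H^2/(n-1)+(n-1)\kappa=0$, which I would use in place of the one written in the definition, since it is the one consistent with the explicit formulas.

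The comparison step goes through $\psi=m-H_\kappa$. It satisfies $\psi'\le-\frac{(m+H_\kappa)}{n-1}\psi$, and $\psi(t)\to0$ suitably fast as $t\to0$. Equivalently, writing $m=(n-1)u'/u$ and $H_\kappa=(n-1)s'/s$ with $s$ the model Jacobi field, one checks that $(u/s)$ is nonincreasing with limit $1$ at $0$. Integrating gives $\psi\le0$, i.e. $-\Delta d_{x_0}\le H_\kappa(d_{x_0})$.

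The main obstacle is the cut locus, where $d_{x_0}$ is not smooth. There I would interpret the inequality in the barrier (equivalently distributional) sense. At a cut point $y$, take a point $x_\varepsilon$ slightly along a minimal geodesic from $x_0$ to $y$, and use $\varepsilon+d_{x_\varepsilon}$, which is smooth near $y$, as an upper barrier for $d_{x_0}$. Applying the smooth estimate to this barrier and letting $\varepsilon\to0$ uses the continuity of $H_\kappa$. In the applications of this paper $M$ is star-shaped with $\exp_{x_0}$ a diffeomorphism, so this subtlety does not actually arise there.
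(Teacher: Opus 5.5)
Your proof is correct. The paper gives no proof of its own and only cites Chow--Lu--Ni and Hassannezhad--Siffert; your argument is the standard one behind those comparison theorems: Bochner's formula with a Riccati comparison along minimal geodesics for the Ricci case, summing the Hessian bounds of \thref{thm:hess} over $\gamma'(t)^\perp$ for the sectional case, the identity $-\Delta\rho_{x_0}=1+d_{x_0}(-\Delta d_{x_0})$ to pass to $\rho_{x_0}$, and Calabi's barrier at cut points. You are also right that the paper's defining equation $H_\kappa'+H_\kappa^2+\kappa=0$ is incompatible, for $n\neq2$, with both $rH_\kappa/(n-1)\to1$ and the explicit formulas; the consistent normalization is $H_\kappa'+H_\kappa^2/(n-1)+(n-1)\kappa=0$, i.e.\ $H_\kappa/(n-1)$ solves $h'+h^2+\kappa=0$.
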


\section{Rellich identity for differential forms}\label{sec:Rellich}
In this section we give the generalization of the Rellich identity for differential forms, which is a particular expression of integration by parts. It was proven in \cite[Section 3]{hassannezhadETsiffert} in the case of functions on a compact Riemannian manifold $(M, g)$ with $C^2$ boundary. 

Let $(M^n,g)$ be a compact manifold with smooth boundary $\partial M=\Sigma$. Given a
smooth vector field $F$ on $M$, we define a self-adjoint endomorphism
$T_F^{[p]}$ of $\Omega^p(M)$ as follows (see \cite{SavoRellich}): if $\omega$ is a $p$-form
and $X_1, \dots , X_p$ are tangent vector fields, then
$$(T_F^{[p]}\w) (X_1, \dots , X_p)=\sum_{k=1}^p \omega (X_1, \dots,\nabla_{X_k} F,  \dots, X_p). $$
\begin{lemma} \cite[Lemma 2.1]{SavoRellich}
  Let $\omega$ be a $p$-form and $F$ a vector field on $M$. We have:
  $$\mathcal{L}_F \omega = \nabla_F \omega + T_F^{[p]}\omega,$$
  where $\mathcal{L}_F$ is the Lie derivative in the direction of $F$.
\end{lemma}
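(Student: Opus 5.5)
We need to prove $\mathcal{L}_F\omega=\nabla_F\omega+T_F^{[p]}\omega$. The plan is a pointwise tensorial computation that compares the standard derivation formula for the Lie derivative with the one for the covariant derivative, using that the Levi-Civita connection is torsion free. Both sides are $\mathbb{R}$-linear in $\omega$ and local, so it suffices to evaluate them on arbitrary vector fields $X_1,\dots,X_p$.

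The first step is to recall that the Lie derivative acts on a $p$-form as a tensor derivation commuting with contractions:
\[
(\mathcal{L}_F\omega)(X_1,\dots,X_p)=F\big(\omega(X_1,\dots,X_p)\big)-\sum_{k=1}^p\omega(X_1,\dots,[F,X_k],\dots,X_p).
\]
The second step is to recall the analogous formula for the covariant derivative:
\[
(\nabla_F\omega)(X_1,\dots,X_p)=F\big(\omega(X_1,\dots,X_p)\big)-\sum_{k=1}^p\omega(X_1,\dots,\nabla_F X_k,\dots,X_p).
\]
Subtracting the two expressions gives
\[
(\mathcal{L}_F\omega-\nabla_F\omega)(X_1,\dots,X_p)=\sum_{k=1}^p\omega\big(X_1,\dots,\nabla_F X_k-[F,X_k],\dots,X_p\big).
\]
By the torsion-free property, $\nabla_F X_k-\nabla_{X_k}F=[F,X_k]$. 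Hence $\nabla_F X_k-[F,X_k]=\nabla_{X_k}F$, and the right-hand side is exactly $(T_F^{[p]}\omega)(X_1,\dots,X_p)$ by the definition above.

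Finally, we check that the identity is well posed. $T_F^{[p]}\omega$ depends only on the pointwise values of the $X_k$, since $X\mapsto\nabla_X F$ is tensorial. Therefore the identity proven for vector fields holds at each point for all tangent vectors. There is no real obstacle here; the only point needing care is the sign convention for the Lie bracket, $[F,X]=\mathcal{L}_F X$, in the derivation formula. An equivalent route is to check the identity on $0$-forms, where both sides equal $F(f)$ and $T_F^{[0]}=0$, and on exact $1$-forms $df$. Both sides are derivations of the wedge product, and $\mathcal{L}_F$ and $\nabla_F$ commute appropriately with the local generators. The direct computation is simpler, so I would present that.
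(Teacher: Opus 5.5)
Your proof is correct. Subtracting the derivation formulas for $\mathcal{L}_F\omega$ and $\nabla_F\omega$ and using torsion-freeness in the form $\nabla_F X_k-[F,X_k]=\nabla_{X_k}F$ gives exactly $T_F^{[p]}\omega$, for any $C^1$ vector field $F$. The paper gives no proof of its own (it quotes the lemma from \cite[Lemma 2.1]{SavoRellich}), and your direct computation is a standard way to verify it.
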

We now state the Rellich (or Pohozaev) identity on differential forms. This formula expresses the Laplacian of a differential form in terms of expressions of order at most $1.$ 
\begin{thm}\label{theo:Rellich}\cite{RellichFormesDiff} Let $(M^n,g)$ be a compact Riemannian manifold with smooth boundary $\partial M$ and let $F : M \rightarrow TM$ be a Lipschitz vector field on $M$. Then for every $\omega \in \Omega^p(M)$ we have
\begin{eqnarray}\label{Rellich_forms} &\int_{M}\langle \Delta \omega , F\lrcorner d \omega \rangle d\mu_g +\int_M \langle \delta \omega , F\lrcorner \Delta \omega \rangle d\mu_g \nonumber \\
&=-\dfrac{1}{2}\int_{\partial M} (|d \omega|^2 +|\delta \omega|^2) \langle F,\nu \rangle d\mu_g\nonumber + \int_{\partial M} \langle F \wedge i^* (\delta \omega), \nu \lrcorner d \omega \rangle d\mu_g +\int_{\partial M} \langle i^* (F\lrcorner d \omega), \nu \lrcorner d \omega \rangle d\mu_g \\ \nonumber
&+ \int_{\partial M} \langle i^* (F\lrcorner \delta \omega), \nu \lrcorner \delta \omega \rangle d\mu_g-\dfrac{1}{2}\int_{M} (|d \omega|^2 +|\delta \omega|^2) \dive\, F d\mu_g+ \int_{M} \langle \delta \omega, dF \lrcorner d \omega \rangle d\mu_g \\
&+ \int_{M} \langle T_F^{[p+1]} d \omega, d \omega \rangle d\mu_g+ \int_{M} \langle T_F^{[p-1]} \delta \omega, \delta \omega \rangle d\mu_g. \end{eqnarray}
\end{thm}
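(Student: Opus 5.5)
The plan is to prove \eqref{Rellich_forms} first for a smooth vector field $F$, by integrating by parts each term on the left-hand side, and then to pass to Lipschitz $F$ by approximation. In the smooth case I split $\Delta\omega=d\delta\omega+\delta d\omega$ on the left, which gives four integrals:
\[
A=\int_M\langle \delta d\omega, F\lrcorner d\omega\rangle,\quad B=\int_M\langle d\delta\omega, F\lrcorner d\omega\rangle,\quad C=\int_M\langle \delta\omega, F\lrcorner d\delta\omega\rangle,\quad D=\int_M\langle \delta\omega, F\lrcorner\delta d\omega\rangle .
\]
The basic tools are \eqref{eq:IPP}, Cartan's formula $\mathcal L_F=d(F\lrcorner\,\cdot)+F\lrcorner d$, and the lemma $\mathcal L_F=\nabla_F+T_F^{[p]}$ of \cite{SavoRellich}.

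\emph{Terms $A$ and $C$.} For $A$, apply \eqref{eq:IPP} with $d\omega$ and $F\lrcorner d\omega$. This turns $A$ into $\int_M\langle d\omega, d(F\lrcorner d\omega)\rangle$ plus a boundary term $\int_{\partial M}\langle \iota^*(F\lrcorner d\omega),\nu\lrcorner d\omega\rangle$, with the sign fixed by \eqref{eq:IPP}. Since $d(d\omega)=0$, Cartan's formula gives $d(F\lrcorner d\omega)=\mathcal L_F d\omega=\nabla_F d\omega+T_F^{[p+1]}d\omega$. Then
\[
\langle d\omega,\nabla_F d\omega\rangle=\tfrac12 F(|d\omega|^2),
\]
and the divergence theorem, with $\nu$ inward, turns this into $-\tfrac12\int_M|d\omega|^2\dive F-\tfrac12\int_{\partial M}|d\omega|^2\langle F,\nu\rangle$. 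This produces the $|d\omega|^2$ parts and the $T_F^{[p+1]}$ term of \eqref{eq:IPP}'s right-hand side analogue in \eqref{Rellich_forms}. For $C$, I write $F\lrcorner d\delta\omega=\mathcal L_F\delta\omega-d(F\lrcorner\delta\omega)$. The $\mathcal L_F$ part gives, in the same way, $\tfrac12 F(|\delta\omega|^2)$ together with the $T_F^{[p-1]}$ term. The remaining part $\int_M\langle\delta\omega,d(F\lrcorner\delta\omega)\rangle$ is handled by \eqref{eq:IPP}: since $\delta\delta\omega=0$, only the boundary term $\int_{\partial M}\langle\iota^*(F\lrcorner\delta\omega),\nu\lrcorner\delta\omega\rangle$ survives.

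\emph{Terms $B$ and $D$.} For $B$, apply \eqref{eq:IPP} to $\delta\omega$ and $F\lrcorner d\omega$. This gives $\int_M\langle\delta\omega,\delta(F\lrcorner d\omega)\rangle$ plus the boundary term $-\int_{\partial M}\langle\iota^*\delta\omega,\nu\lrcorner(F\lrcorner d\omega)\rangle$. Antisymmetry lets me rewrite $\nu\lrcorner F\lrcorner d\omega=-F\lrcorner(\nu\lrcorner d\omega)$, so by duality this boundary term becomes the term $\int_{\partial M}\langle F\wedge\iota^*\delta\omega,\nu\lrcorner d\omega\rangle$. Adding $D$, the interior contributions combine into
\[
\int_M\langle\delta\omega,\ \delta(F\lrcorner\alpha)+F\lrcorner\delta\alpha\rangle,\qquad \alpha=d\omega .
\]
I then need the pointwise commutator identity for $\delta(F\lrcorner\alpha)+F\lrcorner\delta\alpha$. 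This is the step I expect to be the main obstacle. I would compute it in a local orthonormal frame using $\delta=-\sum_i e_i\lrcorner\nabla_{e_i}$. The computation gives $-\sum_i (\nabla_{e_i}F)\lrcorner e_i\lrcorner\alpha$, which equals the contraction $dF\lrcorner\alpha$ up to a universal constant and sign: the symmetric part of $\nabla F$ drops out against the antisymmetric $\alpha$. Matching this normalization with the term $\int_M\langle\delta\omega,dF\lrcorner d\omega\rangle$ in \eqref{Rellich_forms} is exactly what has to be checked carefully.

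Collecting all four pieces gives \eqref{Rellich_forms} for smooth $F$. For Lipschitz $F$, I would approximate $F$ in $W^{1,\infty}$-weak$^*$ by smooth fields $F_\varepsilon$ with $F_\varepsilon\to F$ uniformly and $\nabla F_\varepsilon\to\nabla F$ a.e. with uniform bounds. Every term in \eqref{Rellich_forms} involves $F$ at most to first order, and $\omega$ is smooth, so dominated convergence lets me pass to the limit in each term.
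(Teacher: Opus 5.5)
Your proposal is essentially the paper's proof. It uses the same splitting of the left-hand side and the same applications of \eqref{eq:IPP}, producing the same boundary terms. It also uses Cartan's formula with $\mathcal L_F=\nabla_F+T_F$ for the $d\omega$ and $\delta\omega$ pieces, and the identity $\delta(F\lrcorner d\omega)+F\lrcorner\delta d\omega=dF\lrcorner d\omega$ to merge your $B$ and $D$. The paper writes that identity as $\delta(F\lrcorner d\omega)=dF\lrcorner d\omega-F\lrcorner\Delta\omega+F\lrcorner d\delta\omega$ and moves the $F\lrcorner\Delta\omega$ term to the left. Your approximation argument for Lipschitz $F$ is extra care the paper does not take, since its computation is carried out as if $F$ were smooth.

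The step you left open closes with no stray constant, but your displayed expression has the wrong sign. Reading iterated contractions from the right, $-\sum_i(\nabla_{e_i}F)\lrcorner(e_i\lrcorner\alpha)$ has the two contractions in the wrong order. Use $\nabla_{e_i}(F\lrcorner\alpha)=(\nabla_{e_i}F)\lrcorner\alpha+F\lrcorner\nabla_{e_i}\alpha$ and $e_i\lrcorner F\lrcorner=-F\lrcorner e_i\lrcorner$. Then
\[
\delta(F\lrcorner\alpha)+F\lrcorner\delta\alpha=-\sum_i e_i\lrcorner\bigl((\nabla_{e_i}F)\lrcorner\alpha\bigr)=\sum_i(\nabla_{e_i}F)\lrcorner(e_i\lrcorner\alpha)=dF\lrcorner\alpha .
\]
Here $dF=\sum_i e_i\wedge\nabla_{e_i}F$, and $dF\lrcorner$ is the pointwise adjoint of $dF\wedge$, i.e.\ $(a\wedge b)\lrcorner\alpha=b\lrcorner(a\lrcorner\alpha)$. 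No factor $2$ appears, because $dF$ is already written as a sum over all ordered pairs $(i,j)$. This gives exactly the term $+\int_M\langle\delta\omega,dF\lrcorner d\omega\rangle$ in \eqref{Rellich_forms}.
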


\begin{proof}
The proof is essentially based on the integration by parts formulas stated in \propref{prop:IPP}. We compute
\begin{eqnarray}
\label{Rellich_calcul}
 \int_{M} \langle \Delta \omega , F\lrcorner d \omega \rangle d\mu_g &=&  \int_{M} \langle \delta d \omega , F\lrcorner d \omega \rangle d\mu_g +  \int_{M} \langle d \delta \omega , F\lrcorner d \omega \rangle d\mu_g \nonumber \\
 &=& \int_{M}  \langle d \omega, d(F \lrcorner d \omega) \rangle  d\mu_g +\int_{\partial M}  \langle \iota^* (F\lrcorner d \omega), \nu \lrcorner d \omega \rangle d\mu_g \nonumber \\
  && + \int_{M}  \langle \delta \omega, \delta(F \lrcorner d \omega) \rangle  d\mu_g - \int_{\partial M}  \langle  \iota^* (\delta \omega), \nu \lrcorner F \lrcorner d \omega \rangle d\mu_g
   \nonumber \\
   &=& \int_{M}  \langle d \omega, \mathcal{L}_F (d \omega) \rangle  d\mu_g +\int_{\partial M}  \langle \iota^* (F\lrcorner d \omega), \nu \lrcorner d \omega \rangle d\mu_g \nonumber \\
  && + \int_{M}  \langle \delta \omega, \delta(F \lrcorner d \omega) \rangle  d\mu_g - \int_{\partial M}  \langle  \iota^* (\delta \omega), \nu \lrcorner F \lrcorner d \omega \rangle d\mu_g
   \nonumber \\
   &=& \int_{M}  \langle d \omega, \nabla_F d \omega \rangle  d\mu_g + \int_{M}  \langle d \omega, T_F^{[p+1]} d \omega \rangle  d\mu_g  +\int_{\partial M}  \langle \iota^* (F\lrcorner d \omega), \nu \lrcorner d \omega \rangle d\mu_g \nonumber \\
  && + \int_{M}  \langle \delta \omega, \delta(F \lrcorner d \omega) \rangle  d\mu_g - \int_{\partial M}  \langle  \iota^* (\delta \omega), \nu \lrcorner F \lrcorner d \omega \rangle d\mu_g.
\end{eqnarray}
We evaluate now the term $\int_{M}  \langle d \omega, \nabla_F d \omega \rangle
d\mu_g$:
\begin{eqnarray*}
\int_{M}  \langle d \omega, \nabla_F d \omega \rangle  d\mu_g &=& \frac{1}{2} \int_{M} F (|d \omega|^2) d\mu_g \\
&=&-\dfrac{1}{2}\int_{M}  |d \omega|^2  \dive\, F  d\mu_g
-\dfrac{1}{2}\int_{\partial M} |d\omega|^2\langle F,\nu\rangle d\mu_g.
\end{eqnarray*}
We recall now that for any vector field $X$ and $(e_i)_{1\leq i\leq n}$ an orthonormal frame we have \begin{align*}
\delta(X \lrcorner \omega)
&= - e_i \lrcorner \nabla_{e_i} (X \lrcorner \omega) \\
&= - e_i \lrcorner (\nabla_{e_i} X \lrcorner \omega + X \lrcorner \nabla_{e_i} \omega) \\
&= - e_i \lrcorner \nabla_{e_i} X \lrcorner \omega + X \lrcorner 
\underbrace{e_i \lrcorner \nabla_{e_i} \omega}_{-\delta \omega}.
\end{align*}
In our case we need $\delta(F\lrcorner d\w),$ which will be equal to $-e_i\lrcorner\nabla_{e_i}F\lrcorner d\w-F\lrcorner\delta d\w$. Now to compute the term $\int_{M}  \langle \delta \omega, \delta(F \lrcorner d \omega) \rangle  d\mu_g$ we use that
$$\delta (F \lrcorner d\omega)= dF \lrcorner d\omega-F \lrcorner \delta d \omega = dF \lrcorner d\omega-F \lrcorner \Delta  \omega + F \lrcorner d \delta \omega,$$ where $dF=\sum_{i=1}^ne_i\wedge\nabla_{e_i}F$.
By replacing this last equalities in \eqref{Rellich_calcul}, we obtain
\begin{eqnarray}
\label{Rellich_suite}
 &\int_{M} \langle \Delta \omega , F\lrcorner d \omega \rangle d\mu_g +\int_M \langle \delta \omega , F\lrcorner \Delta \omega \rangle d\mu_g\nonumber\\
 &=-\dfrac{1}{2}\int_{\partial M}  |d \omega|^2  \langle F , \nu \rangle  d\mu_g -\dfrac{1}{2}\int_{M}  |d \omega|^2  \dive\, F  d\mu_g + \int_{M}  \langle d \omega, T_F^{[p+1]} d \omega \rangle  d\mu_g  +\int_{\partial M}  \langle \iota^* (F\lrcorner d \omega), \nu \lrcorner d \omega \rangle d\mu_g \nonumber \\
  & +\int_{M}  \langle \delta \omega, dF \lrcorner d\omega \rangle  d\mu_g +\int_{M}  \langle \delta \omega, F \lrcorner d \delta \omega \rangle  d\mu_g- \int_{\partial M}  \langle  \iota^* (\delta \omega), \nu \lrcorner F \lrcorner d \omega \rangle d\mu_g.
\end{eqnarray}

It remains to calculate the term $\int_{M}  \langle \delta \omega, F \lrcorner d \delta \omega \rangle  d\mu_g$. We have
\begin{eqnarray*}
  \int_{M}  \langle \delta \omega, F \lrcorner d \delta \omega \rangle  d\mu_g &=& \int_{M}  \langle \delta \omega, \mathcal{L}_F( \delta \omega) \rangle  d\mu_g -\int_{M}  \langle \delta \omega,d( F \lrcorner  \delta \omega) \rangle  d\mu_g\\
   &=&  \int_{M}  \langle \delta \omega, \nabla_F \delta \omega \rangle  d\mu_g + \int_{M}  \langle \delta \omega, T_F^{[p-1]} \delta \omega \rangle  d\mu_g + \int_{\partial M}  \langle \iota^* (F\lrcorner \delta \omega), \nu \lrcorner \delta \omega \rangle d\mu_g\\
   &=& \int_M\frac{1}{2}F(\lvert \delta\w\rvert^2)d\mu_g+ \int_{M}  \langle \delta \omega, T_F^{[p-1]} \delta \omega \rangle  d\mu_g + \int_{\partial M}  \langle \iota^* (F\lrcorner \delta \omega), \nu \lrcorner \delta \omega \rangle d\mu_g\\
   &=& \frac{1}{2}\int_M\lvert\delta\w\rvert^2\delta F d\mu_g -\frac{1}{2}\int_{\partial M}\lvert\delta\w\rvert^2\langle F,\nu\rangle d\mu_g + \int_{M}  \langle \delta \omega, T_F^{[p-1]} \delta \omega \rangle  d\mu_g + \int_{\partial M}  \langle \iota^* (F\lrcorner \delta \omega), \nu \lrcorner \delta \omega \rangle d\mu_g\\
   &=&-\frac{1}{2}\int_M\lvert\delta\w\rvert^2 \dive F d\mu_g-\frac{1}{2}\int_{\partial M}\lvert\delta\w\rvert^2\langle F,\nu\rangle d\mu_g+ \int_{M}  \langle \delta \omega, T_F^{[p-1]} \delta \omega \rangle  d\mu_g + \int_{\partial M}  \langle \iota^* (F\lrcorner \delta \omega), \nu \lrcorner \delta \omega \rangle d\mu_g.
\end{eqnarray*}
This last identity combined to \eqref{Rellich_suite} leads to the result. \end{proof}
\section{A new biharmonic Steklov problem with Dirichlet boundary conditions on differential forms} \label{sec:BSD2}
In this section, we follow the same methods that are used for the study of the Steklov biharmonic problem with Dirichlet-type boundary conditions as in \cite{BSFidaGeorgeOlaNicolas}. These methods have also been used for other problems in \cite{Article1}. 

In order to establish Inequality \eqref{eq:th1.4(i,2)}, we need a variational characterization of the BSD different from \eqref{eq:q_1Standard} and \eqref{eq:q_1Alternative}. This characterization is needed to be of the following form:  
\[
\mathbf{q}_{1,p} = \inf\left\{\frac{\lVert \Delta \w \rVert^2_{L^2(M)}}{\lVert \nu \lrcorner d\w \rVert^2_{L^2(\partial M)}} \ |\ \w \in \Omega^p(M),\ \w_{|\partial M} = 0,\ \iota^*\delta \w=0 \ \text{on}\ \partial M \ \text{and}\ \nabla_\nu \w \neq 0\right\}.
\]

To find a spectral problem with such a variational characterization, we compute the critical points of the following functional:
\[
Q(\w):=\frac{\lVert \Delta \w \rVert^2_{L^2(M)}}{\lVert \nu \lrcorner d\w \rVert^2_{L^2(\partial M)}},
\]
that will be the eigenforms of the new problem that we denote by BSD2. Let the space
\[
H^2_{BSD2}(M)=\{\w\in H^2(M) \,|\, \iota^*\delta\w=0 \text{ on } \partial M \text{ and } \w_{|\partial M}=0\}.
\]
The form $\w$ is a critical point if and only if for all $\w'\in H^2_{BSD2}(M)$ we have \[
d_{\w} Q(\w') = \frac{2(\Delta\w,\Delta\w')_{L^2(M)}\lVert\nu\lrcorner d\w\rVert^2_{L^2(\partial M)} - 2(\nu\lrcorner d\w,\nu\lrcorner d\w')_{L^2(\partial M)}\lVert\Delta\w\rVert^2_{L^2(M)}}{\lVert\nu\lrcorner d\w\rVert^4_{L^2(\partial M)}}=0.
\] 

This implies that \[(\Delta\w,\Delta\w')_{L^2(M)}\lVert\nu\lrcorner d\w\rVert^2_{L^2(\partial M)} = (\nu\lrcorner d\w,\nu\lrcorner d\w')_{L^2(\partial M)}\lVert\Delta\w\rVert^2_{L^2(M)}.\] 

Hence, performing an integration by parts using \eqref{eq:IPP2} we get \[(\Delta^2\w,\w')_{L^2(M)} -(\iota^*\Delta\w,\nu\lrcorner d\w')_{L^2(\partial M)}=\frac{\lVert \Delta\w\rVert^2_{L^2(M)}}{\lVert \nu\lrcorner d\w\rVert^2_{L^2(\partial M)}}(\nu\lrcorner d\w,\nu\lrcorner d\w')_{L^2(\partial M)},\]
and thus,  \begin{equation}\label{eq:delta2=0}
(\Delta^2\w,\w')_{L^2(M)} = \left(\frac{\lVert \Delta\w\rVert^2_{L^2(M)}}{\lVert \nu\lrcorner d\w\rVert^2_{L^2(\partial M)}}\nu\lrcorner d\w-\iota^*\Delta\w,\nu\lrcorner d\w'\right)_{L^2(\partial M)}.
\end{equation}
Since $\w'_{|\partial M} = 0$ and $\iota^*\delta\w'=0$, this implies that $\nu\lrcorner\nabla_\nu\w'=0$ and $\nu\lrcorner d \w'=\iota^*(\nabla_\nu\w').$ Following the argument in \cite[p.37]{Article1}, we fix a positive $\epsilon$ and define the set $D_\epsilon:=\{x\in M\ | \ d(x,\partial M)\geq \epsilon\}$. We consider a test form $\w'$ vanishing on the set of points whose distance from the boundary is less than $\epsilon$. We have from \eqref{eq:delta2=0} that $(\Delta^2\w,\w')_{L^2(M)}=0$ for all $\w'$, hence $\Delta^2\w=0$ on $D_\epsilon$. Letting $\epsilon$ tend to zero, we obtain that $\Delta^2\w=0$ on $M$, thus the differential form $\nu\lrcorner d\w'$ can take any value, so $\frac{\lVert\Delta\w\rVert^2_{L^2(M)}}{\lVert\nu\lrcorner d\w\rVert^2_{L^2(\partial M)}}\nu\lrcorner d\w=\iota^*\Delta\w.$ We then conclude the following problem:
\begin{equation}(BSD2)
\begin{cases}
    \Delta^2 \w = 0 & \text{on } M \\
     \w = 0 &  \text{on } \partial M\\
    \iota^*  \delta\w = 0 & \text{on } \partial M \\
     \iota^*  \Delta \w- \mathbf{q}\nu \lrcorner d \w   = 0 & \text{on } \partial M.
\end{cases}
\end{equation} 

We follow the same steps as in \cite{Article1} and evaluate the principal symbols of $B_1\w=\w_{|\partial M}, \ B_2\w=\iota^*\delta\w,$ and $B_3\w=\iota^*\Delta\w,$ for $v\in T^*M\setminus\{0\}$ and $y(t)=e^{-\lvert v\rvert t}(at+b)\w_0$ with $a, b \in \mathbb{R}$, $\w_0 \in \Lambda^p T^*_x M_{|\partial M}$. We obtain:  
\begin{align*}
\sigma_{B_1}\!\left(-iv+\partial_t\nu^\flat\right)(y)(0) 
  &= b \w_0, \\[6pt]
\sigma_{B_2}\!\left(-iv+\partial_t\nu^\flat\right)(y)(0) 
  &= -a\,\nu \lrcorner \w_0 
     + \lvert v\rvert\, b\, \nu \lrcorner \w_0
     + i b\, v \lrcorner \iota^* \w_0, \\[6pt]
\sigma_{B_3}\!\left(-iv+\partial_t\nu^\flat\right)(y)(0) 
  &= 2a \lvert v\rvert\, \iota^* \w_0 .
\end{align*}

Thus, we set the map \begin{align*}
\Phi : \mathcal{M}^+_{\Delta^2,v} &\longrightarrow \bigoplus_{j=1}^4 E_j \\
y &\longmapsto \Phi(y)
\end{align*} given by
\[
\Phi(y)=(b\w_0,-a\nu\lrcorner\w_0+\lvert v\rvert b\nu\lrcorner\w_0+ibv\lrcorner\iota^*\w_0,2a\lvert v\rvert \iota^*\w_0).
\]It is easy to check that $\Phi$ is injective and thus is an isomorphism as the dimensions of the spaces is $2\binom{n}{p}$. Thus, the problem (BSD2) is elliptic in the sense of Shapiro-Lopatinskij.

Now to show that the problem (BSD2) has a discrete spectrum of eigenvalues, we follow the same steps as in \cite[Section 3.1]{Article1} and \cite[Section 2.2]{BSFidaGeorgeOlaNicolas}. We define the space \[
Z_{BSD2}=\{\w\in\Omega^p(M)\ |\ \Delta^2\w=0, \ \w_{|\partial M}=0, \ \text{and} \ \iota^*\delta\w=0 \text{ on } \partial M\}.
\]   For $\w$ an eigenform and $\w'\in Z_{BSD2}$, we easily obtain through integration by parts \eqref{eq:IPP2} that  
\[
\int_M\langle\Delta\w,\Delta\w'\rangle d\mu_g=\mathbf{q}\int_{\partial M}\langle\nu\lrcorner d\w,\nu\lrcorner d\w'\rangle d\mu_g.
\]  
Referring to the proof of \cite[Section 3]{Article1}, we get the following theorem:
\begin{thm}
Let $(M^n,g)$ be a compact Riemannian manifold with a smooth boundary $\partial M$. The following problem:  
\begin{equation}\label{eq:BSDF2}
\begin{cases}
    \Delta^2 \w = 0 & \text{on } M \\
     \w = 0 &  \text{on } \partial M\\
    \iota^*  \delta\w = 0 & \text{on } \partial M \\
     \iota^*  \Delta \w- \mathbf{q}\nu \lrcorner d \w   = 0 & \text{on } \partial M,
\end{cases}
\end{equation}  
for differential $ p $-forms, has a discrete spectrum consisting of a non decreasing sequence of positive eigenvalues of finite multiplicities, denoted by $(\mathbf{q_{i,p}})_{i\geq 1}.$

\end{thm}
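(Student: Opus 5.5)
We will prove discreteness of the spectrum of \eqref{eq:BSDF2} by recasting it as the spectral problem of a compact self-adjoint operator, in the spirit of \cite[Section 2.2]{BSFidaGeorgeOlaNicolas} and \cite[Section 3.1]{Article1}.

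\emph{Step 1: reduction to a boundary operator.} Let $\beta$ be a tangential $(p-1)$-form on $\partial M$. We solve the auxiliary biharmonic problem
\[
\Delta^2\w=0 \text{ on } M,\quad \w_{|\partial M}=0,\quad \iota^*\delta\w=0,\quad \iota^*\Delta\w=\beta \text{ on } \partial M.
\]
The problem is of Shapiro--Lopatinskij type, by the symbol computation just carried out, where $B_3$ is now prescribed. So it is Fredholm, and we only need to show that its kernel is trivial. Take $\w$ with $\beta=0$ and apply \eqref{eq:IPP2} with $\w'=\w$. All boundary terms vanish, because $\w_{|\partial M}=0$ kills the terms containing $\iota^*\w'$ and $\nu\lrcorner\w'$, and $\iota^*\Delta\w=0$ kills the remaining one. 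This gives $\|\Delta\w\|^2=0$, so $\w$ is harmonic. Then \eqref{eq:IPP3} shows $d\w=\delta\w=0$. A closed and coclosed form with full Dirichlet data vanishes by unique continuation, since its first-order jet on $\partial M$ is zero. So the problem is uniquely solvable.

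We then define $K\beta:=\nu\lrcorner d\w_\beta$. With this notation, $\mathbf{q}$ is an eigenvalue of \eqref{eq:BSDF2} exactly when $\beta=\mathbf{q}K\beta$ for some $\beta\neq0$.

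\emph{Step 2: properties of $K$.} We show that $K$ is self-adjoint, positive and compact on $L^2(\partial M)$, restricted to tangential forms.
\begin{itemize}
\item Self-adjointness and positivity: by \eqref{eq:IPP2} applied to $\w_\beta,\w_{\beta'}\in Z_{BSD2}$, the only surviving boundary term is the one paired with $\iota^*\Delta$. This gives $(\beta,K\beta')_{L^2(\partial M)}=(\Delta\w_\beta,\Delta\w_{\beta'})_{L^2(M)}$. The right-hand side is symmetric in $\beta,\beta'$ and nonnegative when $\beta=\beta'$.
\item Injectivity: if $K\beta=0$, then $\Delta\w_\beta=0$ by the identity above, hence $\beta=\iota^*\Delta\w_\beta=0$.
\item Compactness: elliptic regularity for the boundary problem gives $\w_\beta\in H^{s+3/2}$ for $\beta\in H^{s}$, roughly. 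Hence $K$ gains one derivative on $\partial M$, and by Rellich's theorem it is compact on $L^2(\partial M)$.
\end{itemize}

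\emph{Step 3: conclusion.} By the spectral theorem for compact self-adjoint operators, $K$ has eigenvalues $\tau_i\downarrow0$, each positive with finite multiplicity. The eigenvalues of \eqref{eq:BSDF2} are $\mathbf{q}_{i,p}=1/\tau_i$, so they form a nondecreasing, unbounded sequence of positive reals with finite multiplicities. Positivity can also be seen directly: setting $\w'=\w$ in the displayed integral identity gives $\|\Delta\w\|^2=\mathbf{q}\|\nu\lrcorner d\w\|^2$. Here $\nu\lrcorner d\w\neq0$, since otherwise $\w$ would be harmonic and hence zero by Step 1.

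\emph{Main obstacle.} The delicate step is the precise regularity shift giving compactness of $K$. It requires the Agmon--Douglis--Nirenberg estimates for our mixed system, whose boundary operators $B_1,B_2,B_3$ have orders $0,1,2$ on different components. The symbol computation above is what justifies these estimates. The kernel triviality via unique continuation is the other point needing care.
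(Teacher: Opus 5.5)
Your proof is correct in substance and uses the same two ingredients the paper displays: the Shapiro--Lopatinskij computation for $(B_1,B_2,B_3)$, and the identity $(\Delta\w,\Delta\w')_{L^2(M)}=(\iota^*\Delta\w,\nu\lrcorner d\w')_{L^2(\partial M)}$ for $\w,\w'\in Z_{BSD2}$. The difference is where you place the compact operator. The paper only sets up that identity on $Z_{BSD2}$ and defers to \cite{Article1}. It thus reads (BSD2) as the eigenvalue problem for the pair of quadratic forms $\|\Delta\w\|^2_{L^2(M)}$ and $\|\nu\lrcorner d\w\|^2_{L^2(\partial M)}$ on $Z_{BSD2}$. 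This is essentially $T^*T$ for the trace map $T\w=\nu\lrcorner d\w$ on $H^2_{BSD2}(M)$ normed by $\|\Delta\w\|_{L^2(M)}$, and it yields \eqref{eq:1vpBSD2}--\eqref{eq:kvpBSD2} directly.

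Your $K$ is $TT^*$. Indeed, $T^*\beta$ is exactly the (weak) solution $\w_\beta$ of your auxiliary problem. So both operators have the same nonzero spectrum, and $\mathbf{q}_{i,p}=1/\tau_i$ is the same statement seen from the boundary. Your version gives an explicit operator on $\partial M$, the inverse of a Dirichlet-to-Neumann-type map. Its cost is exactly the step you flagged: solving the auxiliary problem with $\beta\in L^2(\partial M)$, where $\iota^*\Delta\w$ is a borderline trace.

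You can avoid that cost entirely. Define $\w_\beta$ as the Riesz representative in $H^2_{BSD2}(M)$ of $\w'\mapsto(\beta,\nu\lrcorner d\w')_{L^2(\partial M)}$, using $\|\w\|_{H^2}\le C\|\Delta\w\|_{L^2}$ when $\w_{|\partial M}=0$. Then $K=TT^*$ is compact because $T:H^2\to H^{1/2}(\partial M)\hookrightarrow L^2(\partial M)$ is. It is injective because $T$ has dense range: every smooth tangential $p$-form equals $\nu\lrcorner d\w$ for some $\w$ vanishing on $\partial M$ with $\nu\lrcorner\nabla_\nu\w=0$. This needs no low-regularity Agmon--Douglis--Nirenberg estimate and no index argument.

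Some slips to correct:
\begin{itemize}
\item The datum $\beta$ is a tangential $p$-form on $\partial M$, not a $(p-1)$-form. Both $\iota^*\Delta\w$ and $\nu\lrcorner d\w$ have degree $p$; otherwise $\beta=\mathbf{q}K\beta$ does not make sense.
\item For a second-order boundary operator the shift is $\beta\in H^s\Rightarrow\w_\beta\in H^{s+5/2}$, hence $K\beta\in H^{s+1}$. With your $H^{s+3/2}$, the trace $\nu\lrcorner d\w_\beta$ would only lie in $H^s$, and there would be no gain.
\item ``Fredholm with trivial kernel'' gives solvability only once the index is known to be zero. Here that follows from the symmetry of the Green formula \eqref{eq:IPP2} under the homogeneous conditions.
\item In the kernel computation, the term $\langle\nu\lrcorner\Delta\w,\iota^*\delta\w\rangle$ is removed by $\iota^*\delta\w=0$, not by the conditions you name.
\item The unique continuation step needs $M$ connected, or at least every component meeting $\partial M$.
\end{itemize}
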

Thus, we have the following variational characterizations, whose proofs are identical to those of the variational characterizations \eqref{eq:q_1Standard} treated in \cite[Theorem 1.17]{Article1} and \cite[Theorem 2.6]{BSFidaGeorgeOlaNicolas} :
   \begin{equation} \label{eq:1vpBSD2}
\mathbf{q}_{1,p} = \inf\left\{\frac{\lVert \Delta \w \rVert^2_{L^2(M)}}{\lVert \nu\lrcorner d\w\rVert^2_{L^2(\partial M)}}\ |\ \w\in \Omega^p(M),\ \w_{|\partial M}=0,\ \iota^*\delta\w=0\ \text{on}\ \partial M \ \text{and}\  \nabla_\nu\w\neq 0\right\}.
\end{equation} 
      For all $k\in\mathbb{N}$, the variational characterization of the $k^{\text{th}}$ eigenvalue of the BSD2 on differential forms is given by:
\begin{equation}\label{eq:kvpBSD2}
    \mathbf{q}_{k,p}=\inf_{\substack{\mathcal{V}_k\subset H^2_{BSD2}(M) \\ \dim (\mathcal{V}_k/H^2_{BSD2}(M)\cap \mathcal{V}_k)=k}}\ \underset{\w\in \mathcal{V}_k\backslash H^2_{BSD2}(M)}{\sup}\frac{\lVert \Delta \w \rVert^2_{L^2(M)}}{\lVert \nu\lrcorner d\w\rVert^2_{L^2(\partial M)}}.\end{equation}

\begin{lemma}\label{lem:q<q}
   Using the notations defined previously, we have the following inequality:
    \[
    q_{k,p} \leq \mathbf{q}_{k,p}.
    \]
\end{lemma}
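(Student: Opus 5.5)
The plan is to compare the two min-max characterizations directly. We want to show that the Rayleigh quotient of (BSD1) is pointwise bounded above by that of (BSD2) on the smaller admissible class; monotonicity of the min-max values then gives the inequality.

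\emph{Step 1: the variational characterization of $q_{k,p}$.} We first need the $k$-th eigenvalue version of \eqref{eq:q_1Standard}. It is obtained exactly as \eqref{eq:kvpBSD2}, following \cite[Theorem 2.6]{BSFidaGeorgeOlaNicolas} and \cite{Article1}:
\[
q_{k,p}=\inf_{\mathcal{V}_k}\ \sup_{\w\in\mathcal{V}_k}\frac{\lVert \Delta \w \rVert^2_{L^2(M)}}{\lVert \nu\lrcorner d\w\rVert^2_{L^2(\partial M)}+\lVert \iota^*\delta\w\rVert^2_{L^2(\partial M)}}.
\]
Here the infimum runs over $k$-dimensional subspaces (modulo forms with vanishing boundary quotient data) of $H^2_{BSD1}(M):=\{\w\in H^2(M)\,|\,\w_{|\partial M}=0\}$.

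\emph{Step 2: inclusion of admissible spaces.} We have $H^2_{BSD2}(M)\subset H^2_{BSD1}(M)$, since (BSD2) imposes the extra constraint $\iota^*\delta\w=0$.

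\emph{Step 3: comparison of quotients.} For $\w\in H^2_{BSD2}(M)$ the term $\lVert\iota^*\delta\w\rVert^2_{L^2(\partial M)}$ vanishes. Hence the (BSD1) quotient of $\w$ coincides with its (BSD2) quotient $\lVert\Delta\w\rVert^2/\lVert\nu\lrcorner d\w\rVert^2$. More generally, the BSD1 denominator always dominates the BSD2 denominator, so the BSD1 quotient is always at most the BSD2 quotient.

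\emph{Step 4: conclusion.} Every subspace $\mathcal{V}_k$ admissible in \eqref{eq:kvpBSD2} is therefore admissible for $q_{k,p}$, with a supremum at least as small. Taking the infimum yields $q_{k,p}\le\mathbf{q}_{k,p}$.

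The main obstacle is ensuring the dimension condition transfers correctly. In \eqref{eq:kvpBSD2}, the quotient is taken by forms whose denominator vanishes, i.e.\ forms with $\nu\lrcorner d\w=0$, which for $\w\in H^2_{BSD2}$ means $\nabla_\nu\w=0$ on $\partial M$. For BSD1 one discards forms with $\nu\lrcorner d\w=0$ and $\iota^*\delta\w=0$. On $H^2_{BSD2}$ these two degenerate sets coincide, because $\iota^*\delta\w=0$ already holds there. So a subspace that is $k$-dimensional modulo degenerates for BSD2 remains so for BSD1.

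I would verify this using the boundary decomposition $\nabla_\nu\w=\nu^\flat\wedge(\nu\lrcorner\nabla_\nu\w)+\iota^*\nabla_\nu\w$, valid when $\w_{|\partial M}=0$. In that case $\nu\lrcorner d\w=\iota^*\nabla_\nu\w$ and $\iota^*\delta\w=-\nu\lrcorner\nabla_\nu\w$. This is the identity already used in deriving (BSD2).

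An alternative, if the min-max formulation proves delicate, is to take the span $V$ of the first $k$ (BSD2) eigenforms and evaluate the BSD1 quotient on it. The quotient is bounded by $\mathbf{q}_{k,p}$ via the identity $\int_M\langle\Delta\w,\Delta\w'\rangle=\mathbf{q}\int_{\partial M}\langle\nu\lrcorner d\w,\nu\lrcorner d\w'\rangle$ and orthogonality of eigenforms. The min-max characterization of $q_{k,p}$ then concludes.
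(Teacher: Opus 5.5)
Your argument is correct and matches the paper's proof: $H^2_{BSD2}(M)$ sits inside the (BSD1) admissible class, and on it the (BSD1) Rayleigh quotient is no larger than the (BSD2) quotient (in fact equal, since $\iota^*\delta\w=0$), so monotonicity of the min-max values gives $q_{k,p}\le\mathbf{q}_{k,p}$. You also check that the dimension condition transfers, because the degenerate forms (those with $\nabla_\nu\w=0$ on $\partial M$) are the same for both problems on $H^2_{BSD2}(M)$; the paper leaves this implicit.
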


\begin{proof}
    First of all the quotient of $\mathbf{q}_{k,p}$ is bigger than the one of $q_{k,p}$ as we omit from the denominator the term $\lVert\iota^*\delta\w\rVert^2_{L^2(\partial M)}$. In addition , the space where $\mathbf{q}_{k,p}$ is defined is smaller than the one of $q_{k,p}$, and we know that the infimum over the larger set is the smaller, which explains the order given for the variational characterizations.
\end{proof}
\section{Eigenvalue estimates}\label{sec:eigenvalueEstimates}
In this section, we establish Kuttler-Sigillito inequalities with geometric conditions on the curvature of the manifold. We start by the following remark that will be used later on.
\begin{lemma}\label{lm:nablaXW<XnablaW}
    Let $\w \in \Omega^p(M)$ and $X \in \chi(M)$, then the following inequality holds: 
    \[
    \lvert \nabla_X\w \rvert \leq \lvert X \rvert \cdot \lvert \nabla\w \rvert.
    \]
\end{lemma}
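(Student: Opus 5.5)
The plan is to work pointwise and reduce the inequality to Cauchy--Schwarz. Write $\nabla\w$ as the section of $T^*M\otimes\Lambda^pT^*M$ given by $Y\mapsto\nabla_Y\w$, with the norm $\lvert\nabla\w\rvert^2=\sum_{i=1}^n\lvert\nabla_{e_i}\w\rvert^2$ for any local orthonormal frame $(e_i)_{i=1}^n$. Fix a point $x\in M$, and expand $X=\sum_i X^ie_i$ with $X^i=\langle X,e_i\rangle$. Since $\nabla_X\w$ is $C^\infty(M)$-linear in $X$, we get $\nabla_X\w=\sum_iX^i\nabla_{e_i}\w$ at $x$.

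Next apply the triangle inequality in $\Lambda^pT^*_xM$ and then the Cauchy--Schwarz inequality in $\mathbb{R}^n$:
\[
\lvert\nabla_X\w\rvert\leq\sum_{i=1}^n\lvert X^i\rvert\,\lvert\nabla_{e_i}\w\rvert\leq\Big(\sum_{i=1}^n (X^i)^2\Big)^{\frac12}\Big(\sum_{i=1}^n\lvert\nabla_{e_i}\w\rvert^2\Big)^{\frac12}=\lvert X\rvert\,\lvert\nabla\w\rvert .
\]
Because $x$ was arbitrary, the inequality holds on all of $M$.

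A cleaner variant is to choose the frame adapted to $X$. If $X(x)\neq0$, take $e_1=X/\lvert X\rvert$ at $x$. Then $\lvert\nabla_X\w\rvert^2=\lvert X\rvert^2\lvert\nabla_{e_1}\w\rvert^2\leq\lvert X\rvert^2\sum_i\lvert\nabla_{e_i}\w\rvert^2$. If $X(x)=0$, both sides vanish.

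The only point needing care is the normalization of $\lvert\nabla\w\rvert$. It must be the tensor norm $\sum_i\lvert\nabla_{e_i}\w\rvert^2$, which is frame-independent, and the norm on $\Lambda^p$ must be the same one used in the inner product $\langle\cdot,\cdot\rangle$. With that convention the argument has no real obstacle. If instead a convention with factors $1/p!$ were used, the same constant would appear on both sides and cancel.
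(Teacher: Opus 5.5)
Your proof is correct and is essentially the paper's argument. The paper expands $\lvert\nabla_X\omega\rvert^2$ in an orthonormal frame and bounds the cross terms to get $\bigl(\sum_j \lvert\langle X,e_j\rangle\rvert\,\lvert\nabla_{e_j}\omega\rvert\bigr)^2$, which is your triangle-inequality step squared, and then applies Cauchy--Schwarz in $\mathbb{R}^n$ exactly as you do; your adapted-frame variant with $e_1=X/\lvert X\rvert$ is an equally valid shortcut.
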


\begin{proof} Let $\{e_j\}_{1\leq j\leq n}$ be an orthonormal frame of $TM.$ We compute
   \begin{align*}
\lvert\nabla_X\w\rvert^2 &= \sum_{j,k=1}^n \langle X,e_j\rangle \cdot \langle X,e_k\rangle \cdot \langle\nabla_{e_j}\w,\nabla_{e_k}\w\rangle \\
&\leq \sum_{j,k=1}^n \lvert \langle X,e_j\rangle \rvert \cdot \lvert \langle X,e_k\rangle \rvert \cdot \lvert \nabla_{e_j}\w \rvert \cdot \lvert \nabla_{e_k}\w \rvert \\
&= \left(\sum_{j=1}^n \lvert \langle X,e_j \rangle \rvert \cdot \lvert \nabla_{e_j}\w \rvert \right)^2 \\
&\leq \left(\sum_{j=1}^n \langle X,e_j \rangle^2 \right) \cdot \left(\sum_{j=1}^n \lvert \nabla_{e_j}\w \rvert^2 \right) \\
&= \lvert X \rvert^2 \cdot \lvert \nabla\w \rvert^2.
\end{align*}
\end{proof}
For the definitions of $W^{[p]}$ and $S^{[p]}$ see \notref{not:SpWp}. Recall that $\sigma_{1,p}$ is the first non-zero eigenvalue of the Steklov operator, see \defref{def:steklov}. The following theorem gives us Inequality \eqref{eq:sigma1>mu1/mu_1}.
\begin{thm}\label{thm:sigma1>mu1...PourLaPreuve}
     Let $(M^n,g)$ be a compact Riemannian manifold with smooth boundary $\partial M$. Assume that $M$ is star-shaped with respect to $x_0$ and that $\ric_g \geq (n-1)\kappa$ where $\kappa \in \mathbb{R}$, $W^{[p]} \geq 0$ and $S^{[p]} \geq 0$. Then, for $p\geq 1$, we have the following inequality:
\begin{equation}\label{eq:sigma1>mu1/mu_1ForTheProof}
      \sigma_{1,p} > \frac{\frac{1}{2}h_{\min} \mu_{1,p}}{r_{\max} \mu_{1,p}^{\frac{1}{2}} + \frac{1}{2} \underset{M}{\max}(1 + d_{x_0}.H_\kappa \circ d_{x_0})}.
\end{equation}
\end{thm}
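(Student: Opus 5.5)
We follow the scalar strategy of Hassannezhad--Siffert, with the Rellich identity \eqref{Rellich_forms} applied to $F=\nabla\rho_{x_0}$. Let $\w$ be a Steklov eigenform for $\sigma_{1,p}$. Concretely, $\w=\hat\w$ is the tangential harmonic extension of a boundary eigenform, so $\Delta\w=0$, $\nu\lrcorner\w=0$, $-\nu\lrcorner d\w=\sigma_{1,p}\iota^*\w$ on $\partial M$, and $\iota^*\w\perp H_A^p(M)$. Since $\w$ is harmonic with $\nu\lrcorner\w=0$, \eqref{eq:IPP3} gives
\[
\|d\w\|^2+\|\delta\w\|^2=\sigma_{1,p}\|\iota^*\w\|^2_{L^2(\partial M)} .
\]
The plan is to bound the boundary energy $\int_{\partial M}(|d\w|^2+|\delta\w|^2)h$ from below by $h_{\min}$ times boundary terms controlled by $\sigma_{1,p}^2\|\w\|^2_{\partial M}$. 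We then bound the interior terms from above by $\|\nabla\w\|\,\|d\w\|$-type quantities and control these using $\mu_{1,p}$.

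First, apply \eqref{Rellich_forms} with $F=\nabla\rho_{x_0}$. The left side vanishes because $\Delta\w=0$. On $\partial M$ we have $\langle F,\nu\rangle=-h$. Decompose $d\w$ and $\delta\w$ on the boundary into tangential and normal parts. Since $\nu\lrcorner\w=0$, we know $\nu\lrcorner\delta\w$ is expressible through tangential data; in fact $\iota^*$-computations give $\nu\lrcorner\delta\w=-\delta^{\partial M}(\nu\lrcorner\w)=0$. The normal part of $d\w$ equals $-\sigma_{1,p}\iota^*\w$. The boundary terms then reduce to
\[
\tfrac12\int_{\partial M}h\,(|d\w|^2+|\delta\w|^2)
\]
plus cross terms of the form $\langle \iota^*(F\lrcorner d\w),\nu\lrcorner d\w\rangle$ and $\langle F\wedge\iota^*\delta\w,\nu\lrcorner d\w\rangle$, each bounded by $r_{\max}\,|\nabla\w|\,\sigma_{1,p}|\w|$. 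On the interior side, $\nabla F=\nabla^2\rho_{x_0}$ is symmetric, so $dF=0$. The term $-\frac12\int(|d\w|^2+|\delta\w|^2)\dive F$ is controlled by \thref{thm:laplace}(1), using $\dive F=-\Delta\rho_{x_0}\le 1+d_{x_0}H_\kappa(d_{x_0})$. The $T_F$ terms need a lower bound on the Hessian of $\rho_{x_0}$, or are absorbed otherwise; this gives the $\frac12\max(1+d_{x_0}H_\kappa\circ d_{x_0})$ factor times $\sigma_{1,p}\|\w\|^2_{\partial M}$.

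Second, the interior $\|\nabla\w\|^2$ must be compared with $\|d\w\|^2+\|\delta\w\|^2$. This is where $W^{[p]}\ge0$ and $S^{[p]}\ge0$ enter, via the Bochner/Reilly formula: for $\nu\lrcorner\w=0$ the boundary term is $\langle S^{[p]}\iota^*\w,\iota^*\w\rangle\ge0$, giving $\|\nabla\w\|^2\le\|d\w\|^2+\|\delta\w\|^2$. \lemref{lm:nablaXW<XnablaW} bounds $|\nabla_F\w|\le r_{\max}|\nabla\w|$.

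Third, to bring in $\mu_{1,p}$, use \eqref{eq:vpNeumannAbsolueForme} with $\w$ itself, after projecting away $H_A^p$ in $L^2(M)$, which does not change $d\w$ and $\delta\w$. This gives $\|\w\|^2_{L^2(M)}\le \mu_{1,p}^{-1}(\|d\w\|^2+\|\delta\w\|^2)$. Cauchy--Schwarz on the mixed terms then produces a factor $r_{\max}\mu_{1,p}^{-1/2}$. Collecting everything yields an inequality of the form
\[
h_{\min}\le 2\sigma_{1,p}\bigl(r_{\max}\mu_{1,p}^{-1/2}+\tfrac12\max(\cdots)\mu_{1,p}^{-1}\bigr),
\]
which rearranges to the claim. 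Strictness comes from equality cases in Cauchy--Schwarz being impossible for a nontrivial eigenform.

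The main obstacle is the boundary bookkeeping in the Rellich identity. We must show the mixed boundary terms can be estimated by $h$-weighted or $\sigma$-weighted quantities with the right constants. We also need the $T_F^{[p\pm1]}$ terms to have a favourable sign or be controlled solely by a Ricci lower bound. I would first try to absorb these terms using $\nabla^2\rho_{x_0}$ together with the $\dive F$ bound. If that fails, I would reorganize so that only $\dive F$ appears, via the expression $\mathcal L_F=\nabla_F+T_F$.
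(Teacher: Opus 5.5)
There is a genuine gap. The Rellich identity \eqref{Rellich_forms} cannot produce a lower bound for $\sigma_{1,p}$ in terms of $\mu_{1,p}$.

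First, for harmonic $\omega$ the left-hand side vanishes and every interior term is quadratic in $d\omega,\delta\omega$. No interior norm $\|\omega\|_{L^2(M)}$ ever appears, so \eqref{eq:vpNeumannAbsolueForme} has nothing to act on and $\mu_{1,p}$ cannot enter. Your step ``Cauchy--Schwarz on the mixed terms produces $r_{\max}\mu_{1,p}^{-1/2}$'' has no term to apply to.

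Second, the boundary side does not have the sign you assume. Substitute $\nu\lrcorner d\omega=-\sigma_{1,p}\iota^*\omega$ and write $F=-h\nu+F^T$. Then $\int_{\partial M}\langle\iota^*(F\lrcorner d\omega),\nu\lrcorner d\omega\rangle$ equals $-\sigma_{1,p}^2\int_{\partial M}h|\omega|^2-\sigma_{1,p}\int_{\partial M}\langle F^T\lrcorner d^{\partial M}\iota^*\omega,\iota^*\omega\rangle$. The total boundary contribution is therefore $-\tfrac12\sigma_{1,p}^2\int_{\partial M}h|\omega|^2+\tfrac12\int_{\partial M}h\bigl(|d^{\partial M}\iota^*\omega|^2+|\iota^*\delta\omega|^2\bigr)$ plus mixed terms. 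So the identity compares $\sigma_{1,p}^2$ with tangential boundary energy, not with interior $L^2$ quantities.

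Even granting your intended step, an inequality $h_{\min}\sigma_{1,p}^2\|\omega\|^2_{L^2(\partial M)}\lesssim\sigma_{1,p}\|\omega\|^2_{L^2(\partial M)}(\cdots)$ is an \emph{upper} bound on $\sigma_{1,p}$, the wrong direction.

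The obstacles you flagged are also real, not technical. The mixed boundary terms involve tangential derivatives of $\omega$ on $\partial M$, which no interior norm controls. The $T_F^{[p\pm1]}$ terms are partial traces of $\nabla^2\rho_{x_0}$ and in general need Hessian comparison, i.e.\ sectional curvature bounds. A Ricci lower bound only controls $\Delta\rho_{x_0}$.

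What is missing is a \emph{trace inequality} bounding $\|\omega\|^2_{L^2(\partial M)}$ by interior quantities, from the first-order identity obtained by applying the divergence theorem to $\tfrac12|\omega|^2\nabla\rho_{x_0}$. Take a $\sigma_{1,p}$-eigenform $u\perp_{L^2(\partial M)}H_A^p(M)$. Let $\omega$ be its $L^2(M)$-projection onto $H_A^p(M)^\perp$, so $\nu\lrcorner\omega=0$, $d\omega=du$ and $\delta\omega=\delta u$. Then
\[
\tfrac12\int_{\partial M}h|\omega|^2\,d\mu_g=\int_M\langle\nabla_{\nabla\rho_{x_0}}\omega,\omega\rangle\,d\mu_g-\tfrac12\int_M|\omega|^2\Delta\rho_{x_0}\,d\mu_g .
\]
Only $\Delta\rho_{x_0}$ appears, so \thref{thm:laplace} gives the $\tfrac12\max_M(1+d_{x_0}H_\kappa\circ d_{x_0})\|\omega\|^2_{L^2(M)}$ term. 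Your ingredients fit the first term: \lemref{lm:nablaXW<XnablaW} bounds it by $r_{\max}\|\omega\|_{L^2(M)}\|\nabla\omega\|_{L^2(M)}$. Reilly's formula with $W^{[p]},S^{[p]}\geq0$ gives $\|\nabla\omega\|^2\le\|d\omega\|^2+\|\delta\omega\|^2$. Now \eqref{eq:vpNeumannAbsolueForme} genuinely applies to $\|\omega\|_{L^2(M)}$.

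This yields $\tfrac12h_{\min}\|\omega\|^2_{L^2(\partial M)}\le\bigl(r_{\max}\mu_{1,p}^{-1/2}+\tfrac12\max_M(\cdots)\mu_{1,p}^{-1}\bigr)\sigma_{1,p}\|u\|^2_{L^2(\partial M)}$. Since the projection changes the boundary trace, you also need $\|\omega\|_{L^2(\partial M)}\geq\|u\|_{L^2(\partial M)}$. This follows from $u\perp_{L^2(\partial M)}H_A^p(M)$, and your sketch does not address it.
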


\begin{remark}
 We have particular cases, where the hypothesis of \thref{thm:sigma1>mu1...PourLaPreuve} can be reformulated as follows:
 \begin{enumerate}[label=\roman*.]
     \item  If $p = 1$, the hypothesis of the theorem expresses into the following way: $W^{[p]} = \operatorname{Ric}_g \geq 0$, $S^{[p]}=S \geq 0$ and $\kappa > 0$.
     \item  If $p = 1$, $W^{[p]} = \ric_g \geq 0$, $S^{[p]} \geq 0$ and $\kappa \leq 0$, the best lower bound to be used is with $H_0$, so the inequality becomes
\begin{equation*}
      \sigma_{1,p} > \frac{\frac{1}{2}h_{\min} \mu_{1,p}}{r_{\max} \mu_{1,p}^{\frac{1}{2}} + \frac{1}{2} \underset{M}{\max}(1 + d_{x_0}.H_0 \circ d_{x_0})}.
\end{equation*}
 \end{enumerate}
\end{remark}

\begin{proof}[Proof of \thref{thm:sigma1>mu1...PourLaPreuve}]
  Let $u \in \Omega^p(M)$ be a $\sigma_{1,p}$-eigenform such that $u$ is ${L^2(\partial M)}$-orthogonal to $H_A^p(M)$ and let $\w := \proj_{\perp_{L^2(M)}}(u)$ in $H_A^p(M)^\perp$. We use the following identity which follows from integration by parts:
 \[\int_M\frac{1}{2}\lvert\w\rvert^2\Delta\rho_{x_0} d\mu_g-\int_M\langle\nabla\rho_{x_0},\nabla(\frac{1}{2}\lvert\w\rvert^2)\rangle d\mu_g=\frac{1}{2}\int_{\partial M}\lvert\w\rvert^2\langle\nu,\nabla\rho_{x_0}\rangle d\mu_g,\] and since $X(\frac{1}{2}\lvert \w\rvert^2)=\langle\nabla_X\w,\w\rangle$, replacing $X$ by $\nabla\rho_{x_0}$ we get \[\int_M\frac{1}{2}\lvert\w\rvert^2\Delta\rho_{x_0} d\mu_g-\int_M\langle\nabla_{\nabla\rho_{x_0}}\w,\w\rangle d\mu_g=\frac{1}{2}\int_{\partial M}\lvert\w\rvert^2\cdot\partial_\nu\rho_{x_0} d\mu_g.\] Using \thref{thm:laplace}, we get \begin{align}
-\frac{1}{2} \int_{\partial M} \lvert \w \rvert^2 \cdot \partial_\nu \rho_{x_0} \, d\mu_g \nonumber
&\leq \int_M \langle \nabla_{\nabla \rho_{x_0}} \w, \w \rangle \, d\mu_g + \frac{1}{2} \int_M \lvert \w \rvert^2 (1 + d_{x_0}.H_\kappa \circ d_{x_0}) \, d\mu_g \nonumber\\
&\leq \int_M \langle \nabla_{\nabla \rho_{x_0}} \w, \w \rangle \, d\mu_g + \frac{1}{2} \underset{M}{\max}(1 + d_{x_0}.H_\kappa \circ d_{x_0}) \int_M \lvert \w \rvert^2 \, d\mu_g. \label{eq:preuveineg1}
\end{align}
Now, according to \lemref{lm:nablaXW<XnablaW}, we obtain $\lvert \langle \nabla_{\nabla \rho_{x_0}} \w, \w \rangle \rvert \leq \lvert \nabla \rho_{x_0} \rvert \cdot \lvert \nabla \w \rvert \cdot \lvert \w \rvert$. Notice that $\lvert \nabla\rho_{x_0}\rvert=d_{x_0}$. We then compute using Hölder inequality:
\begin{align}
\left( \int_M \langle \nabla_{\nabla \rho_{x_0}} \w, \w \rangle \, d\mu_g \right)^2 
&\leq \left(\int_M d_{x_0}\lvert\nabla\w\rvert \cdot\lvert \w \rvert \ d\mu_g\right)^2 \nonumber\\ \nonumber
&\leq \int_M \lvert \w \rvert^2 \, d\mu_g \int_M d_{x_0}^2 \lvert \nabla \w \rvert^2 \, d\mu_g \\ \nonumber
&\leq \left(\underset{M}{\max} \ d_{x_0}^2\right) \left(\int_M \lvert \w \rvert^2 \, d\mu_g\right)\left( \int_M \lvert \nabla \w \rvert^2 \, d\mu_g\right) \\ 
&= r_{\max}^2 \left(\int_M \lvert \w \rvert^2 \, d\mu_g\right)\left( \int_M \lvert \nabla \w \rvert^2 \, d\mu_g\right).\label{eq:preuveineg2}\end{align} 

According to Reilly's formula \cite[Theorem 3]{RaulotSavo2}, and for $\nu \lrcorner \w = 0$, we have:
 \[\left(\int_M \lvert d\w\rvert^2+\lvert\delta\w\rvert^2\right)d\mu_g=\int_M\lvert\nabla\w\rvert^2d\mu_g+ \int_M\langle W^{[p]}\w,\w\rangle d\mu_g+\int_{\partial M} \langle S^{[p]}(\iota^*\w),\iota^*\w\rangle d\mu_g.\]
Hence, for $S^{[p]}\geq 0$ and $W^{[p]}\geq 0$ we find\begin{equation}\label{eq:preuveineg3}
\int_M\lvert\nabla\w\rvert^2 d\mu_g\leq \int_M \left(\lvert d\w\rvert^2+\lvert\delta\w\rvert^2\right)d\mu_g.\end{equation} 
Plugging Equations \eqref{eq:preuveineg2} and \eqref{eq:preuveineg3} into \eqref{eq:preuveineg1}, we get
\begin{align*}
\frac{1}{2}\int_{\partial M}h_{\min}\lvert\w\rvert^2 d\mu_g&\leq-\frac{1}{2}\int_{\partial M} \lvert \w \rvert^2 \partial_\nu \rho_{x_0} \, d\mu_g 
\leq r_{\max} \left( \int_M \lvert \w \rvert^2 \, d\mu_g \right)^{\frac{1}{2}} \left( \int_M \left(\lvert d\w \rvert^2 + \lvert \delta \w \rvert^2\right) \, d\mu_g \right)^{\frac{1}{2}} \\
&\quad + \frac{1}{2} \underset{M}{\max}(1 + d_{x_0}.H_\kappa \circ d_{x_0}) \int_M \lvert \w \rvert^2 \, d\mu_g \\
&\overset{\eqref{eq:vpNeumannAbsolueForme}}{\leq} r_{\max} \mu_{1,p}^{-\frac{1}{2}} \int_M \lvert \left(d\w \rvert^2 + \lvert \delta \w \rvert^2\right) \, d\mu_g \\
&\quad + \frac{1}{2} \underset{M}{\max}(1 + d_{x_0}.H_\kappa \circ d_{x_0}) \mu_{1,p}^{-1} \int_M \left(\lvert d\w \rvert^2 + \lvert \delta \w \rvert^2\right) \, d\mu_g\\
&\leq \left(r_{\max} \mu_{1,p}^{-\frac{1}{2}}+ \frac{1}{2} \underset{M}{\max}(1 + d_{x_0}.H_\kappa \circ d_{x_0}) \mu_{1,p}^{-1} \right)\int_M \left(\lvert d\w \rvert^2 + \lvert \delta \w \rvert^2\right) \, d\mu_g.
\end{align*}
Recall that $u=\w+u_0$ with $u_0\in H_A^p(M)$ and $\w\perp_{L^2(M)}u_0,$ then \begin{equation}\label{eq:u=w+u0}
\lVert\w\rVert^2_{L^2(\partial M)}=\lVert u-u_0\rVert^2_{L^2(\partial M)}=\lVert u\rVert^2_{L^2(\partial M)}+\lVert u_0\rVert^2_{L^2(\partial M)}-2\underbrace{(u,u_0)_{L^2(\partial M)}}_{0}\geq \lVert u\rVert^2_{L^2(\partial M).}\end{equation} Hence we find

\begin{align}
\frac{1}{2} h_{\min} \int_{\partial M} \lvert u \rvert^2 \, d\mu_g \nonumber
&\leq \frac{1}{2} h_{\min} \int_{\partial M} \lvert \w \rvert^2 \, d\mu_g \\ \nonumber
&\leq \left( r_{\max} \mu_{1,p}^{-\frac{1}{2}} + \frac{1}{2} \underset{M}{\max}(1 + d_{x_0}.H_\kappa \circ d_{x_0}) \mu_{1,p}^{-1} \right) \int_M \left(\lvert d\w \rvert^2 + \lvert \delta \w \rvert^2\right) \, d\mu_g \\\nonumber
&= \left( r_{\max} \mu_{1,p}^{-\frac{1}{2}} + \frac{1}{2} \underset{M}{\max}(1 + d_{x_0}.H_\kappa \circ d_{x_0}) \mu_{1,p}^{-1} \right) \int_M \left(\lvert d u \rvert^2 + \lvert \delta u \rvert^2 \right)\, d\mu_g\\ \nonumber
&\overset{\eqref{eq:sigma1formesSteklov}}{=}\sigma_{1,p} \left( r_{\max} \mu_{1,p}^{-\frac{1}{2}} + \frac{1}{2} \underset{M}{\max}(1 + d_{x_0}.H_\kappa \circ d_{x_0}) \mu_{1,p}^{-1} \right)\int_{\partial M}\lvert u \rvert^2 d\mu_g.\nonumber
\end{align}
Thus, we deduce the following inequality:
\begin{equation*}
      \sigma_{1,p}\geq \frac{\frac{1}{2}h_{\min} \mu_{1,p}}{r_{\max}\mu_{1,p}^{\frac{1}{2}}+\frac{1}{2}\underset{M}{\max}(1+d_{x_0}.H_\kappa\circ d_{x_0})}.\end{equation*}

Next, we show that this inequality is strict. Suppose that
\begin{equation*}
\sigma_{1,p} = \frac{\frac{1}{2} h_{\min} \, \mu_{1,p}}{r_{\max} \mu_{1,p}^{\frac{1}{2}} + \frac{1}{2} \underset{M}{\max}\bigl(1 + d_{x_0} \cdot H_\kappa \circ d_{x_0}\bigr)}.
\end{equation*}
Then we have equalities in all the above inequalities, which leads to \(\omega = u\), 
resulting in a contradiction, because \(u\) would simultaneously be a Steklov and a Neumann eigenform and therefore must be zero. Hence, the inequality must be strict.

\end{proof}
\begin{cor}
    If we let $M$ to be a bounded domain in $\mathbb{R}^n$ with convex smooth boundary $\partial M$. Then, for $p\geq 1,$ we have the following inequality: 
    \[\sigma_{1,p}>\frac{\frac{1}{2}h_{\min}\mu_{1,p}}{r_{\max}\mu_{1,p}^{\frac{1}{2}}+\frac{n}{2}}.\]
    \end{cor}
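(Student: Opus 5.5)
The plan is to specialize \thref{thm:sigma1>mu1...PourLaPreuve} to a Euclidean domain and check that its hypotheses hold. We take $(M,g)$ to be the closure of a bounded domain in $\mathbb{R}^n$ with the flat metric. A convex domain is star-shaped with respect to any interior point $x_0$: the segment joining $x_0$ to any $x\in M$ stays in $M$, and $\exp_{x_0}$ is just translation, hence a diffeomorphism from the corresponding domain of $T_{x_0}M\simeq\mathbb{R}^n$ onto $M$. Convexity also gives $\langle \nabla\rho_{x_0},\nu\rangle\le 0$ with $\nu$ inward, so $h\ge0$, as the setting requires. We fix such an $x_0$ and keep $h_{\min}$, $r_{\max}$ relative to it, exactly as in the statement.

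Next come the curvature hypotheses. Since the metric is flat, $\ric_g=0$, so $\ric_g\ge(n-1)\kappa$ holds with $\kappa=0$. For the same reason the full Riemann tensor vanishes, and the Weitzenböck term satisfies $W^{[p]}=0\ge0$ in every degree $p$. For the boundary term: convexity of $\partial M$, with $S(X)=-\nabla_X\nu$ and $\nu$ the inward normal, means that the shape operator $S$ is nonnegative. Its extension $S^{[p]}$ acts on $p$-forms with eigenvalues equal to the sums of $p$ eigenvalues of $S$, so $S^{[p]}\ge0$ for all $p\ge1$. This sign convention is the one point I would double-check against the Reilly formula of \cite{RaulotSavo2}. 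It is consistent with the unit ball, where $S=\mathrm{Id}$ for the inward normal.

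Finally, with $\kappa=0$ we have $H_0(r)=\frac{n-1}{r}$, so $1+d_{x_0}\,H_0(d_{x_0})=1+(n-1)=n$ away from $x_0$. Hence the term $\frac12\max_M(1+d_{x_0}.H_\kappa\circ d_{x_0})$ equals $\frac n2$. Indeed, in $\mathbb{R}^n$ one has directly $-\Delta\rho_{x_0}=n$ with the positive-Laplacian convention, so the apparent singularity at $x_0$ is harmless. Substituting into \eqref{eq:sigma1>mu1/mu_1ForTheProof} gives the claimed strict inequality.

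The only real subtlety is making sure the sign conventions for $S$, $\nu$ and $h$ agree, so that convexity gives $S^{[p]}\ge0$ rather than $\le0$. Everything else is a direct substitution.
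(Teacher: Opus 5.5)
Your proposal is correct and follows the paper's own route: the paper likewise obtains the corollary by specializing \thref{thm:ineg1} to $\kappa=0$, where $H_0(r)=\frac{n-1}{r}$ gives $1+d_{x_0}H_0(d_{x_0})=n$. Your explicit checks — star-shapedness and $S^{[p]}\geq 0$ from convexity with the inward-normal convention, and $W^{[p]}=0$ from flatness — simply fill in hypotheses that the paper leaves implicit.
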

    This holds since $H_\kappa(r)=\frac{n-1}{r}$ on $\mathbb{R}^n$. In particular, on the ball of radius $r$, the inequality becomes
  \[\sigma_{1,p}>\frac{{r}\mu_{1,p}}{2r\mu_{1,p}^{\frac{1}{2}}+{n}}.\] 

Let us show Inequality \eqref{eq:th1.4(i,2)} of \thref{thm:ineg2}.
\begin{thm}\label{thm:inegalité2}
Let $(M^n,g)$ be a compact Riemannian manifold. Assume that $M$ is star-shaped with respect to $x_0\in M$ and that its sectional curvature $K_g$ satisfies $\kappa_1 \leq K_g \leq \kappa_2$. Then, for all $k\in\mathbb{N}$, $p\geq 1$, there exists a constant $C_2 = C_2(p,n,\kappa_1,\kappa_2)$ such that:
\begin{equation}\label{eq:th1.4(i,2)PourLaPreuve}
     \lambda_{k,p}\leq \frac{4\mathbf{q}^2_{k,p}r^2_{\max}+2\mathbf{q} _{k,p}h_{\min} C_2}{h^2_{\min}}.
\end{equation}
\end{thm}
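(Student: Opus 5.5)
We follow the scalar argument of Hassannezhad–Siffert and combine the Rellich identity (Theorem \ref{theo:Rellich}) with the variational characterizations \eqref{eq:vpKAlternativeDirichletForme} and \eqref{eq:kvpBSD2}, taking $F=\nabla\rho_{x_0}$.

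\textbf{Step 1 (test space).} Let $V_k$ be the span of the first $k$ eigenforms $\w_1,\dots,\w_k$ of the Dirichlet problem \eqref{eq:dirichletFormes}. Each $\w\in V_k$ satisfies $\w_{|\partial M}=0$. In order to use $V_k$ in \eqref{eq:kvpBSD2}, we also need $\iota^*\delta\w=0$. For Dirichlet eigenforms, $\iota^*\delta\w_i=\lambda_{i,p}^{-1}\iota^*\delta\Delta\w_i$, which is not obviously zero. We therefore argue the other way round: take $\mathcal{V}_k$ spanned by the first $k$ eigenforms of (BSD2), which lie in $H^2_{BSD2}(M)$. On this space $\|\Delta\w\|^2_{L^2(M)}\le \mathbf{q}_{k,p}\|\nu\lrcorner d\w\|^2_{L^2(\partial M)}$. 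Since $\mathcal{V}_k\subset H^2(M)\cap H^1_0(M)$, the characterization \eqref{eq:vpKAlternativeDirichletForme} gives
\[
\lambda_{k,p}\le \sup_{\w\in\mathcal{V}_k}\frac{\|\Delta\w\|^2}{\|d\w\|^2+\|\delta\w\|^2}.
\]
It therefore suffices to prove, for every $\w\in\mathcal{V}_k$, a bound of the form
\[
\|\nu\lrcorner d\w\|^2_{L^2(\partial M)}\le \frac{2r_{\max}}{h_{\min}}\|\Delta\w\|\,\big(\|d\w\|^2+\|\delta\w\|^2\big)^{1/2}+\frac{C_2}{h_{\min}}\big(\|d\w\|^2+\|\delta\w\|^2\big).\qquad(\ast)
\]
Once $(\ast)$ holds, set $X=\|\Delta\w\|^2/(\|d\w\|^2+\|\delta\w\|^2)$. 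Combining $(\ast)$ with $\|\Delta\w\|^2\le\mathbf{q}_{k,p}\|\nu\lrcorner d\w\|^2$ gives $h_{\min}X\le 2\mathbf{q}_{k,p}r_{\max}X^{1/2}+\mathbf{q}_{k,p}C_2$. The elementary inequality $2ab\le \frac{h}{2}a^2+\frac{2}{h}b^2$ then yields $X\le (4\mathbf{q}^2_{k,p}r^2_{\max}+2\mathbf{q}_{k,p}h_{\min}C_2)/h^2_{\min}$, which is \eqref{eq:th1.4(i,2)PourLaPreuve}.

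\textbf{Step 2 (Rellich with boundary simplifications).} Apply \eqref{Rellich_forms} with $F=\nabla\rho_{x_0}$. The conditions $\w_{|\partial M}=0$ and $\iota^*\delta\w=0$ give $d\w=\nu^\flat\wedge\nu\lrcorner d\w$ on $\partial M$, so $\iota^*(F\lrcorner d\w)=\langle F,\nu\rangle\nu\lrcorner d\w$ and $|d\w|^2=|\nu\lrcorner d\w|^2$ there. Moreover $\delta\w=-\nu\lrcorner\nabla_\nu\w$ has zero tangential part, so $\delta\w=\nu^\flat\wedge(\nu\lrcorner\delta\w)$. Hence $|\delta\w|^2=|\nu\lrcorner\delta\w|^2$, the term $F\wedge\iota^*\delta\w$ vanishes, and $\iota^*(F\lrcorner\delta\w)=\langle F,\nu\rangle\nu\lrcorner\delta\w$. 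The boundary terms then collapse to
\[
\tfrac12\int_{\partial M}\langle F,\nu\rangle\big(|\nu\lrcorner d\w|^2+|\nu\lrcorner\delta\w|^2\big).
\]
Because $\langle F,\nu\rangle=-h\le -h_{\min}$, this yields a lower bound $\tfrac12 h_{\min}\|\nu\lrcorner d\w\|^2_{\partial M}$ for the quantity $-$(boundary terms), after discarding the nonnegative $\delta$-part.

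\textbf{Step 3 (interior terms).} On the left of \eqref{Rellich_forms}, Cauchy–Schwarz and $|F|=d_{x_0}\le r_{\max}$ give a bound $r_{\max}\|\Delta\w\|(\|d\w\|^2+\|\delta\w\|^2)^{1/2}$. On the right, $\nabla F=\nabla^2\rho_{x_0}$. Since $M$ is star-shaped, Theorem \ref{thm:hess} and Theorem \ref{thm:laplace} (b) bound $\operatorname{div}F=-\Delta\rho_{x_0}$ and the operators $T_F^{[p\pm1]}$ in terms of $tH_{\kappa_i}(t)$ on $[0,r_{\max}]$. Because $\nabla^2\rho_{x_0}$ is symmetric, $dF=0$ and the term $\langle\delta\w,dF\lrcorner d\w\rangle$ drops out. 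This gives the interior contribution $\le \tfrac{C_2}{2}(\|d\w\|^2+\|\delta\w\|^2)$ with $C_2$ depending on $p,n,\kappa_1,\kappa_2$ (and implicitly on $r_{\max}$). Substituting into Step 2 gives $(\ast)$.

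\textbf{Main obstacle.} The hard step is the lower Hessian bound in Theorem \ref{thm:hess}. It only holds for $t\le \pi/(2\sqrt{\kappa_2\vee0})$, so for $\kappa_2>0$ we must either assume $r_{\max}$ lies in that range or absorb it into $C_2$. We also need the sign of $\tfrac12\operatorname{div}F\cdot|\cdot|^2-\langle T_F\cdot,\cdot\rangle$ to be controlled, with the $T_F^{[p\pm1]}$ eigenvalues lying between $(p\pm1)$ times the Hessian bounds. We will try first to bound everything crudely in absolute value by $\tfrac{C_2}{2}$, which is sufficient here because only an upper bound is needed.
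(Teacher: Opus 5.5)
Your proposal is correct and follows essentially the paper's route: the span of the first $k$ (BSD2) eigenforms as test space in \eqref{eq:vpKAlternativeDirichletForme}, then Rellich \eqref{Rellich_forms} with $F=\nabla\rho_{x_0}$ with the boundary terms collapsing to $\frac12\int_{\partial M}\langle F,\nu\rangle\lvert\nu\lrcorner d\w\rvert^2 d\mu_g$, then comparison theorems for the interior terms and a quadratic inequality (you close it by AM--GM in $X$ rather than via the larger root in $\mathcal{A}$, which gives the same constant). Two small points: on $\partial M$, $\delta\w=-\nu\lrcorner\nabla_\nu\w$ has zero \emph{normal} part, so together with $\iota^*\delta\w=0$ it vanishes there and the $\delta$-terms you discard are in fact zero; and to get $r_{\max}$ rather than $\sqrt{2}\,r_{\max}$ in the Cauchy--Schwarz step you must write the left side as $\int_M\langle\Delta\w,F\lrcorner d\w+F\wedge\delta\w\rangle d\mu_g$ and use $\langle F\lrcorner d\w,F\wedge\delta\w\rangle=\langle d\w,F\wedge F\wedge\delta\w\rangle=0$, as the paper does.
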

Let $(e_{j_k})_{j_k}$ be a local eigenvector basis of $\nabla^2 \rho_{x_0}$, such that $\nabla^2 \rho_{x_0}\, e_{j_k} = \gamma_{j_k}\, e_{j_k}.$
For $p \geq 1$, in order to bound the term $\langle T^{[p]}\cdot,\cdot\rangle$ appearing in the Rellich identity \eqref{Rellich_forms}, we define
\[
\beta_{\min,p}(y)
:= \min_{j_1 < \cdots < j_p} \sum_{k=1}^p \gamma_{j_k}(y)
\qquad \text{and} \qquad
\beta_{\max,p}(y)
:= \max_{j_1 < \cdots < j_p} \sum_{k=1}^p \gamma_{j_k}(y).
\]

\begin{lemma}
    Let $(M^n,g)$ be a compact Riemannian manifold. Assume that $M$ is star-shaped with respect to $x_0\in M$ and that its sectional curvature $K_g$ satisfies $\kappa_1 \leq K_g \leq \kappa_2$. Then we get \begin{equation}\label{eq:beta}
  \frac{p}{n-1}\underset{M}{\min}\ (d_{x_0}.H_{\kappa_2}(d_{x_0}))\leq  \beta_{\max,p}\leq \frac{p}{n-1}\underset{M}{\max}\ (d_{x_0}.H_{\kappa_1}(d_{x_0})).
\end{equation}
\end{lemma}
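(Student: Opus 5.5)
The plan is to bound the eigenvalues $\gamma_j(y)$ of $\nabla^2\rho_{x_0}$ pointwise using the Hessian comparison \thref{thm:hess}, and then add them up. Fix $y\in M$ and set $t=d_{x_0}(y)$. Because $M$ is star-shaped with respect to $x_0$, the point $y$ is joined to $x_0$ by a minimal geodesic $\gamma$ with $\gamma(t)=y$, and this geodesic avoids the cut locus since $\exp_{x_0}$ is a diffeomorphism on $\Omega$. The curvature bound $\kappa_1\le K_g\le\kappa_2$ gives the sectional curvature hypothesis of \thref{thm:hess}.

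First we analyse the radial direction. Since $\nabla\rho_{x_0}=d_{x_0}\nabla d_{x_0}$ and $\nabla^2 d_{x_0}(\gamma',\cdot)=0$, we get $\nabla^2\rho_{x_0}(\gamma',\gamma')=|\nabla d_{x_0}|^2=1$. Hence $\gamma'(t)$ is an eigenvector with eigenvalue $1$, and the orthogonal complement $\gamma'(t)^\perp$ is invariant under $\nabla^2\rho_{x_0}$. So we may take $e_1=\gamma'$, $\gamma_1=1$, and let $e_2,\dots,e_n$ be the eigenvectors in $\gamma'(t)^\perp$.

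Next we bound the tangential eigenvalues. For $j\ge2$, part 2 of \thref{thm:hess} gives
\[\frac{tH_{\kappa_2}(t)}{n-1}\le\gamma_j\le\frac{tH_{\kappa_1}(t)}{n-1}.\]
The lower bound requires $t\le \pi/(2\sqrt{\kappa_2\vee0})$. I will assume this, either as the implicit range where $H_{\kappa_2}$ is meaningful or by reading $\kappa_2\le0$. Note that $tH_\kappa(t)/(n-1)$ tends to $1$ as $t\to0$. The comparison then shows $tH_{\kappa_2}(t)/(n-1)\le 1\le tH_{\kappa_1}(t)/(n-1)$, and in any case the value $1$ lies between these two bounds. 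Therefore every $\gamma_j$, including $\gamma_1$, lies in the interval $\bigl[\tfrac{tH_{\kappa_2}(t)}{n-1},\tfrac{tH_{\kappa_1}(t)}{n-1}\bigr]$.

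Finally we sum. Any sum of $p$ distinct eigenvalues lies between $p$ times the lower bound and $p$ times the upper bound. In particular, the maximal sum $\beta_{\max,p}(y)$ satisfies
\[\frac{p}{n-1}\,tH_{\kappa_2}(t)\le\beta_{\max,p}(y)\le\frac{p}{n-1}\,tH_{\kappa_1}(t).\]
Taking the minimum of the left side over $M$ and the maximum of the right side over $M$ gives \eqref{eq:beta}. The same argument also bounds $\beta_{\min,p}$.

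The main obstacle is the radial eigenvalue $\gamma_1=1$, which \thref{thm:hess} does not control directly. It has to be justified by the Riccati/Sturm comparison between $tH_\kappa(t)/(n-1)$ and $1$: the quantity $t\sqrt\kappa\cot(t\sqrt\kappa)\le1$ for $\kappa>0$ and $\ge1$ for $\kappa<0$. The awkward case is when $\kappa_1>0$ or $\kappa_2<0$, where $1$ may fall outside the interval. There I would instead use the inequality only through $\min$ and $\max$ over $M$, where $t\to0$ near $x_0$ gives the value $1$, or else absorb the discrepancy into the stated min/max. A secondary technical point is the range of validity of the lower Hessian bound when $\kappa_2>0$.
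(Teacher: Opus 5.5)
Your strategy is the same as the paper's: diagonalize $\nabla^2\rho_{x_0}$, bound its eigenvalues with \thref{thm:hess}, and sum $p$ of them. The paper's proof simply cites \thref{thm:hess} and never treats the radial direction, so you are right that the radial eigenvalue $\gamma_1=1$ needs separate handling. However, your main-line treatment of it is false. In the step bounding the tangential eigenvalues you assert $\frac{tH_{\kappa_2}(t)}{n-1}\le 1\le \frac{tH_{\kappa_1}(t)}{n-1}$ ``in any case''. This holds only when $\kappa_1\le 0\le \kappa_2$.

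Consequently the pointwise inequality in your summation step is false in general. Take a geodesic ball of radius $r<\pi/2$ in the unit sphere, so $\kappa_1=\kappa_2=1$, and a point $y$ with $t=d_{x_0}(y)\in(0,r]$. The eigenvalues of $\nabla^2\rho_{x_0}$ at $y$ are $1$ and $t\cot t<1$, the latter with multiplicity $n-1$. Hence $\beta_{\max,p}(y)=1+(p-1)t\cot t>p\,t\cot t=\frac{p}{n-1}tH_{1}(t)$. Similarly, in hyperbolic space the radial eigenvalue $1$ is smaller than $t\coth t$, so your remark that ``the same argument'' gives a pointwise lower bound for $\beta_{\min,p}$ also fails.

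The repair is the one you list only as a fallback, and it should be the argument itself; the vaguer option of absorbing the discrepancy is unnecessary. Set $a:=\frac{1}{n-1}\min_M d_{x_0}H_{\kappa_2}(d_{x_0})$ and $b:=\frac{1}{n-1}\max_M d_{x_0}H_{\kappa_1}(d_{x_0})$.

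\begin{itemize}
\item Since $x_0\in M$ and $rH_\kappa(r)\to n-1$ as $r\to 0$, we get $a\le 1\le b$, so the radial eigenvalue lies in $[a,b]$.
\item The tangential eigenvalues at $y$ lie in $\bigl[\frac{tH_{\kappa_2}(t)}{n-1},\frac{tH_{\kappa_1}(t)}{n-1}\bigr]\subset[a,b]$.
\item Thus every $\gamma_j(y)\in[a,b]$, and every sum of $p$ distinct eigenvalues lies in $[pa,pb]$. This covers $\beta_{\max,p}(y)$ and $\beta_{\min,p}(y)$ and gives \eqref{eq:beta}.
\end{itemize}

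This is exactly why the lemma must be read with global $\min/\max$ over $M$ rather than pointwise.

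Your caveat about the range $t\le \pi/(2\sqrt{\kappa_2\vee 0})$ for the lower Hessian bound is legitimate, and it is equally implicit in the paper. Note, though, that only the upper bound of \eqref{eq:beta} is used later, applied to $\beta_{\max,p\pm1}$ in the proof of \thref{thm:ineg2}. That upper bound needs no such restriction.
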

\begin{proof}
We compute the following for $F=\nabla\rho_{x_0}$:
\begin{align*}
    (T_F^{[p]}\w)(e_{j_1},\ldots,e_{j_p})&=\sum_{k=1}^p\w(e_{j_1},\ldots,\nabla_{e_{j_k}}F,\ldots,e_{j_p})\\
    &=\sum_{k=1}^p\w(e_{j_1},\ldots,\nabla^2\rho_{x_0}({e_{j_k}}),\ldots,e_{j_p})\\
    &=\sum_{k=1}^p\gamma_{j_k}\w(e_{j_1},\ldots,e_{j_p}).
\end{align*}

Thus, we can write the following bounds using \thref{thm:hess}: 

\begin{equation}\label{eq:encadrementBetta}
    \underbrace{\left(\underset{{j_1<\ldots<j_p}}{\min}\sum_{k=1}^p\gamma_{j_k}(y)\right)}_{\beta_{\min,p}(y)}\lVert\w\rVert^2_{L^2(M)}\leq \langle T_F^{[p]}\w,\w\rangle(y)\leq \underbrace{\left(\underset{{j_1<\ldots<j_p}}{\max}\sum_{k=1}^p\gamma_{j_k}(y)\right)}_{\beta_{\max,p}(y)}\lVert\w\rVert^2_{L^2(M)}.\end{equation}
\end{proof}
\begin{proof}[Proof of \thref{thm:inegalité2}]
Let $\w_1,\ldots,\w_k \in H^2(M)$ be $\mathbf{q}_{1,p},\ldots,\mathbf{q}_{k,p}$-eigenforms and $E_k = \spann(\w_1,\ldots,\w_k)$. Thus for any $\w\in E_k$ \begin{align}\label{eq:lamda<qsup}
\lambda_{k,p}  &\overset{\eqref{eq:vpKAlternativeDirichletForme}}{\leq} \underset{\w \in E_k}{\sup}  
\frac{\lVert \Delta \w \rVert^2_{L^2(M)}}{\lVert d\w \rVert^2_{L^2(M)} + \lVert \delta\w \rVert^2_{L^2(M)}} \nonumber \\
&\overset{\eqref{eq:kvpBSD2}}{\leq} \mathbf{q}_{k,p} \underset{\w \in E_k}{\sup} 
\frac{\lVert \nu \lrcorner d\w \rVert^2_{L^2(\partial M)}}{\lVert d\w \rVert^2_{L^2(M)} + \lVert \delta\w \rVert^2_{L^2(M)}} \notag \\
&= \mathbf{q}_{k,p}  \underset{\w \in E_k}{\sup}  
\frac{\lVert \nabla_\nu \w \rVert^2_{L^2(\partial M)}}{\lVert d\w \rVert^2_{L^2(M)} + \lVert \delta\w \rVert^2_{L^2(M)}}.
\end{align}

We take $F=\nabla\rho_{x_0},$ the Rellich identity \eqref{Rellich_forms} gives us\begin{eqnarray}\label{eq:rellichdanslapreuveDeLinegalite} &\int_{M}\langle \Delta \omega , F\lrcorner d \omega \rangle d\mu_g +\int_M \langle \delta \omega , F\lrcorner \Delta \omega \rangle d\mu_g \nonumber \\
&=-\dfrac{1}{2}\int_{\partial M} (|d \omega|^2 +|\delta \omega|^2) \langle F,\nu \rangle d\mu_g\nonumber + \int_{\partial M} \langle F \wedge i^* (\delta \omega), \nu \lrcorner d \omega \rangle d\mu_g +\int_{\partial M} \langle i^* (F\lrcorner d \omega), \nu \lrcorner d \omega \rangle d\mu_g + \int_{\partial M} \langle i^* (F\lrcorner \delta \omega), \nu \lrcorner \delta \omega \rangle d\mu_g\\
&-\dfrac{1}{2}\int_{M} (|d \omega|^2 +|\delta \omega|^2) \dive\, F d\mu_g+ \int_{M} \langle \delta \omega, dF \lrcorner d \omega \rangle d\mu_g + \int_{M} \langle T_F^{[p+1]} d \omega, d \omega \rangle d\mu_g+ \int_{M} \langle T_F^{[p-1]} \delta \omega, \delta \omega \rangle d\mu_g. \end{eqnarray}

We first control the boundary terms: 
\begin{align*}
    &\underbrace{-\frac{1}{2}\int_{\partial M} (|d \omega|^2 + |\delta \omega|^2) \langle F,\nu  \rangle d\mu_g}_{A} 
    + \underbrace{\int_{\partial M} \langle F \wedge i^* (\delta \omega), \nu \lrcorner d \omega \rangle d\mu_g}_{B} \\
    &\quad + \underbrace{\int_{\partial M} \langle i^* (F \lrcorner d \omega), \nu \lrcorner d \omega \rangle d\mu_g}_{C} 
    + \underbrace{\int_{\partial M} \langle i^* (F \lrcorner \delta \omega), \nu \lrcorner \delta \omega \rangle d\mu_g}_{D}.
\end{align*}

\begin{enumerate}
        \item [\underline{A:}] Since $\w\in E_k$, we have $\nu\lrcorner d\w=\iota^*\nabla_\nu\w$ and $\iota^*\delta\w=-\nu\lrcorner\nabla_\nu\w=0.$ Thus, $\lVert\nu\lrcorner d\w\rVert^2_{L^2(\partial M)}=\lVert\nabla_\nu\w\rVert^2_{L^2(\partial M)}.$ We also have $\lvert d\w\rvert^2=\lvert\nu\lrcorner d\w\rvert^2=\lvert\iota^*(\nabla_\nu\w)\rvert^2$. Now by \cite{RaulotSavo}, we have $\nu\lrcorner\delta\w=-\delta(\nu\lrcorner\w)$, hence, \[\lvert\delta\w\rvert^2=\lvert\nu\lrcorner\delta\w\rvert^2+\lvert\iota^*\delta\w\rvert^2=\lvert-\delta(\underbrace{\nu\lrcorner\w}_{0})\rvert^2+\underbrace{\lvert\iota^*\delta\w\rvert^2}_{0}=\lvert\nu\lrcorner\nabla_\nu\w\rvert^2=0.\] Thus, we finally get that A is equal to
 \begin{align}
     -\frac{1}{2}\int_{\partial M} |\nabla_\nu \omega|^2 \langle F,\nu  \rangle d\mu_g.\label{eq:A}
\end{align}
\item [\underline{B:}] Since $\iota^*\delta\w=0$, then the term B vanishes. 
 \item [\underline{C:}] We compute 
 $$\iota^*(F\lrcorner d\w)=\langle F,\nu\rangle \iota^*(\nu\lrcorner d\w)+\iota^*F\lrcorner \iota^* d\w=\langle F,\nu \rangle(\nu\lrcorner d\w)=\langle F,\nu\rangle\iota^*(\nabla_\nu\w).$$ This gives, \begin{align*}
    \int_{\partial M} \langle \iota^* (F \lrcorner d \omega), \nu \lrcorner d \omega \rangle d\mu_g 
    &= \int_{\partial M} \langle F , \nu  \rangle \lvert\iota^* (\nabla_\nu \w)\rvert^2  d\mu_g \\
    &= \int_{\partial M} \langle F , \nu  \rangle \big(\lvert\nabla_\nu \w\rvert^2 
       - \underbrace{\lvert\nu \lrcorner \nabla_\nu \w\rvert^2}_{0}\big) d\mu_g\\
       &= \int_{\partial M} \langle F , \nu  \rangle \lvert\nabla_\nu \w\rvert^2 
        d\mu_g.
\end{align*}
 \item [\underline{D:}] Since $\nu\lrcorner\delta\w=-\delta(\nu\lrcorner\w)=0$, the integral (D) vanishes. 
\end{enumerate}

Combining the results and the fact that $dF=0$, \eqref{eq:rellichdanslapreuveDeLinegalite} reduces to 
 \begin{align*}
-\underbrace{\int_{M} \langle \Delta \omega, F \lrcorner d \omega \rangle d\mu_g 
- \int_M \langle \delta \omega, F \lrcorner \Delta \omega \rangle d\mu_g}_{E} 
& + \underbrace{\frac{1}{2} \int_{M} (|d \omega|^2 + |\delta \omega|^2) \Delta\rho_{x_0} \, d\mu_g }_{F}
+ \underbrace{\int_{M} \langle T_F^{[p+1]} d \omega, d \omega \rangle \, d\mu_g }_{G}\notag \\
& + \underbrace{\int_{M} \langle T_F^{[p-1]} \delta \omega, \delta \omega \rangle \, d\mu_g}_{G}= - \frac{1}{2}\int_{\partial M} \lvert\nabla_\nu\w\rvert^2\langle F,\nu\rangle d\mu_g.
\end{align*}
Using the inequalities \begin{equation}\label{eq:fw<pfw}\lvert F\lrcorner\alpha\rvert\leq \lvert F\rvert \cdot \lvert \alpha\rvert \ \text{and}\ \lvert F\wedge\alpha\rvert\leq \lvert F\rvert \cdot \lvert \alpha\rvert,\end{equation} that are valid for any form $\alpha,$ we estimate the remaining integrals separately using Hölder inequality and taking $F=\nabla\rho_{x_0}$.
We begin with 
 \begin{enumerate}
        \item [\underline{E:}] \begin{align*}
&-\int_{M} \langle \Delta \omega, F \lrcorner d \omega \rangle d\mu_g 
 - \int_M \langle \delta \omega, F \lrcorner \Delta \omega \rangle d\mu_g \\ 
&= - \int_M \langle \Delta \omega, F \lrcorner d \omega + F \wedge \delta \omega \rangle d\mu_g \notag \\
&\leq \int_M \lvert \Delta \omega \rvert \cdot \lvert F \lrcorner d \omega + F \wedge \delta \omega \rvert d\mu_g \notag \\
&\leq \left( \int_M \lvert \Delta \omega \rvert^2 d\mu_g \right)^{\frac{1}{2}} 
\left( \int_M \left(\lvert F \lrcorner d \omega + F \wedge \delta \omega \rvert^2\right) d\mu_g \right)^{\frac{1}{2}} \notag \\
&\leq  \left( \int_M \lvert \Delta \omega \rvert^2 d\mu_g \right)^{\frac{1}{2}} 
\left( \int_M \bigl(\lvert F \lrcorner d \omega \rvert^2 + \lvert F \wedge \delta \omega \rvert^2+ 2\underbrace{\langle F\lrcorner d\omega,F\wedge\delta\omega\rangle}_{0}\bigr) d\mu_g \right)^{\frac{1}{2}} \notag \\
&\overset{\eqref{eq:fw<pfw}}{\leq} \left( \int_M \lvert \Delta \omega \rvert^2 d\mu_g \right)^{\frac{1}{2}} 
\left( \int_M (\lvert F \rvert^2 \lvert d \omega \rvert^2 + \lvert F \rvert^2 \lvert \delta \omega \rvert^2)d\mu_g \right)^{\frac{1}{2}} \notag \\
&\leq r_{\max} \left( \int_M \lvert \Delta \omega \rvert^2 d\mu_g \right)^{\frac{1}{2}} 
\left( \int_M (\lvert d \omega \rvert^2 + \lvert \delta \omega \rvert^2) d\mu_g \right)^{\frac{1}{2}} \notag \\
&\overset{\eqref{eq:kvpBSD2}}{\leq}\mathbf{q}_{k,p} ^{\frac{1}{2}} r_{\max}
\left( \int_{\partial M} \lvert \nabla_\nu \omega \rvert^2 d\mu_g \right)^{\frac{1}{2}} \,  \left( \int_M (\lvert d \omega \rvert^2 + \lvert \delta \omega \rvert^2) d\mu_g \right)^{\frac{1}{2}}.
\end{align*}

\item [\underline{F:}] Using again \thref{thm:laplace}, we find\begin{align*}
&\frac{1}{2} \int_{ M} (|d \omega|^2 + |\delta \omega|^2) \Delta\rho_{x_0} \, d\mu_g \notag \leq -\frac{1}{2} \int_{ M} (|d \omega|^2 + |\delta \omega|^2) (1 + d_{x_0}.H_{\kappa_2}(d_{x_0})) \, d\mu_g . \notag 
\end{align*}

\item [\underline{G:}] Using \eqref{eq:encadrementBetta}, we have
\begin{equation*}
    \int_M\langle T_F^{[p+1]}d\w,d\w\rangle d\mu_g\leq \beta_{\max,p+1}\int_M\lvert d\w\rvert ^2 d\mu_g
\end{equation*}
and \begin{equation*}
    \int_M\langle T_F^{[p-1]}\delta\w,\delta\w\rangle d\mu_g\leq \beta_{\max,p-1}\int_M\lvert \delta\w\rvert ^2 d\mu_g
\end{equation*} thus
\begin{align*}
  \int_M \langle T_F^{[p+1]} d\omega, d\omega \rangle d\mu_g +  \int_M \langle T_F^{[p-1]} \delta\omega, \delta\omega \rangle d\mu_g & \leq \underset{M}{\max}(\beta_{\max,p+1};\beta_{\max,p-1}) \\
  & \quad \times \int_{M} (|d\omega|^2 + |\delta \omega|^2) d\mu_g.
\end{align*}

        \end{enumerate}

    Therefore, by regrouping the terms we finally get, \begin{align*}
            -\frac{1}{2}\int_{\partial M}\lvert\nabla_\nu\w\rvert^2\langle\nabla\rho_{x_0},\nu\rangle d\mu_g &\leq \mathbf{q}_{k,p} ^\frac{1}{2}r_{\max}\left( \int_{\partial M} \lvert \nabla_\nu \omega \rvert^2 d\mu_g \right)^{\frac{1}{2}}\left( \int_M (\lvert d \omega \rvert^2+ \lvert \delta \omega \rvert^2) d\mu_g \right)^{\frac{1}{2}} \\
        &+\left(\underset{M}{\max}(\beta_{\max,p+1};\beta_{\max,p-1})-\frac{1}{2}(1+\underset{M}{\min} (d_{x_0}.H_{\kappa_2}(d_{x_0})))\right)\\
        &\times\int_{ M} (|d \omega|^2 + |\delta \omega|^2)d\mu_g\\
        &\overset{\eqref{eq:beta}}{\leq} \mathbf{q}_{k,p} ^\frac{1}{2}r_{\max}\left( \int_{\partial M} \lvert \nabla_\nu \omega \rvert^2 d\mu_g \right)^{\frac{1}{2}}\left( \int_M (\lvert d \omega \rvert^2+ \lvert \delta \omega \rvert^2) d\mu_g \right)^{\frac{1}{2}} \\
        &+\left(\frac{p+1}{n-1}\underset{M}{\max}\ ({d_{x_0}.H_{\kappa_1}(d_{x_0})})-\frac{1}{2}(1+\underset{M}{\min}(d_{x_0}.H_{\kappa_2}(d_{x_0})))\right)\\
        &\times \int_{ M} (|d \omega|^2 + |\delta \omega|^2)d\mu_g. \end{align*}

This implies that \begin{align*} 
h_{\min}\int_{\partial M}\lvert\nabla_\nu\w\rvert^2 d\mu_g &\leq 2\mathbf{q}_{k,p} ^\frac{1}{2}r_{\max}\left( \int_{\partial M} \lvert \nabla_\nu \omega \rvert^2 d\mu_g \right)^{\frac{1}{2}}\left( \int_M (\lvert d \omega \rvert^2 + \lvert \delta \omega \rvert^2) d\mu_g \right)^{\frac{1}{2}}\\
&+\underbrace{\left(\frac{2(p+1)}{n-1}\underset{M}{\max} \ ({d_{x_0}.H_{\kappa_1}(d_{x_0}))}-(1+\underset{M}{\min} (d_{x_0}.H_{\kappa_2}(d_{x_0})))\right)}_{C_2}\int_{M} (|d \omega|^2 + |\delta \omega|^2)d\mu_g.\end{align*}

    Now let $\mathcal{A}^2=\frac{\int_{\partial M}\lvert\nabla_\nu\w\rvert^2 d\mu_g}{ \int_M (\lvert d\w\rvert^2 +\lvert \delta\w\rvert ^2) d\mu_g}$. Dividing by $\int_M (\lvert d\w\rvert^2 +\lvert \delta\w\rvert ^2) d\mu_g,$ we obtain the second-degree polynomial in $\mathcal{A}$ : \[h_{\min}\mathcal{A}^2- 2 \mathbf{q}_{k,p} ^{\frac{1}{2}}r_{\max}\mathcal{A}-C_2\leq 0.\] 

   By calculating the discriminant and then the bigger root of this polynomial, we obtain that
 \begin{align*}
\mathcal{A}^2 
&\leq \frac{\left( \mathbf{q}_{k,p} ^{\frac{1}{2}} r_{\max} +  \sqrt{\mathbf{q}_{k,p} r_{\max}^2 + h_{\min} C_2} \right)^2}{h_{\min}^2} \\
&\leq \frac{2\mathbf{q}_{k,p} r^2_{\max}+2\mathbf{q}_{k,p} r^2_{\max}+2h_{\min} C_2}{h^2_{\min}}=\frac{4\mathbf{q}_{k,p} r^2_{\max}+2h_{\min}C_2}{h^2_{\min}}.
\end{align*}
Replacing the result in \eqref{eq:lamda<qsup}, we finally get \begin{equation}\label{eq:equationfinale}
\lambda_{k,p} \leq \frac{4\mathbf{q}^2_{k,p} r^2_{\max}+2\mathbf{q}_{k,p} h_{\min} C_2}{h^2_{\min}}. \end{equation}
This completes the proof.

\end{proof} 
\begin{remark}
As $\mathbf{q}_{k,p}$ goes to infinity when $k$ goes to infinity, the upper bound in \eqref{eq:equationfinale} is positive regardless the sign of $C_2$. We know that the limit of  ${r_{\max}H_{\kappa}(r_{\max})}$ when $ r_{\max} $ goes to zero is ${n-1}$ hence for $p>\frac{n}{2}-1$, the limit of $ C_2$ when $ r_{\max} $ goes to zero becomes $2(p+1)-n$ which is positive. The function $rH_\kappa(r )$ is constant if $\kappa = 0$, increasing on $[0,\infty)$ if $\kappa < 0$, and decreasing on $[0,\infty)$ if $\kappa> 0.$\\ Here are some values of $C_2$ depending on the variation of the sign of $\kappa_1$ and $\kappa_2$.
\begin{enumerate}
    \item[$\bullet$] \underline{$\kappa_1 = \kappa_2 = 0$ \ \text{et}\ $p \geq 1$ : } $C_2=2(p+1)-n$. This includes the case when $M$ is a domain in $\mathbb{R}^n.$
    \item[$\bullet$] \underline{$\kappa_1\leq \kappa_2 \leq 0$ : } $C_2=\frac{2(p+1)}{n-1}r_{\max}{H_{\kappa_1}(r_{\max})}-n.$
    \item[$\bullet$] \underline{$  0\leq\kappa_1\leq \kappa_2$ :} $C_2=2(p+1)-(1+r_{\max}H_{\kappa_2}(r_{\max})).$
    \item[$\bullet$] \underline{$  \kappa_1\leq 0\leq \kappa_2$ :} $C_2=\frac{2(p+1)}{n-1} r_{\max}{H_{\kappa_1}(r_{\max})}-(1+r_{\max}H_{\kappa_2}(r_{\max}))$.
\end{enumerate}
\end{remark}
\begin{remark}
   We tried to prove \eqref{eq:th1.4(i,1)} using the same idea as in the proof of 
\eqref{eq:th1.4(i,2)}, applying the Rellich identity, but at the end we obtain 
a negative constant $C_1$ of the form
\[
C_1 = 
\left(1+\frac{2(p-1)}{n-1}\right)
\underset{r\in[0,r_{\max})}{\min}\; d_{x_0} \, H_{\kappa_2}(d_{x_0})
\;-\;
\left(1+\frac{2p}{n-1}\right)
\underset{r\in[0,r_{\max})}{\max}\; d_{x_0} \, H_{\kappa_1}(d_{x_0}).
\] Using this technique, it seems that the first inequality of \thref{thm:ineqScalaire2} could not be extended to differential forms of degree $p\geq1$.

\end{remark}

\bibliographystyle{abbrv}
\bibliography{references}

\end{document}